\newtheorem{theorem}{Theorem}[section]
 \newtheorem{corollary}[theorem]{Corollary}
\newtheorem{lemma}[theorem]{Lemma}
 \newtheorem{proposition}[theorem]{Proposition}
 \theoremstyle{definition}
 \theoremstyle{remark}
 \newtheorem{remark}[theorem]{Remark}
 \newtheorem*{example}{Example}
 \numberwithin{equation}{section}
\DeclareMathOperator{\Cdb}{{\mathbb C}}
\DeclareMathOperator{\Rdb}{{\mathbb R}}
\DeclareMathOperator{\Ddb}{{\mathbb D}}
\DeclareMathOperator{\Tdb}{{\mathbb T}}
\DeclareMathOperator{\Ndb}{{\mathbb N}}
\DeclareMathOperator{\A}{{\mathcal A}}
\DeclareMathOperator{\B}{{\mathcal B}}
\begin{document}

\title[Real positive maps on operator algebras]{Real positive maps and conditional expectations on operator algebras}

\author{David P. Blecher}
\address{Department of Mathematics \\ University of Houston \\ Houston, TX
77204-3008} \email[David P. Blecher]{dblecher@math.uh.edu}
\subjclass{Primary 17C65, 46L05, 46L51, 46L70, 47L05, 47L07, 47L30, 47L70; Secondary 46H10, 46B40, 46L07, 46L30, 46L52, 47L75}

\keywords{Operator algebra, Jordan operator algebra, contractive projection, conditional expectation,  real positive, completely positive, noncommutative Banach– Stone theorem, JC*-algebra}

\date{Revision of September 1, 2020}
\thanks{*Blecher was partially supported by a Simons Foundation Collaborative grant
527078.}

\dedicatory{Dedicated to the memory of  E. G. Effros and 
 Coenraad Labuschagne} 
\begin{abstract} Most  of this article is an  expanded version of our talk at the Positivity X conference.   It is essentially a survey, but some part, like most of 
the lengthy Section 
\ref{cp}, is comprised of new results whose proofs are unpublished elsewhere.
We begin by reviewing the theory of real positivity of operator algebras
initiated by the author  
and Charles Read.  Then we present several
 new general results (mostly joint work with Matthew Neal)
about real positive maps.
The key point is that real positivity is often the right replacement in a general algebra $A$ for positivity
in $C^*$-algebras.  We then apply this to studying contractive projections (`conditional expectations') 
and isometries of operator algebras.
  For example we  generalize and find variants of certain classical results on positive projections
 on $C^*$-algebras and JB algebras due
to Choi, Effros, St{\o}rmer, Friedman and Russo, and others. In previous work with Neal we had done
the `completely contractive' case; we focus here on describing the 
real positive contractive case  from recent work with Neal.    
We also prove here several new and complementary results on this topic due to the author, indeed this new 
work constitutes most of Section \ref{cp}.     Finally, in the last section we describe 
a related part of some recent  joint work with Labuschagne on what we consider to be a good noncommutative
generalization of the `characters' (i.e. homomorphisms into 
the scalars) on an algebra.   Such characters are a special case 
of the projections mentioned above, and are shown to be intimately related to conditional 
expectations.   The idea is to try to use these
to  generalize certain classical function algebra results 
involving characters.  
\end{abstract}

\maketitle

%\bigskip
%\begin{verbatim}\documentclass{birkjour}\end{verbatim}

\section{Introduction: Real positivity}

Positivity plays a key role in physics, in fact one could say 
it is intrinsic to the structure of the (quantum) universe.  
 For lack of a 
better name, we shall use the term {\em quantum positivity}
to refer to the positivity found in the `standard model' of quantum mechanics,
that is, positivity for operators on 
a 
complex Hilbert space $H$, or positivity in algebraic systems comprised of Hilbert
space operators. It is indeed absolutely
fundamental and pervasive in quantum physics, modern analysis, noncommutative
geometry, and related fields.
The associated order on selfadjoint operators is sometimes
called the {\em L\"owner order}: $$S \leq T \; \; \; \; \; \; \; \;  \textrm{if and only if}  \; \; \; \; \; \; \; \; \langle S \zeta ,  \zeta  \rangle \leq \langle T \zeta ,  \zeta  \rangle  \;   \textrm{for all} \;   \zeta \in H.$$
There are several other well known characterizations of the positive cone 
in this order, i.e.\ for $T \geq 0$, for example in terms of the 
spectrum, or the numerical range, or in terms of a metric inequality, 
or an algebraic identity ($T = S^* S$, or $T = R^2$ where $R = R^*$), etc.  Here the $*$ is the usual adjoint on $B(H)$. 

The latter characterizations all make sense in any {\em $C^*$-algebra},
that is, a selfadjoint  
(that is, closed under the adjoint operation $*$)
norm closed subalgebra of the bounded
linear operators on $H$, or, more
 abstractly, a Banach $*$-algebra $A$ satisfying the $C^*$-identity 
$\| x^* x \| = \| x \|^2$.  We recall that a {\em von Neumann algebra} is a weak* closed $C^*$-algebra, or  abstractly a $C^*$-algebra with a Banach space predual.
These two classes of algebras are typically regarded as, respectively,
{\em noncommutative topology} and {\em noncommutative measure theory}.
Simplistically one could say that these are the kind of  noncommutative topology and measure theory needed for quantum physics.
The positivity and order above are one of the main ingredients of the vast theory of  $C^*$-algebras and von Neumann algebras.
There is a sense in which,
explicitly or implicitly, `quantum positivity'  underlies almost every proof in
$C^*$-algebra theory (see for example the texts
\cite{Blbook,P,Tak}).  

In an ongoing 
program (see e.g.\ \cite{BRI, BRII, BRord, BBS,BNp,Bsan,BOZ,Bsan,BPhilI,BWj,BNj,BT}), we have been importing some of this 
vast panorama of $C^*$-algebraic  positivity for use in more general algebras (Banach algebras, nonselfadjoint
operator algebras, Jordan operator algebras, etc).  The usual theory of `quantum positivity'  in operator algebras  is 
so spectacular and powerful it makes sense to 
make many of these tools available elsewhere.
To do this we use {\em real positivity} systematically.   
The main goal of the present paper is to describe some of the very recent updates in this program.
We do not prove many of the results stated here, just those that are not proved elsewhere. 
For example, most of Section \ref{cp} is proved here for the first time. 

In the present article, $H, K$ will denote Hilbert spaces over the
complex field. We write $B(H)$ for the algebra of bounded linear
operators $T : H \to H$. This is just the $n \times n$ matrix
algebra $M_n$ if $H$ is finite dimensional.
An operator $T \in B(H)$ is  {\em  real positive} (or  {\em  accretive}) if  Re $T = (T + T^*)/2 \geq 0$.
Again there are several other equivalent characterizations of real positive operators,
for example that the numerical range lies in the closed right half plane,
or they may be characterized by a metric inequality, 
or an algebraic identity, etc.  See e.g.\  \cite[Lemma 2.4]{Bsan}.
  The latter characterizations all make sense in any unital Banach algebra $A$ (by unital we mean that it 
  has an identity element of norm 1).
We write ${\mathfrak r}_A$ for the `cone' of real positive $T \in A$.
In \cite{BOZ,Bsan,BPhilI, BPhilII} we study 
real positivity in Banach algebras and $L^p$-operator algebras.  In \cite{BT} (written after the present survey was submitted)
we consider real positivity in real operator algebras, and real positive real linear maps.
We could have also reported here on the last three cited papers, in the spirit of recent updates and work in progress.  
However for the sake of not becoming too dispersed, in
 the present paper all of our algebras will  be {\em operator algebras}
or {\em Jordan operator algebras}.  

For us an operator algebra is  a norm-closed associative (but  
not necessarily selfadjoint) subalgebra of $B(H)$.  These were characterized abstractly in \cite{BRS}, and much of 
their general theory may be found in \cite{BLM}.
A   Jordan operator algebra is a  norm-closed linear subspace $A \subset B(H)$ 
which is closed under the
`Jordan product' $a \circ b = \frac{1}{2}(ab+ba)$ (or equivalently, with  $a^2 \in A$ for all $a \in A$).  The theory of Jordan operator algebras in this
sense is quite recent, and may be found in \cite{BWj,BNj,BWj2,ZWthes,BNjp}.  
There is a much older theory of {\em Jordan $C^*$-algebras} (also called $JC^*$-{\em algebras}).
These are the Jordan operator algebras  $A \subset B(H)$ which are also closed under the involution of $B(H)$.  
Indeed $JC^*$-algebras are historically essentially amongst the first examples of `operator algebras'. 
An  Annals paper of Jordan, von Neumann and Wigner from 1934 on these nonassociative algebras 
 \cite{JVW} begins with the line
 ``One of us has shown that the statistical properties of the measurements of a quantum mechanical system assume their simplest form when expressed in terms of a certain hypercomplex algebra which is commutative but not associative".
 Their hope was that such algebras ``would form a suitable starting point for a generalization of the present quantum mechanical theory".
 These days $JC^*$-algebras are often viewed within the larger theory of $JB^*$-algebras \cite{HS,Rod,Rod2}.   We will view them 
 within the class of Jordan operator algebras defined above. 

Of course every operator algebra is a  Jordan operator algebra.
The latter algebras turn out to be the correct most general setting for many of the results below.
Thus we state such results for Jordan operator algebras; 
the reader who does not care about nonassociative algebras should simply
restrict to the associative case.    Indeed the statement of many of our results 
contain the phrase `(Jordan) operator algebra', this invites the reader to simply ignore the word `Jordan'. 
 However a few of our new results do apply only to (associative) operator algebras.  

  The main principle for us is that real positivity is often  the right replacement in a general algebra $A$ for positivity in $C^*$-algebras.   To be honest, 
  there are many $C^*$-subtheories or results 
in  which this approach does not work.  We focus here on some of  the many settings where it does give
something interesting and behaves well.       Another main subtheme is that of `conditional expectation', as we shall see below together
with the relations between this theme and real positivity.

Turning to the structure of our paper, in Section 2 we begin by recalling 
the basics of real positivity  from our work with Charles Read (actually most results are stated in the 
later and more general setting from \cite{BWj}).  In Section 3 we survey some new and foundational 
results from \cite{BNjp} concerning  real positive maps, generalizing
some  aspects of the basic theory of positive maps on $C^*$-algebras \cite{ST}.
Considering such maps, as opposed to the `completely positive' or 
`completely contractive' case (terms defined below), forces
one into the more general  setting of Jordan operator algebras.
For example, the range of a positive projection (i.e.\ idempotent linear transformation) 
on a $C^*$-algebra need not be again be isomorphic to a $C^*$-algebra
(consider $\frac{1}{2}(x + x^T)$ on $M_2$), but it is always a Jordan operator algebra.     Also,  as one sees already in
Kadison's Banach-Stone  theorem for
$C^*$-algebras \cite{Kad}, isometries of $C^*$-algebras relate to Jordan $*$-homomorphisms and not necessarily to
$*$-homomorphisms.  We recall that a 
Jordan algebra homomorphism is a map satisfying $T(a \circ b) = T(a) \circ T(b)$
(or equivalently, with  $T(a^2) = T(a)^2$) for all $a, b \in A$.  

Indeed in Section \ref{qbs} 
 we also describe a new 
Banach-Stone type theorem from \cite{BNjp} for nonunital isometries between 
  Jordan operator algebras;  characterizing 
 such isometries in the spirit of Kadison's Banach-Stone  theorem mentioned above. 
This result is 
needed in 
the proof of a theorem stated towards the end of Section \ref{cp}, the characterization of symmetric real positive projections.
 It  requires 
an analysis of `quasi-multipliers' of (Jordan) operator algebras, a nontrivial link between the latter and quasi-multipliers of any 
generated $C^*$-algebra, 
a little known $C^*$-algebra theorem about quasi-multipliers due to Akemann and Pedersen,
and some theory of Jordan multiplier algebras.   In Section \ref{cp} we study real positive conditional expectations, and contractive (i.e.\ norm $\leq 1$)
and bicontractive 
 projections, on operator  algebras or 
Jordan operator  algebras.    
In earlier work \cite{BNp} we had considered completely contractive and completely bicontractive projections; the focus in Section \ref{cp}
 is how much of this is still true with the word `completely' removed.    Some of the main questions here 
concern generalization of famous results of Tomiyama and Choi and Effros on a $C^*$-algebra $A$:
The range of a positive contractive projection  $P$ from $A$ onto a $C^*$-subalgebra is a {\em conditional expectation},
by which we mean that $P(a P(b)) = P(a) P(b) = P(P(a) b)$ for all $a, b \in A$.    
(We discuss why these are called conditional expectations in Section \ref{wace}, and also review 
some aspects of the history and theory of classical (probabilistic) conditional expectations there.)
And even if $P(A)$ is not a subalgebra, it is still
a  $C^*$-algebra in the canonical  product $P(ab)$ defined by $P$ (assuming $P$ is completely positive, otherwise 
we are in the Jordan situation mentioned in the last paragraph), and with respect to 
this product $P$ is still a `conditional expectation'.   This is the Choi and Effros result,
and the latter product is often called the Choi-Effros product or $P$-product.  Pioneering results about contractive
projections on JB*-algebras may be found in \cite[Theorem
2.21]{R83}  and \cite[Corollary 1]{K84}.  In Section \ref{cp} we consider similar questions for
 a real positive contractive projection $P$ on an operator algebra or  Jordan operator algebra: is it a conditional expectation if the range is a
 subalgebra or Jordan subalgebra, in the general case 
 is $P(A)$  again an operator algebra or  Jordan operator algebra in the canonical  product defined by $P$,
 and is $P$ a `conditional expectation'?   
 Some of Section \ref{cp} 
surveys a selection of some results from Sections 4--6 of \cite{BNjp}.
However, as we said earlier, most  of Section \ref{cp}  of the present paper 
consists of new results and proofs that do not appear elsewhere.    

 In Section \ref{wace} we discuss
the concept and history of `conditional expectation', and review very briefly 
some features of the important but difficult 
theory of normal conditional expectations of von Neumann algebras.
In the last section we describe 
some joint work with Labuschagne on what we consider to be a good noncommutative
generalization of the `characters' (i.e.\ homomorphisms into 
the scalars) on an algebra.   Such characters are a special case 
of the projections mentioned above.   The idea is to try to use these
to  generalize certain classical function algebra results 
involving characters.   We focus here on some aspects of this work that relate to material and results
from earlier in the present article.   For example, we generalize the von Neumann algebraic setting of conditional expectations, which involves 
an inclusion $D \subset M$ of von Neumann algebras, to inclusions $D \subset A \subset M$ for a
weak* closed subalgebra $A$.   This is a setting where one can find a positive  (in the usual sense) 
extension of a real positive map which does not increase  the range of the map.
(The example to bear in mind is from Hardy space theory:
$\Cdb 1 \subset H^\infty(\Ddb) \subset L^\infty(\Tdb)$.)

\section{Some general results on real positivity}  \label{sgr} 
For operator algebras or Jordan operator algebras the definition of `real positive' (i.e.\  $T + T^* \geq 0$) does not depend of the particular representation of the algebras.
Indeed as we said earlier there are nice equivalent definitions of `accretive' which
 make this point clear.
For example, an element $x$  is real positive if 
and only if $\Vert 1 - tx \Vert \leq 1 + t^2 \Vert x \Vert^2$  for all $t > 0$.
Here, the $1$ above is well-defined,  for every nonunital nonselfadjoint
(even Jordan) operator algebra $A$.  This is essentially `Meyers theorem': in \cite[Section 2.2]{BWj}
we established using a result of Meyer 
 that every Jordan operator algebra $A$ has a  unitization $A^1$
which is unique up
to isometric Jordan homomorphism.

It is useful that the `real positive cone' ${\mathfrak r}_A$ above has a stronger subcone $\Rdb_+ {\mathfrak F}_A$, where
 ${\mathfrak F}_A = \{ a \in A : \Vert 1 - a \Vert \leq 1 \}$.
This is a proper cone (in the sense that $0$ is the only element $x \in A$ with
both $x$ and $-x$ in this cone). 
Again, the $1$ here is well-defined,  as we explained above.

In \cite{BRI,BRII,BRord} Charles Read and the author began using and 
systematically developing  real positivity in operator algebras, proving results
like the following.  We will however state the more general analogous results for Jordan operator algebras
from \cite{BWj}.   Thus throughout the rest of this section $A$ is a Jordan operator algebra.

\begin{proposition} \label{2co} ${\mathfrak r}_A = \overline{ \Rdb_+ {\mathfrak F}_A}$.  
\end{proposition}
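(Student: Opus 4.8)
The plan is to establish the two inclusions separately; $\overline{\Rdb_+ {\mathfrak F}_A} \subseteq {\mathfrak r}_A$ is routine, and the reverse inclusion carries the content. Throughout I would fix a unital isometric representation $A^1 \subseteq B(H)$, so that $1$, ${\mathfrak F}_A$, and real positivity may all be computed there (the representation-independence of real positivity, and the existence of $A^1$ via Meyer's theorem, were recalled above).

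For $\overline{\Rdb_+ {\mathfrak F}_A} \subseteq {\mathfrak r}_A$, first note that ${\mathfrak r}_A$ is a norm-closed cone, since the condition $a + a^* \ge 0$ in $B(H)$ is preserved by multiplication by nonnegative scalars and by norm limits. Hence it suffices to check ${\mathfrak F}_A \subseteq {\mathfrak r}_A$. If $a \in {\mathfrak F}_A$, then for every unit vector $\xi \in H$ we have $|1 - \langle a\xi,\xi\rangle| = |\langle (1-a)\xi,\xi\rangle| \le \|1-a\| \le 1$, so $\langle a\xi,\xi\rangle$ lies in the closed disc of radius $1$ centered at $1$, which is contained in the closed right half-plane; thus $\langle \operatorname{Re}(a)\xi,\xi\rangle = \operatorname{Re}\langle a\xi,\xi\rangle \ge 0$ for all $\xi$, i.e.\ $a \in {\mathfrak r}_A$.

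For ${\mathfrak r}_A \subseteq \overline{\Rdb_+ {\mathfrak F}_A}$, the idea is to approximate a given $a \in {\mathfrak r}_A$ by resolvent-smoothed elements. We may assume $a \ne 0$, since $0 \in {\mathfrak F}_A$. For $0 < t < 1/\|a\|$ set $b_t = \sum_{n \ge 1} (-t)^{n-1} a^n$; this series converges in norm, and all powers $a^n$ lie in $A$ because the Jordan subalgebra generated by a single element is associative, so closure of $A$ under $x \mapsto x^2$ forces closure under all powers. Hence $b_t \in A$. Computing in $B(H)$, $1 - t b_t = \sum_{n \ge 0} (-ta)^n = (1 + ta)^{-1}$. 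Since $\operatorname{Re}(1 + ta) = 1 + t\operatorname{Re}(a) \ge 1$, the standard accretive resolvent bound gives $\|(1+ta)^{-1}\| \le 1$: for any unit vector $\xi$ one has $\|(1+ta)\xi\| \ge \operatorname{Re}\langle (1+ta)\xi,\xi\rangle \ge 1$, and likewise with the adjoint, so $1 + ta$ is invertible in $B(H)$ with contractive inverse. Therefore $\|1 - t b_t\| \le 1$, i.e.\ $t b_t \in {\mathfrak F}_A$, so $b_t \in \Rdb_+ {\mathfrak F}_A$. Finally $\|b_t - a\| \le \sum_{n \ge 2} t^{n-1}\|a\|^n = t\|a\|^2 / (1 - t\|a\|) \to 0$ as $t \to 0^+$, so $a \in \overline{\Rdb_+ {\mathfrak F}_A}$.

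The step deserving the most care is that everything in the second inclusion must live inside $A$ itself, which is why I would expand $b_t$ as a power series in $a$ rather than writing $a(1+ta)^{-1}$ directly; this makes the argument cover the nonunital case without any passage to a generated $C^*$-algebra. The only ingredients beyond elementary Hilbert space operator theory are the existence of the unitization (so that $1$ and ${\mathfrak F}_A$ make sense), power-associativity in the Jordan setting, and the accretive resolvent estimate. One could instead run the second inclusion off the metric characterization of accretivity recalled in the Introduction, but the resolvent route appears cleaner and more self-contained.
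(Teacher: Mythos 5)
Your proof is correct, and it is essentially the argument behind this result in the sources the paper cites for it (the paper itself states Proposition \ref{2co} without proof, deferring to \cite{BRI,BRord,BWj}): the easy inclusion via the numerical range of elements of ${\mathfrak F}_A$, and the hard inclusion via the resolvent smoothing $a \mapsto a(1+ta)^{-1}$ with $t \to 0^+$, realized as a power series in $a$ so that it stays in $A$ in the nonunital and Jordan cases. Your care about power-associativity and about avoiding a passage to a generated $C^*$-algebra is exactly the right care to take.
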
  

Sometimes  one first has to prove a positivity result using ${\mathfrak F}_A$, and then
use the last Proposition to generalize to all real positive elements.   

We write {\em cai} for short for an  approximate identity which is  contractive (that is, of norm $\leq 1$).
A {\em Jordan contractive approximate identity}  for a Jordan 
operator algebra   is a net
$(e_t)$  of contractions with $e_t \circ a \to a$ for all $a \in A$.    If $A$ is an associative operator algebra
then one can show that the existence of a Jordan cai implies existence of a (two-sided) cai.

\begin{theorem} \label{cairp}   $A$ has a  (Jordan) cai if and only if $A$ has a real positive (Jordan) cai, and 
 if and only if the real positive elements in $A$ span $A$.   
\end{theorem}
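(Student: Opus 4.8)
\emph{Plan.} The three conditions are equivalent, and I would argue cyclically. That a real positive (Jordan) cai is in particular a (Jordan) cai is trivial, so the content lies in the implications ``(Jordan) cai $\Rightarrow$ $\mathfrak{r}_A$ spans $A$'' and ``$\mathfrak{r}_A$ spans $A$ $\Rightarrow$ $A$ has a real positive (Jordan) cai''.

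The first of these I would handle by passing to the bidual. If $A$ has a (Jordan) cai $(e_t)$, then a weak* cluster point $e$ of $(e_t)$ in $A^{**}$ is a Jordan identity for the dual Jordan operator algebra $A^{**}$, by separate weak* continuity of the Jordan product; so $A^{**}$ is unital. Representing $A^{**}$ faithfully with $e$ acting as a projection $p$, one checks for $x \in A^{**}$ with $\|x\| \le 1$ that $e \pm x$ and $e \pm ix$ all lie in $\mathfrak{r}_{A^{**}}$ (e.g.\ $(e+x) + (e+x)^{*} = 2p + (x+x^{*}) \ge 2p - \|x + x^{*}\|\,p \ge 0$), and $x = \tfrac{1}{2}[(e+x) - (e-x)]$ writes $x$ as a combination of real positive elements; hence $\mathfrak{r}_{A^{**}}$ linearly spans $A^{**}$. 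I would then invoke the fact (in the spirit of a Kaplansky density theorem, provable from Proposition \ref{2co} together with Goldstine and a separation argument) that $\mathfrak{r}_A$ is weak* dense in $\mathfrak{r}_{A^{**}}$. It follows that $\operatorname{span} \mathfrak{r}_A$ is weak* dense in $A^{**}$; since $\overline{\operatorname{span} \mathfrak{r}_A}$ is a norm closed subspace of $A$ whose bipolar is all of $A^{**}$, it must equal $A$.

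For the reverse implication, suppose $\mathfrak{r}_A$ spans $A$; by Proposition \ref{2co} this is the same as $\overline{\operatorname{span}}\, \mathfrak{F}_A = A$. The goal is again to make $A^{**}$ unital, after which one descends to an actual cai. Here $A^{**}$ sits as a weak* closed Jordan ideal of codimension one in the unital algebra $B := (A^1)^{**}$, so $B = A^{**} \oplus \mathbb{C}1_B$ and there is a norm-one weak* continuous character $\chi$ on $B$ with kernel $A^{**}$; the point is that the spanning hypothesis forces $\chi$ to ``split off an idempotent'', equivalently forces $A^{**}$ to have an identity. I would establish this using support idempotents: for $a \in \mathfrak{F}_A$ the resolvent-type elements $a(\tfrac1n + a)^{-1}$ again lie in $\mathfrak{F}_A$ and converge weak* to an idempotent $s(a) \in A^{**}$ acting as a local identity on $a$, and then exploit that $\mathfrak{F}_A$ is convex and star-shaped at $0$ to amalgamate finitely many such local identities; this is essentially Read's construction of a cai. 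Once $A^{**}$ is unital with identity $e$, the Kaplansky-type density statement ``$\mathfrak{F}_A$ is weak* dense in $\mathfrak{F}_{A^{**}}$'' lets one write $e$ as a weak* limit of a net in the convex set $\mathfrak{F}_A$; since $f \mapsto f \circ a$ is weak-weak continuous, $f \circ a \to a$ weakly for all $a \in A$, and a standard convex-combination argument (Mazur's theorem, indexed by the finite subsets of $A$) upgrades this to norm convergence, yielding a genuine cai lying in $\mathfrak{F}_A \subseteq \mathfrak{r}_A$ — a real positive cai.

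The main obstacle is the step in the last paragraph that ``$\mathfrak{r}_A$ spans $A$'' forces $A^{**}$ to be unital: unlike the $C^{*}$-case, a weak* closed (Jordan) ideal of an operator algebra need not have an identity, and ruling this out requires genuine work with support idempotents and the geometry of $\mathfrak{F}_A$. The subsidiary but also nontrivial input is the Kaplansky-type weak* density of $\mathfrak{F}_A$ in $\mathfrak{F}_{A^{**}}$ (whence also of $\mathfrak{r}_A$ in $\mathfrak{r}_{A^{**}}$, via Proposition \ref{2co}), which is what makes the descent from the bidual back to $A$ possible.
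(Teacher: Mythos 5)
The paper does not actually prove Theorem \ref{cairp}; it quotes it from \cite{BRI} and \cite[Theorem 4.1]{BWj}, and your outline is essentially the proof given in those sources: the cycle (real positive cai $\Rightarrow$ cai $\Rightarrow$ spanning $\Rightarrow$ real positive cai), the bidual identity $e$ realized as an orthogonal projection $p$ so that $e\pm x\in{\mathfrak r}_{A^{**}}$ for contractions $x$, the support idempotents $s(a)=\lim_n a(\tfrac1n+a)^{-1}$ amalgamated over the convex set ${\mathfrak F}_A$ to force $A^{**}$ to be unital, the Kaplansky-type weak* density of ${\mathfrak F}_A$ in ${\mathfrak F}_{A^{**}}$, and the Mazur convex-combination descent to a cai inside ${\mathfrak F}_A\subset{\mathfrak r}_A$. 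You have correctly isolated the two genuinely hard inputs (Read's support-idempotent construction and the weak* density lemma); these are the real content of the cited references and cannot be treated as routine, but flagging them as the load-bearing lemmas is the right assessment.

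One step should be tightened. Your implication ``cai $\Rightarrow$ spanning'' only yields $\overline{\operatorname{span}}\,{\mathfrak r}_A=A$, via the bipolar argument. If ``span'' is read algebraically --- and Theorem \ref{orderth}(5)--(6) of the paper, which asserts $A={\mathfrak r}_A-{\mathfrak r}_A$ with real positive \emph{contractions}, indicates the stronger statement is both true and intended --- then weak* density of ${\mathfrak r}_A$ in ${\mathfrak r}_{A^{**}}$ does not deliver it; one needs the order-theoretic cofinality machinery (Theorem \ref{orderth}, Corollary \ref{cofi}) to decompose a fixed element of the open ball as an explicit difference of real positive contractions. Your cycle of implications still closes, since your third implication only consumes the closed-span hypothesis, so the three-way equivalence survives under either reading; but as a proof of the literal assertion that the real positive elements span $A$ the bipolar step needs to be supplemented. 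A smaller point: in the bidual computation you need $a=pap$ for all $a\in A^{**}$, which in the Jordan case must be extracted from $e\circ a=a$ by the $2\times 2$ matrix argument relative to $p$; only then is $x+x^*$ supported on $p$ and the estimate $2p+(x+x^*)\geq 0$ valid.
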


In the Jordan algebra case this follows from \cite[Theorem 4.1]{BWj}.
The (earlier) operator algebra case is in \cite{BRI}; this result suggests that the operator algebras with a cai are for some purposes 
the  good generalization of $C^*$-algebras (recall that 
all $C^*$-algebras have a cai).
We call a (Jordan) operator algebra  with a cai  {\em approximately unital}.
We focus on this class here, although there are many things one can do for 
algebras without a cai. 

We recall that an operator $T : A \to B$ between $C^*$-algebras (or {\em operator systems},
that is, selfadjoint unital subspaces
of $C^*$-algebras) is {\em completely
positive} if $T(A_+) \subset B_+$, and similarly at the matrix levels.  That is $T$ acting entrywise takes $M_n(A)_+$ into $M_n(B)_+$ for all
$n \in \Ndb$.  Recall that $M_n(A)$ is also a $C^*$-algebra if $A$ is a $C^*$-algebra.  
Similarly $T$ is {\em completely contractive} if this entrywise action on $M_n(A)$ is contractive for all $n$. 
A linear map $T : A \to B$ between (Jordan) operator
algebras or unital operator spaces is {\em real completely
positive}
 if $T({\mathfrak r}_{A}) \subset {\mathfrak r}_{B}$
and similarly at the matrix levels
(i.e.\ ${\mathfrak r}_{M_n(A)}$ is taken to 
${\mathfrak r}_{M_n(B)}$ for all
$n \in \Ndb$).

 (Trace preserving) completely positive maps are the standard way to take a  measurement
in quantum information theory.   They appear explicitly or implicitly in 
the important definitions of {\em   quantum channels,} or   {\em POVM's},  or 
{\em quantum instruments} in that subject.  
Completely positive maps are also often viewed as the correct `quantum analogue' of 
positive maps between function spaces.  Much of the present paper
however focuses on positivity as opposed to complete positivity.
  So mostly we will consider 
 maps/isometries/projections
that are simply {\em real positive}, i.e.\ ${\mathfrak r}_{A}$ 
is taken to ${\mathfrak r}_{B}$.
This will be less interesting for some (certainly for the author,
who usually works on things related to operator spaces,
 see e.g.\ \cite{BLM}).   We justify this for the present article  for two reasons.   First, 
positive maps are quite interesting, for example in the Jordan approach to 
quantum physics or entanglement, etc, 
even if they are less important in many applications.
Though they may be poorer relatives, they are often fine fellows, and can go places that 
the other cannot.  Moreover even in the theory of complete positivity,
maps that are positive but not completely positivity often make an appearance.     In the physics literature this
happens for example in the study of entanglement
where  positive but not completely positivity  maps  are needed (i.e.\ necessary and sufficient) to test if a fixed given mixed state is entangled or separable 
\cite{HHH}.   There 
are papers with titles like ``Who’s afraid of not completely positive maps?" (by Sudarshan et al). Positive but not completely positive maps are sometimes important in the theory.  
Thus the theory of general positive maps deserves
to be worked out, and that is the setting of much of the present article.
The second justification for looking at positivity as opposed to complete positivity
here is that a main goal of the present paper is to describe the most recent updates,
and recently we have been investigating  what transpires if the word
`completely' is dropped.

\begin{theorem} \label{stinei1}  \cite{BRI,BBS}   
A (not necessarily unital) linear map $T : A \to B$ between $C^*$-algebras or operator
systems is completely positive in the usual sense if and only if it is real completely
positive.   Also, $T$ is real positive if and only if it is positive in the usual sense.  
\end{theorem}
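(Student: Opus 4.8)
The plan is to isolate the ``$n=1$'' statement --- that $T$ is real positive iff positive --- and then obtain the ``completely'' versions for free: if $A$ and $B$ are $C^*$-algebras (resp.\ operator systems), then so are $M_n(A)$ and $M_n(B)$, and by definition $T$ is real completely positive exactly when each ampliation $T_n=\mathrm{id}_{M_n}\otimes T:M_n(A)\to M_n(B)$ is real positive, and completely positive exactly when each $T_n$ is positive. So it suffices to prove the last sentence of the theorem and then apply it to each $T_n$ in turn.

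For the direction ``positive $\Rightarrow$ real positive'' I would argue as follows. Since $B_+\subseteq B_{\mathrm{sa}}$ and $A_{\mathrm{sa}}=A_+-A_+$, a positive $T$ carries $A_{\mathrm{sa}}$ into $B_{\mathrm{sa}}$, hence is $*$-linear, and so commutes with $\mathrm{Re}(\cdot)=\tfrac{1}{2}(\,\cdot+(\cdot)^*)$. Thus for $x\in{\mathfrak r}_A$ we get $\mathrm{Re}(T(x))=T(\mathrm{Re}(x))\in T(A_+)\subseteq B_+$, i.e.\ $T(x)\in{\mathfrak r}_B$. (Running the same computation with $T_n$ in place of $T$ gives completely positive $\Rightarrow$ real completely positive.)

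The substantive direction is ``real positive $\Rightarrow$ positive'', and the device I would use is a rotation trick that needs no unit. For $b\in A_+$ and $|\theta|<\pi/2$ one computes $\mathrm{Re}(e^{i\theta}b)=(\cos\theta)\,b\geq 0$, so $e^{i\theta}b\in{\mathfrak r}_A$; applying $T$ gives $e^{i\theta}T(b)\in{\mathfrak r}_B$ for all such $\theta$, and since ${\mathfrak r}_B$ is norm closed, letting $\theta\to\pm\pi/2$ yields $iT(b),-iT(b)\in{\mathfrak r}_B$. Then $\mathrm{Re}(iT(b))\geq 0$ while $\mathrm{Re}(iT(b))=-\mathrm{Re}(-iT(b))\leq 0$, so $\tfrac{i}{2}(T(b)-T(b)^*)=\mathrm{Re}(iT(b))=0$, that is, $T(b)=T(b)^*$. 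Hence $T(A_+)\subseteq B_{\mathrm{sa}}$; combined with $A_+\subseteq{\mathfrak r}_A$ (so $T(A_+)\subseteq{\mathfrak r}_B$) and the elementary identity ${\mathfrak r}_B\cap B_{\mathrm{sa}}=B_+$, this yields $T(A_+)\subseteq B_+$. The same computation shows incidentally that $T$ is automatically $*$-linear. Nothing here uses a unit, so this covers the non-unital $C^*$-algebra case along with operator systems.

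I expect the only real obstacle to be the step that extracts self-adjointness of $T(b)$ for $b\geq 0$: a priori $T$ is merely linear, not Hermitian, so the bare containment $T({\mathfrak r}_A)\subseteq{\mathfrak r}_B$ must be leveraged before one can intersect with the self-adjoint parts, and the rotation argument above is precisely what does this. An alternative would be to pass to the biduals $A^{**}$ and $B^{**}$ (unital von Neumann algebras, with $T^{**}$ weak* continuous and still real positive) and run a unital version of the argument there; but that forces one to track how ${\mathfrak r}$ and ${\mathfrak F}$ behave under second duals, so the direct rotation trick seems cleaner.
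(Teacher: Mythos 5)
Your argument is correct, and it is essentially the standard one: the paper does not prove Theorem \ref{stinei1} itself but cites \cite{BRI,BBS}, where the nontrivial direction (real positive $\Rightarrow$ positive) is obtained by the same rotation device --- for $b\geq 0$ the elements $e^{i\theta}b$ are accretive for $|\theta|\leq \pi/2$, so $e^{i\theta}T(b)$ is accretive for all such $\theta$, which pins $T(b)$ down to the positive cone. Your reduction to the $n=1$ case via the ampliations $T_n$, the $*$-linearity argument for the easy direction, and the extraction of self-adjointness of $T(b)$ from $\pm iT(b)\in{\mathfrak r}_B$ followed by ${\mathfrak r}_B\cap B_{\mathrm{sa}}=B_+$ are all sound.
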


Thus one does not lose anything from the (completely) positive theory when considering real (completely)
 positive maps.

\begin{theorem} \label{stine2}  {\rm (Extension and Stinespring-type result), \cite{BRI,BBS,BWj}} \ A linear map $T : A \to B(H)$ on
an  approximately unital Jordan operator
algebra or unital operator space is  real completely
positive if and only if  $T$ has a completely positive (in the usual sense) extension $\tilde{T} : C^*(A) \to B(H)$.  Here $C^*(A)$ is any $C^*$-algebra generated by $A$.
This is equivalent to being able
to write $T$ as the restriction to $A$ of $V^* \pi(\cdot) V$ for a
$*$-representation $\pi : C^*(A) \to B(K)$, and an operator $V : H \to K$. 
  \end{theorem}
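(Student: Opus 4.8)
The plan is to prove Theorem \ref{stine2} by reducing everything to the classical Arveson extension theorem and Stinespring dilation, using the unitization trick and the characterization of real complete positivity in terms of the behavior on the cone ${\mathfrak r}_A$. The three asserted equivalences form a cycle: (i) $T$ is real completely positive; (ii) $T$ extends to a completely positive $\tilde T : C^*(A) \to B(H)$; (iii) $T = V^* \pi(\cdot)V|_A$ for a $*$-representation $\pi$ of $C^*(A)$ and a bounded operator $V : H \to K$. The implications (ii) $\Rightarrow$ (iii) and (iii) $\Rightarrow$ (i) are the routine directions, so I would dispatch them first, then concentrate on (i) $\Rightarrow$ (ii), which is the crux.

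First I would handle (ii) $\Rightarrow$ (iii): given a completely positive $\tilde T : C^*(A) \to B(H)$, apply the classical Stinespring theorem to get a $*$-representation $\pi : C^*(A) \to B(K)$ and $V : H \to K$ (bounded, with $\|V\|^2 = \|\tilde T(1)\|$ if $C^*(A)$ is unital; in the nonunital case one passes to the unitization $C^*(A)^1$ and extends $\tilde T$ there first, using that $\tilde T$ is automatically bounded). Restricting to $A$ gives (iii). For (iii) $\Rightarrow$ (i): if $T = V^*\pi(\cdot)V$ on $A$, then for $a \in {\mathfrak r}_A$ we have $T(a) + T(a)^* = V^*(\pi(a) + \pi(a)^*)V = V^* \pi(a + a^*) V \geq 0$ since $\pi$ is a $*$-homomorphism and $a + a^* \geq 0$ in $C^*(A)$; the same computation applied to $\pi$ amplified to matrices (or to $M_n(A) \subset M_n(C^*(A))$) gives real complete positivity. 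One small point: one must check that $a + a^* \geq 0$ computed inside $A \subset B(H)$ agrees with positivity in $C^*(A)$, which is immediate since $C^*(A) \subset B(H)$ and positivity is representation-independent.

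The main work is (i) $\Rightarrow$ (ii), and the key obstacle is that $A$ is neither selfadjoint nor (in the unital-operator-space case) an algebra, so one cannot directly invoke Arveson's theorem — one needs to first pass to an operator \emph{system}. The strategy: let $A^1$ be the Meyer unitization (for the Jordan operator algebra case) or simply adjoin a unit; embed $A^1 \subset B(H')$ concretely and let $\mathcal S = A^1 + (A^1)^*$ be the operator system it generates inside $C^*(A)$ (or inside a $C^*$-algebra generated by $A^1$). The real positivity hypothesis, together with Proposition \ref{2co} (so that ${\mathfrak r}_A = \overline{\Rdb_+ {\mathfrak F}_A}$) and the fact that $A$ is approximately unital, should let one first extend $T$ to a unital map $T^1$ on $A^1$ that is still real completely positive — this uses the cai to control $T$ on the adjoined unit, an argument that appears in \cite{BRI, BBS}. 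Then one extends $T^1$ from the operator system $\mathcal S$ to a completely positive map on $\mathcal S$ by showing real complete positivity on $A^1$ forces genuine complete positivity on $\mathcal S$: for $x \in \mathcal S$ with $x \geq 0$ one writes $x$ (approximately) as a combination built from ${\mathfrak F}_{A^1}$-type elements and pushes positivity through; alternatively one checks directly that the induced map on $M_n(\mathcal S)_+$ lands in $M_n(B(H))_+$ using that matrices over $\mathcal S$ are spanned by real positive elements of $M_n(A^1)$ and their adjoints. Finally, Arveson's extension theorem extends this completely positive map on $\mathcal S$ to all of $C^*(A)^1$, and restriction back to $C^*(A)$ (then to $A$) gives $\tilde T$.

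The delicate point I expect to fight with is the passage from ``real completely positive on the non-selfadjoint $A^1$'' to ``completely positive on the operator system $\mathcal S = A^1 + (A^1)^*$'': one must show the map is well-defined and positive on self-adjoint elements of $\mathcal S$ that may be written in more than one way as $a + b^*$ with $a,b \in A^1$, and that it is genuinely completely positive and not merely positive. This is where Theorem \ref{stinei1} in spirit is being re-proved in the non-selfadjoint setting, and where the cone identity ${\mathfrak r}_A = \overline{\Rdb_+ {\mathfrak F}_A}$ and a density/continuity argument (the extension is automatically bounded, hence extends from $\Rdb_+{\mathfrak F}_A$ to its closure) do the heavy lifting. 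The nonunital bookkeeping — making sure everything survives the reduction to the unitization and back, and that $C^*(A)$ may be taken to be \emph{any} generating $C^*$-algebra — is the routine-but-fiddly remainder.
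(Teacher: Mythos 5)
The paper itself gives no proof of Theorem \ref{stine2}; it is quoted from \cite{BRI,BBS,BWj}, and your outline follows essentially the same route as those sources: reduce to a unital map on $A^1$ via the cai, pass to the operator system $A^1+(A^1)^*$, show the canonical extension $a+b^*\mapsto T(a)+T(b)^*$ there is completely positive, and finish with Arveson extension and Stinespring; the reverse implications are handled exactly as you say. The one point worth tightening is the crux you flag: the heavy lifting is not really done by ${\mathfrak r}_A=\overline{\Rdb_+{\mathfrak F}_A}$ and a density argument, but by your ``alternative'' direct check --- namely that $T$ is selfadjoint on $\Delta(A^1)$ (via Theorem \ref{stinei1}), which gives well-definedness of the extension to $A^1+(A^1)^*$, together with the observation that a selfadjoint $a+b^*\in M_n(A^1+(A^1)^*)$ is positive precisely when $a+b\in{\mathfrak r}_{M_n(A^1)}$, so that real complete positivity of $T$ transfers directly to complete positivity on the operator system. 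With that substitution your plan is correct and is the standard argument.
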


This last result generalizes to a larger class of algebras both   the original  Stinespring theorem \cite{Sti}, and the 
Arveson extension theorem for completely positive maps \cite{Arv,Pau}.    Of course the range of $\tilde{T}$ will typically be much larger than the range 
of $T$.   In the final pages of this article we will describe a setting
where the extension $\tilde{T}$ surprisingly has the same range as $T$.  

Since it is easy to see that every real positive scalar valued functional 
on such a space $A$ is real completely
positive, it follows that such real positive functionals
are simply the restriction to $A$ of positive scalar multiples
of states (that is, positive norm 1 linear functionals)  on $C^*(A)$.   Thus we obtain a generalization 
of the famous GNS theorem: A functional $\varphi : A \to \Cdb$ is real positive if and only if 
there is a $*$-representation $\pi : C^*(A) \to B(K)$, and a vector $\zeta \in H$ with 
$\varphi(x) =  \langle \, \pi(x) \, \zeta ,  \zeta  \rangle$  for $x \in A$.

The ordering induced on $A$ by the real positive cone is obviously
$b \preccurlyeq a$ iff $a-b$ is real positive.  One may then go ahead and
follow the $C^*$-theory by studying for example the order theory in the unit ball.
Order theory in the unit ball of a $C^*$-algebra, or of its dual, is crucial  in $C^*$-algebra theory.
A feature of the first such result below is that having the order theory is  possible if
and only if there is a contractive approximate identity around.
On the other hand the following couple of results are fairly obvious
for unital algebras.  

\begin{theorem} \label{orderth}  Let  $A$ be an  (Jordan)  operator algebra which generates a $C^*$-algebra
$B$, and let ${\mathcal U}_A = \{ a \in A : \Vert a \Vert < 1 \}$.  The following are equivalent:
\begin{itemize} \item [(1)]   $A$ is approximately unital.
 \item [(2)]  For any {\em positive} $b \in {\mathcal U}_B$ there exists  a real positive $a$
with $b \preccurlyeq  a  \preccurlyeq 1$.
\item [(3)]   For any pair
$x, y \in {\mathcal U}_A$ there exists a   real positive  contraction
$a$ 
with $x \preccurlyeq a$ and $y \preccurlyeq a$.
\item [(4)]    For any $b \in {\mathcal U}_A$  there exists a  real positive contraction
$a$ 
with $-a \preccurlyeq b \preccurlyeq a$.
\item [(5)] For any $b \in {\mathcal U}_A$  there exists a real positive contractions
$x, y$ with $b = x-y$.
\item [(6)]  ${\mathfrak r}_A$ is a generating cone (that is, $A = {\mathfrak r}_A - {\mathfrak r}_A$).
\end{itemize}
\end{theorem}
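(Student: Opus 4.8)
The plan is to prove the chain of implications $(1) \Rightarrow (2) \Rightarrow (3) \Rightarrow (4) \Rightarrow (5) \Rightarrow (6) \Rightarrow (1)$, using the Stinespring-type result (Theorem \ref{stine2}) to move back and forth between $A$ and the generated $C^*$-algebra $B$, together with Proposition \ref{2co} and Theorem \ref{cairp}. The implications $(4) \Rightarrow (5) \Rightarrow (6)$ are essentially formal: given $-a \preccurlyeq b \preccurlyeq a$ with $a$ a real positive contraction, one writes $b = \frac{1}{2}(a+b) - \frac{1}{2}(a-b)$ and checks that $\frac{1}{2}(a \pm b)$ are real positive contractions (real positivity is clear from $b \preccurlyeq a$ and $-a \preccurlyeq b$; the contractivity needs the observation that if $0 \preccurlyeq c \preccurlyeq a$ and $\|a\| \le 1$ then $\|c\| \le 1$, which one gets by passing to $C^*(A)$ where $0 \le \mathrm{Re}\, c \le \mathrm{Re}\, a$ and estimating, or directly from ${\mathfrak F}_A$-type inequalities). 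And $(5) \Rightarrow (6)$ is immediate by scaling, since every element of $A$ is a scalar multiple of something in ${\mathcal U}_A$.

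For $(6) \Rightarrow (1)$ I would invoke Theorem \ref{cairp}: if ${\mathfrak r}_A$ is generating then certainly the real positive elements span $A$, hence $A$ has a (Jordan) cai, i.e.\ $A$ is approximately unital. The substantive direction is $(1) \Rightarrow (2)$. Here I would use that, since $A$ is approximately unital, by Theorem \ref{stine2} applied to the inclusion $A \hookrightarrow B \subset B(H)$ (or rather by the structure theory underlying it) the identity $1_{A^{1}}$ acts as a support-type projection, and there is a quasi-state/peak-type construction: given a positive $b$ in the open unit ball of $B$, one must produce $a \in {\mathfrak r}_A$ with $b \preccurlyeq a \preccurlyeq 1$. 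The idea is to use a cai $(e_t)$ of $A$ which may be taken real positive (Theorem \ref{cairp}), note that $e_t \to 1$ in an appropriate strong sense in $B^{**}$, so that $e_t \succcurlyeq$ a large portion of $b$; then a careful convexity/limiting argument — essentially the one used in \cite{BRII, BRord} for the analogous statement — produces a genuine $a \in A$ trapped between $b$ and $1$. One works in $B^{**}$, uses that $p = 1_{A^{**}}$ is an open projection dominating the support projection of $b$ (because $b \in \overline{A \cap B_+}$-type density, which is where approximate unitality enters), and then pulls back into $A$ by a Hahn–Banach/normality argument.

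The remaining implications $(2) \Rightarrow (3)$ and $(3) \Rightarrow (4)$ are interpolation-in-the-unit-ball arguments. For $(2) \Rightarrow (3)$, given $x, y \in {\mathcal U}_A$, one applies (2) inside $B$ to a positive element dominating both $\mathrm{Re}\, x$ and $\mathrm{Re}\, y$ (for instance an element of $\mathcal{U}_B$ above $x^{*}x + y^{*}y$ suitably normalized, using that in $B$ one has the usual $C^*$-order theory so such majorants with the needed domination properties exist), obtaining a real positive contraction $a \in A$ with $x \preccurlyeq a$ and $y \preccurlyeq a$; the point is that $x \preccurlyeq a$ in $A$ is the same as $\mathrm{Re}(a-x) \ge 0$ in $B$, so the $C^*$-interpolation is directly applicable. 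Then $(3) \Rightarrow (4)$ follows by taking $x = b$ and $y = -b$. The main obstacle I anticipate is $(1) \Rightarrow (2)$: controlling the passage from the approximate identity of $A$ to a single interpolating element $a$ that genuinely lies in $A$ (not merely in $B$ or $B^{**}$) while keeping the two-sided bound $b \preccurlyeq a \preccurlyeq 1$, since the open projection $1_{A^{**}}$ need not be central and the order-theoretic pull-back from $B^{**}$ to $A$ is delicate; this is precisely the technical heart of the Blecher–Read order theory and where I would lean hardest on \cite{BRII, BRord}.
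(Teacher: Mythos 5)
First, a caveat: the paper gives no proof of Theorem \ref{orderth} --- it is a survey statement quoted from \cite{BRord} (and \cite{BWj} for the Jordan case) --- so there is no in-paper argument to compare yours against; I can only assess your proposal on its own terms. Your architecture (a cycle $(1)\Rightarrow(2)\Rightarrow\cdots\Rightarrow(6)\Rightarrow(1)$, with $(6)\Rightarrow(1)$ dispatched by Theorem \ref{cairp} and the weight placed on $(1)\Rightarrow(2)$) is reasonable, but two steps do not go through as written. The step $(2)\Rightarrow(3)$ has a genuine gap: condition (2) supplies a real positive $a$ with $b\preccurlyeq a\preccurlyeq 1$, and $a\preccurlyeq 1$ constrains only ${\rm Re}\,a$, not $\Vert a\Vert$, whereas condition (3) demands a \emph{contraction}. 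An element with $0\le {\rm Re}\,a\le 1$ can have arbitrarily large norm (large skew part), so you cannot deduce (3) from (2) as literally stated; you need the strengthened form of (2) recorded in the remark after the theorem (the interpolating element may be chosen in $\frac{1}{2}{\mathfrak F}_A$, which does force $\Vert a\Vert\le 1$). The same confusion of the real-part ordering with a norm bound underlies your auxiliary claim that $0\preccurlyeq c\preccurlyeq a$ with $\Vert a\Vert\le 1$ forces $\Vert c\Vert\le 1$: this is false (take $c=\epsilon+iN$ and $a=\epsilon$ in a unital algebra). Fortunately that claim is not needed where you use it, since in $(4)\Rightarrow(5)$ one has $\Vert \frac{1}{2}(a\pm b)\Vert\le\frac{1}{2}(\Vert a\Vert+\Vert b\Vert)<1$ directly from the triangle inequality.

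Second, the implication $(1)\Rightarrow(2)$, which you correctly identify as the technical heart, is only gestured at, and the tools you name are not the ones that do the work: Theorem \ref{stine2} (Stinespring/extension) plays no role here, and the "open projection plus Hahn--Banach pull-back" sketch does not contain the idea that produces a single element of $A$ trapped between $b$ and $1$. What is actually used is that an approximately unital algebra has a cai consisting of nearly positive elements of $\frac{1}{2}{\mathfrak F}_A$ (Theorem \ref{cairp} together with Proposition \ref{2co}) plus the quantitative approximate-identity estimates of \cite{BRI,BRII}, which allow one to majorize a given positive $b\in{\mathcal U}_B$ by one such element; deferring to \cite{BRII,BRord} is fair in context, but your paragraph is not a proof of this step. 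In summary: the formal implications $(3)\Rightarrow(4)\Rightarrow(5)\Rightarrow(6)\Rightarrow(1)$ are essentially right (modulo the false lemma, which is avoidable), $(2)\Rightarrow(3)$ has a real gap unless (2) is upgraded to yield contractions, and the core interpolation $(1)\Rightarrow(2)$ is located correctly in the literature but not actually argued.
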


The real positive elements above may be chosen `nearly positive' and in $\frac{1}{2}  {\mathfrak F}_A$.  

\begin{corollary}  \label{cofi}   Thus if an approximately unital  (Jordan) 
 operator algebra $A$ generates a $C^*$-algebra
$B$, then $A$ is {\em order cofinal} in $B$: given $b \in B_+$ there exists $a \in A$ with $b \preccurlyeq a$.   Indeed we can do this with
$b \preccurlyeq a \preccurlyeq \Vert b \Vert + \epsilon$.
\end{corollary}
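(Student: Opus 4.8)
The plan is to obtain this as an immediate consequence of the implication $(1)\Rightarrow(2)$ of Theorem \ref{orderth}, together with a rescaling. Fix $b\in B_+$ and $\epsilon>0$, and put $\lambda=\Vert b\Vert+\epsilon>0$. Then $\lambda^{-1}b$ is again positive (a nonnegative scalar multiple of a positive element) and $\Vert \lambda^{-1}b\Vert=\Vert b\Vert/\lambda<1$, so $\lambda^{-1}b\in {\mathcal U}_B$. Note that this is precisely why the refined bound in the statement carries the extra $\epsilon$: dividing by $\Vert b\Vert$ alone would only land $b$ on the boundary of ${\mathcal U}_B$, not inside it, so Theorem \ref{orderth}(2) would not directly apply.

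Since $A$ is approximately unital, Theorem \ref{orderth} (the implication $(1)\Rightarrow(2)$) provides a real positive $a_0\in A$ with $\lambda^{-1}b\preccurlyeq a_0\preccurlyeq 1$; by the remark immediately following that theorem, $a_0$ may in addition be taken `nearly positive' and in $\tfrac12{\mathfrak F}_A$. Here the order $\preccurlyeq$ is the one induced by the real positive cone of (the unitization of) $B$, and it makes sense to compare $b\in B$ with $a_0\in A\subset B$. Now I would multiply through by $\lambda$. The cone ${\mathfrak r}_B$ of real positive (accretive) elements, and likewise ${\mathfrak F}_A$, are closed under multiplication by nonnegative scalars, and the order is compatible with such scaling, since $\lambda(y-x)$ is real positive whenever $y-x$ is. Hence $a:=\lambda a_0\in A$ is real positive (indeed $a\in\tfrac{\lambda}{2}{\mathfrak F}_A$) and
\[
b=\lambda(\lambda^{-1}b)\preccurlyeq \lambda a_0=a\preccurlyeq \lambda\cdot 1=(\Vert b\Vert+\epsilon)1,
\]
which is exactly the asserted refinement $b\preccurlyeq a\preccurlyeq \Vert b\Vert+\epsilon$; discarding the upper bound gives order cofinality of $A$ in $B$. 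The degenerate case $b=0$ is subsumed, since then $\lambda=\epsilon>0$ and the same computation goes through (or one may simply take $a=0$).

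I expect no genuine obstacle here: all the real content is already in Theorem \ref{orderth}, and the proof is just bookkeeping. The only two points that need a moment's care are (i) that the normalization $b\mapsto\lambda^{-1}b$ puts $b$ strictly inside the unit ball of $B$ while preserving positivity, so that Theorem \ref{orderth}(2) is applicable, and (ii) that both the relevant cones and the order $\preccurlyeq$ respect multiplication by positive scalars — both of which are immediate from the definitions.
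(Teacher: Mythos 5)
Your proof is correct and is exactly the derivation the paper intends: the corollary is stated as an immediate consequence of Theorem \ref{orderth}(2), obtained by normalizing $b$ by $\Vert b\Vert+\epsilon$ to land in ${\mathcal U}_B$ and then rescaling, using that the real positive cone and the order $\preccurlyeq$ are invariant under multiplication by positive scalars. No further comment is needed.
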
 

We will use this result later.

We recall that the positive part of the  open unit ball ${\mathcal U}_B$ of a $C^*$-algebra $B$
is a directed set, and indeed is a  net which is a positive cai for $B$.  The following generalizes this to operator algebras:

\begin{corollary}  \label{pcai}  If $A$ is an approximately unital (Jordan) operator algebra, then the real positive strict contractions, indeed
the set of real positive elements $\{ a \in A : \| a \| < 1 , \| 1 - 2a \| \leq 1 \}$, is a directed set in the $\preccurlyeq$ ordering, and with this ordering this set  is an  increasing
cai for $A$.  
\end{corollary}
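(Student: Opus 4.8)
The plan is to deduce this from the order theory already assembled, principally Theorem~\ref{orderth} and its corollaries, together with the standard functional-calculus fact about $C^*$-algebras. Let $B = C^*(A)$ be a $C^*$-algebra generated by $A$, and write $\Sigma = \{ a \in A : \| a \| < 1, \| 1 - 2a \| \leq 1 \}$; note that $\| 1 - 2a \| \leq 1$ says exactly $a \in \frac{1}{2} {\mathfrak F}_A$, so every element of $\Sigma$ is real positive by the remark following Theorem~\ref{orderth} (indeed $\frac12 {\mathfrak F}_A \subset {\mathfrak r}_A$). First I would show $\Sigma$ is directed upward in $\preccurlyeq$. Given $x, y \in \Sigma$, apply Theorem~\ref{orderth}(3) to get a real positive contraction $a$ with $x \preccurlyeq a$ and $y \preccurlyeq a$; but the sharpened statement (``chosen nearly positive and in $\frac12 {\mathfrak F}_A$'') lets us take $a \in \frac12 {\mathfrak F}_A$, i.e.\ $\| 1 - 2a \| \leq 1$. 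A small point: one needs $\| a \| < 1$ strictly, not just $\leq 1$; I would handle this by first replacing $x, y$ by $(1+\epsilon)^{-1} x$, etc., or by scaling the majorant $a$ slightly — shrinking an element of $\frac12{\mathfrak F}_A$ toward $0$ keeps it in $\frac12{\mathfrak F}_A$ and keeps the inequalities $x \preccurlyeq a$ intact only after a compensating argument, so the cleaner route is to perturb the inputs. This gives directedness.

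Next I would show $\Sigma$, viewed as a net indexed by itself under $\preccurlyeq$, is a cai for $A$. The mechanism: by Corollary~\ref{pcai}'s hypothesis $A$ is approximately unital, so by Theorem~\ref{cairp} (or directly) $A$ has a real positive cai; moreover by Corollary~\ref{pcai}'s companion results the positive part of ${\mathcal U}_B$ is a positive cai for $B$. The key step is to show that for each $a_0 \in \Sigma$ and each $\epsilon > 0$ there is $a \in \Sigma$ with $a \succcurlyeq a_0$ and $\| a x - x \|, \| x a - x \|$ (or, in the Jordan case, $\| a \circ x - x \|$) small for $x$ in a prescribed finite set. Here I would invoke the $C^*$-algebra fact: inside $B$, if $d$ is a positive contraction then the positive elements $e$ of ${\mathcal U}_B$ with $e \geq d$ eventually act as a unit on any given finite set, because for a positive cai $(e_t)$ of $B$ the elements $e_t^{1/2} + \text{(correction)}$, or simply $f(e_t)$ for suitable $f$, dominate $d$ and still approximate the identity; more precisely one uses that $\{ e \in {\mathcal U}_B^+ : e \geq d\}$ is cofinal in $({\mathcal U}_B^+, \leq)$, which itself is a cai. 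Then one must pull this back into $A$: given such an $e \in B$ dominating $a_0$, Corollary~\ref{cofi} / order-cofinality lets us find $a' \in A$ real positive with $e \preccurlyeq a'$, but that pushes the norm past $1$; instead I would apply Theorem~\ref{orderth}(3)–(4) with the finite set of relevant $x$'s absorbed into the construction, producing a genuine element of $A$ of norm $< 1$ in $\frac12{\mathfrak F}_A$ dominating $a_0$ and approximately fixing the $x$'s. Iterating over finite subsets and $\epsilon$ exhibits $\Sigma$ as an increasing cai.

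The main obstacle I anticipate is precisely this last pull-back: reconciling three competing demands — membership in $\Sigma$ (so norm strictly $<1$ and $\|1-2a\|\le 1$), domination $a \succcurlyeq a_0$ in the $A$-ordering, and the approximate-identity estimates $a \circ x \approx x$ — simultaneously, since the natural $C^*$-algebraic majorants live in $B$ and a priori leave $A$, and naive order-cofinality inflates the norm. The resolution is that Theorem~\ref{orderth} was built to do exactly this inside $A$: the equivalence of (1) with (3)–(5) and the ``nearly positive, in $\frac12{\mathfrak F}_A$'' refinement give majorants that are already contractions in $A$, and one then only needs to additionally arrange the approximate-unit behavior, which follows because $\frac12{\mathfrak F}_A$-elements close to a real positive cai act approximately as units (the functional calculus $t \mapsto 2t - $ small shows elements of $\frac12{\mathfrak F}_A$ near $\frac12$ of the spectral unit are near $1$ on the relevant subspace). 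I would also need to record the Jordan-case bookkeeping — replacing $ax, xa$ by $a \circ x$ throughout and using that for an associative operator algebra a Jordan cai yields an ordinary cai, as noted before Theorem~\ref{cairp} — but this is routine given the machinery already in place.
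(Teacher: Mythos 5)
The paper itself offers no proof of Corollary \ref{pcai} (it is a survey statement, quoted from the order-theory papers of Blecher--Read and their Jordan analogues), so your proposal must be measured against the argument in those sources. Your overall strategy is the right one --- directedness via Theorem \ref{orderth}(3) with the $\frac{1}{2}{\mathfrak F}_A$ refinement, and the cai property by comparison with the positive part of ${\mathcal U}_B$ --- but the execution has a genuine gap at the crucial point. Your ``key step'' has the quantifiers in the wrong order: to show the net indexed by $(\Sigma,\preccurlyeq)$ converges you must produce, for each $x$ and $\epsilon$, an $a_0\in\Sigma$ such that \emph{every} $a\succcurlyeq a_0$ in $\Sigma$ satisfies $\|ax-x\|<\epsilon$; what you propose to prove (``for each $a_0$ and $\epsilon$ there is some $a\succcurlyeq a_0$ with $\|ax-x\|$ small'') only shows that $x$ is a cluster point of the net. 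The missing ingredient that both fixes the quantifiers and effects the ``pull-back'' you worry about is the operator inequality $(1-a)^*(1-a)=2\,{\rm Re}(1-a)-(1-a^*a)\le 2\,(1-{\rm Re}\,a)$, valid for any contraction $a$ (computed in $B^1$, $B = C^*(A)$). If $b$ is positive in ${\mathcal U}_B$ and $a\in\Sigma$ with $b\preccurlyeq a$, this yields $\|x-ax\|^2=\|x^*(1-a)^*(1-a)x\|\le 2\,\|x^*(1-b)x\|\le 2\,\|x\|\,\|x-bx\|$, and symmetrically on the right using $(1-a)(1-a)^*\le 2\,(1-{\rm Re}\,a)$. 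Since ${\rm Re}$ is monotone along $\preccurlyeq$, this estimate holds uniformly for all $a\succcurlyeq a_0$ once $a_0\succcurlyeq b$; Theorem \ref{orderth}(2) supplies such an $a_0$ in $\frac{1}{2}{\mathfrak F}_A$, and the positive part of ${\mathcal U}_B$ is a cai for $B\supset A$, so the proof closes. Your substitute for this step --- ``the functional calculus $t\mapsto 2t-$ small shows elements of $\frac{1}{2}{\mathfrak F}_A$ near $\frac{1}{2}$ of the spectral unit are near $1$'' --- is not an argument: elements of a nonselfadjoint algebra are not normal, no such functional calculus is available, and $a\succcurlyeq b$ is an order relation, not norm proximity.

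A second, smaller gap: you correctly flag that directedness requires $\|a\|<1$ strictly, but neither repair you sketch works. Scaling the inputs $x,y$ gives no control on the norm of the majorant produced by Theorem \ref{orderth}(3), and scaling the majorant destroys domination, since $t\,{\rm Re}(a)\ge{\rm Re}(x)$ does not follow from ${\rm Re}(a)\ge{\rm Re}(x)$ for $t<1$ (even though $\frac{1}{2}{\mathfrak F}_A$ is stable under such scaling). The strict inequality comes from the quantitative form of the majorant construction in \cite{BRI,BRord} (the nearly positive elements built there from transforms of the type $x(x+\epsilon 1)^{-1}$ carry the required norm bound); it is not recoverable by an a posteriori perturbation of the statements as quoted in this survey. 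The Jordan bookkeeping you defer is indeed routine once the two-sided estimates above are in hand, since $a\circ x=\frac{1}{2}(ax+xa)$.
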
  

It is also interesting to consider order theory in the dual of an (Jordan) operator algebra but we will not do so here (see e.g.\ \cite{BRord}).
In most of the remainder of the paper we turn to  real positive maps and projections on operator algebras.

  \section{Real positive maps} \label{Sec3} 

We recall again that a linear map $T : A \to B$ is  real positive if $T({\mathfrak r}_A)
\subset {\mathfrak r}_B$.  We saw in  Theorem \ref{stinei1}
that the real positive maps 
on $C^*$-algebras or operator systems are just  the positive maps
in the usual sense.
The following recent results are a sample from Section 2 in \cite{BNjp}.  
 We have chosen to include several of these results partly because they are used in later
theorems, and it is helpful to see how the theory builds on itself.   We also remark that there are (historically earlier) 
operator space versions of most of the following results (with the word `completely' added in many places), but that 
is not our focus here.  The interested reader
can find these in our papers referenced in the introduction, e.g. \cite{BNp}.

\begin{proposition} \label{normpo} 
If $T : A \to B$ is a  real positive linear map  between unital  (resp.\ approximately unital)  Jordan operator algebras
then $T$ is bounded and
$\| T \| = \| T(1) \|$  (resp.\ $\| T \| = \sup_t \, \| T(e_t) \| = \| T^{**}(1) \|$),
if $(e_t)$ is a (Jordan) cai for $A$.
\end{proposition}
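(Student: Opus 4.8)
The plan is to reduce everything to scalar-valued real positive functionals, where the statement is immediate from the GNS/extension theory already recalled, and to reduce the approximately unital case to the unital one by passing to biduals, and then, in the unital case, to import Russo--Dye type $C^*$-technology. The inequality $\|T\|\geq\|T(1)\|$ (resp.\ $\|T\|\geq\|T(e_t)\|$ for each $t$) is trivial since $\|1\|=\|e_t\|=1$. Boundedness is essentially free: representing $B\subseteq B(H)$ with generated $C^*$-algebra $C^*(B)$, for each state $g$ of $C^*(B)$ the composite $g\circ T$ is a real positive functional on the approximately unital Jordan operator algebra $A$ (indeed $\mathrm{Re}\,(g\circ T)(a)=g(\mathrm{Re}\,T(a))\geq 0$ for $a\in\mathfrak r_A$), hence is bounded by Theorem \ref{stine2} and the discussion following it; as the states of $C^*(B)$ separate the points of $B$, a closed graph argument shows $T$ is bounded. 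For the reduction to the unital case, pass to second duals. By \cite{BWj}, $A^{**}$ and $B^{**}$ are Jordan operator algebras, $A^{**}$ is unital with identity $\mathbf 1$ the weak* limit of the $e_t$, $B^{**}$ is unital, and $\|T\|=\|T^{**}\|$. Moreover $T^{**}$ is real positive: $\mathfrak F_A$ is weak* dense in $\mathfrak F_{A^{**}}$ (Goldstine), $T^{**}$ is weak* continuous with $T^{**}(\mathfrak F_A)=T(\mathfrak F_A)\subseteq\mathfrak r_B$, and $\mathfrak r_{B^{**}}$ is a weak* closed cone containing $\mathfrak r_B$, so by the weak* analogue of Proposition \ref{2co} we get $T^{**}(\mathfrak r_{A^{**}})\subseteq\mathfrak r_{B^{**}}$. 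Granting the unital case, $\|T\|=\|T^{**}\|=\|T^{**}(\mathbf 1)\|$; and since $T^{**}(\mathbf 1)$ is the weak* limit of the $T(e_t)$ and the norm is weak* lower semicontinuous, $\|T^{**}(\mathbf 1)\|\leq\sup_t\|T(e_t)\|\leq\|T\|$, so all three quantities coincide.

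So assume $A$ is unital. First, $T(1)$ is a positive operator: since $\lambda 1\in\mathfrak r_A$ for every $\lambda$ with $\mathrm{Re}\,\lambda\geq 0$, we get $\lambda\,T(1)\in\mathfrak r_B$ for all such $\lambda$, and letting $\lambda$ run along the imaginary axis forces $T(1)=T(1)^*$, hence $T(1)=\mathrm{Re}\,T(1)\geq 0$. The same argument shows that $T$ sends every element of $A$ that is a positive operator to a positive element of $B$; in particular $T$ restricts to a positive map on any $C^*$-subalgebra of $A$. Next, for a real positive functional $\varphi$ on a unital (Jordan) operator algebra one has $\|\varphi\|=\varphi(1)$: by Theorem \ref{stine2} and the subsequent remarks, $\varphi$ is the restriction of a nonnegative multiple of a state of a generating $C^*$-algebra, so $\|\varphi\|\leq\varphi(1)$, and the reverse is trivial. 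Applying this to $g\circ T$ for each state $g$ of $C^*(B)$ yields $\|g\circ T\|=(g\circ T)(1)=g(T(1))\leq\|T(1)\|$, hence $|g(T(x))|\leq\|T(1)\|\,\|x\|$ for all $x\in A$; that is, the numerical radius of $T(x)$ in $C^*(B)$ is at most $\|T(1)\|\,\|x\|$.

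The one genuinely substantial point is to upgrade this numerical-radius estimate to the norm estimate $\|T(x)\|\leq\|T(1)\|\,\|x\|$, i.e.\ to eliminate the factor of $2$ that one pays for bounding an operator norm by a numerical radius. When $A$ is a $C^*$-algebra this is Russo--Dye technology: for a unitary $u\in A$, the restriction of $T$ to the commutative $C^*$-algebra $C^*(u,1)\subseteq A$ is positive, hence --- being defined on a commutative $C^*$-algebra --- completely positive, so the Kadison--Schwarz inequality gives $T(u)^*T(u)\leq\|T(1)\|\,T(u^*u)=\|T(1)\|\,T(1)$ and therefore $\|T(u)\|^2=\|T(u)^*T(u)\|\leq\|T(1)\|^2$; since the closed unit ball of $A$ is the closed convex hull of its unitaries and $T$ is bounded, $\|T(x)\|\leq\|T(1)\|\,\|x\|$ for all $x$, so $\|T\|\leq\|T(1)\|$, completing the unital case. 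For a general (Jordan) operator algebra $A$ there is no Russo--Dye theorem, and this is where the real work lies: one must instead run the argument through the unitization $A^1$ and a generating $C^*$-algebra, controlling $T$ via the interplay between $A$ and the (quasi-)multiplier structure there, in the spirit of the machinery alluded to in the introduction. I expect this last step, rather than any of the reductions, to be the main obstacle.
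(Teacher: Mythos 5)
Your reductions are fine as far as they go: the bidual argument for the approximately unital case, the positivity of $T(1)$ and of $T(x)$ for positive $x \in A$, the identity $\|\varphi\|=\varphi(1)$ for real positive functionals via the extension to positive multiples of states of $C^*(A)$, and the resulting numerical-radius estimate $\sup_g |g(T(x))| \leq \|T(1)\|\,\|x\|$ are all correct (modulo citing the weak* density of ${\mathfrak r}_A$ in ${\mathfrak r}_{A^{**}}$, which is a genuine theorem from the real-positivity papers rather than pure Goldstine, but is available). The problem is that everything up to that point only yields $\|T\|\leq 2\|T(1)\|$, and the entire content of the proposition beyond the trivial inequality is the removal of that factor of $2$. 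You supply that step only when $A$ is a $C^*$-algebra (Russo--Dye plus the Schwarz inequality on the commutative $C^*$-subalgebra generated by a unitary), and you explicitly concede that you do not know how to do it for a general unital (Jordan) operator algebra. Since the proposition is asserted precisely for that general class --- where there is no Russo--Dye theorem, where $\Delta(A)$ may reduce to $\Cdb 1$ (e.g.\ the disk algebra), and where the analogous statement for operator systems is genuinely false with constant $1$ --- this is a real gap at the crux of the argument, not a routine detail. The proof is therefore incomplete.

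Two smaller remarks. First, your closing speculation that the missing step should be supplied by the quasi-multiplier machinery is likely a misdirection: in this paper that machinery is developed for the nonunital Banach--Stone theorem (Theorem \ref{bs1}), not for norm estimates on real positive maps. The tools actually available at the point where Proposition \ref{normpo} is invoked are the order-theoretic results of Section \ref{sgr} --- in particular the decomposition of unit-ball elements as differences of \emph{nearly positive} real positive contractions in $\frac{1}{2}{\mathfrak F}_A$ (Theorem \ref{orderth}(5) and the sentence following it) and the cofinality statement of Corollary \ref{cofi} --- and your own observation that $0\leq T(x)\leq T(1)$ for genuinely positive contractions $x\in A$ is exactly the kind of input such a decomposition is designed to exploit; any completed argument should be sought in that direction. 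Second, your boundedness argument via closed graph is correct but roundabout: since $1\pm x$ and $1\pm ix$ lie in ${\mathfrak F}_{A^1}$ for $\|x\|\leq 1$, one reads off directly that $\|{\rm Re}\,T(x)\|$ and $\|{\rm Im}\,T(x)\|$ are at most $\|T(1)\|$, giving boundedness with the explicit (suboptimal) constant $2$ without any separation or closed-graph appeal.
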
  

For us a contraction or contractive map means it has norm $\leq 1$.

\begin{proposition}   \label{onladdn} Contractive homomorphisms (resp.\ Jordan homomorphisms)  
between approximately unital operator algebras (resp.\ Jordan operator algebras)
are  real positive.

A unital  contraction between unital (Jordan)
operator algebras is real positive.
\end{proposition}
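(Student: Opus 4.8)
The cleanest route is to reduce both statements to a single clean fact about unital contractions and the cone $\mathfrak{F}_A$, using Proposition \ref{2co} to pass from $\mathfrak{F}_A$ to all of $\mathfrak{r}_A$. The key observation is that a unital contraction $T$ between unital (Jordan) operator algebras automatically sends $\mathfrak{F}_A$ into $\mathfrak{F}_B$: if $\|1-a\|\le 1$ then $\|1_B - T(a)\| = \|T(1_A - a)\| \le \|1_A - a\| \le 1$, since $T$ is unital and contractive. Hence $T(\mathfrak{F}_A)\subset\mathfrak{F}_B$, so $T(\mathbb{R}_+\mathfrak{F}_A)\subset\mathbb{R}_+\mathfrak{F}_B$, and by continuity $T(\overline{\mathbb{R}_+\mathfrak{F}_A})\subset\overline{\mathbb{R}_+\mathfrak{F}_B}$, which is exactly $T(\mathfrak{r}_A)\subset\mathfrak{r}_B$ by Proposition \ref{2co}. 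This disposes of the second sentence of the Proposition directly.

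\textbf{Reducing the first sentence to the second.} For the statement about contractive (Jordan) homomorphisms between \emph{approximately} unital algebras, the natural strategy is to pass to unitizations and bidualize. A contractive Jordan homomorphism $T:A\to B$ extends to a unital Jordan homomorphism $T^1:A^1\to B^1$ on the unitizations (using the uniqueness-up-to-isometric-Jordan-homomorphism of the unitization from \cite{BWj} recalled in Section \ref{sgr}); one checks $T^1$ remains contractive — this is where one uses that $T$ is a homomorphism and not merely contractive, since extending a bare contraction unitally need not preserve the norm. Granting that $T^1$ is a unital contraction between unital Jordan operator algebras, the already-established case gives $T^1(\mathfrak{r}_{A^1})\subset\mathfrak{r}_{B^1}$. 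Since $\mathfrak{r}_A = \mathfrak{r}_{A^1}\cap A$ (real positivity is intrinsic, as emphasized at the start of Section \ref{sgr}) and $T^1$ restricts to $T$ on $A$, we conclude $T(\mathfrak{r}_A)\subset\mathfrak{r}_{B^1}\cap B = \mathfrak{r}_B$.

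\textbf{The main obstacle.} The delicate point is the claim that the unital extension $T^1$ of a contractive Jordan homomorphism is again contractive. One cannot simply define $T^1(a+\lambda 1) = T(a)+\lambda 1$ and hope the norm behaves; the right framework is to realize $A^1$ concretely (say inside $B(H)^1$ via the Meyer-type construction) and to use that, for a Jordan homomorphism, $\|T(a)\|$ is controlled spectrally — e.g. via $\|T(a)^n\|^{1/n}$ and the fact that $T(a^n)=T(a)^n$ (using $T(a^2)=T(a)^2$) — so that the unitized map's norm on $a+\lambda 1$ can be estimated by passing to the generated $C^*$-algebras and invoking the corresponding statement there, or alternatively by the functional-calculus/resolvent estimate $\|1-tT(a)\| = \|T(1-ta)\| \le \|1-ta\|$ for $a\in A$ and $t>0$, which already shows $T$ carries the metric characterization of accretivity (the inequality $\|1-tx\|\le 1+t^2\|x\|^2$ quoted in Section \ref{sgr}) from $A$ to $B$. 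In fact this last resolvent computation sidesteps the unitization issue entirely: for $a\in\mathfrak{r}_A$ one has $\|1_{A^1}-ta\|\le 1+t^2\|a\|^2$ for all $t>0$; applying $T^1$ (unital on $A^1$) and contractivity of $T$ on $A$ gives $\|1_{B^1}-tT(a)\|=\|T^1(1-ta)\|\le\|1-ta\|_{A^1}\le 1+t^2\|a\|^2 = 1+t^2\|T(a)\|^2$ once one knows $\|T(a)\|\le\|a\|$ — so the only genuine input beyond contractivity of $T$ is that $T^1$ is unital on the scalar part, which is automatic. Thus I expect the cleanest write-up to avoid discussing $\|T^1\|$ as a whole and instead argue element-by-element through the metric characterization of $\mathfrak{r}_A$, with the homomorphism hypothesis entering only to guarantee that the unitization $T^1$ exists as a well-defined unital map (Meyer's theorem).
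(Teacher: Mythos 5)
Your proof of the second assertion is correct and is the standard one: a unital contraction carries ${\mathfrak F}_A$ into ${\mathfrak F}_B$, hence carries $\overline{\Rdb_+ {\mathfrak F}_A}={\mathfrak r}_A$ into ${\mathfrak r}_B$ by Proposition \ref{2co}. (Equivalently one can argue with states: $\psi\circ T$ is a state of $A$ for every state $\psi$ of $B$, so numerical ranges, and hence accretivity, are preserved.) Your reduction of the first assertion to the second via unitization is also the intended route --- the paper does not reproduce a proof, citing \cite{BNjp}, but the argument there is the one you outline --- and the one genuine input is the Meyer-type theorem (and its Jordan analogue from \cite{BWj}, recalled at the start of Section \ref{sgr}) that a contractive (Jordan) homomorphism $T:A\to B$ extends to a \emph{contractive} unital homomorphism $T^1:A^1\to B^1$.

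The gap is in your last paragraph, where you claim the resolvent computation ``sidesteps the unitization issue entirely.'' It does not. The inequality $\|T^1(1-ta)\|\le\|1-ta\|_{A^1}$ is applied to the element $1-ta$, which lies in $A^1\setminus A$ when $A$ is nonunital, so contractivity of $T$ on $A$ says nothing about it; that inequality \emph{is} the statement that $T^1$ is contractive on all of $A^1$, i.e.\ it is exactly Meyer's theorem, the point you earlier (correctly) flagged as the delicate one. The assertion that ``the only genuine input beyond contractivity of $T$ is that $T^1$ is unital on the scalar part, which is automatic'' is therefore wrong: what is needed, and what genuinely uses the homomorphism hypothesis, is that the canonical unital extension does not increase the norm. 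A smaller slip: $1+t^2\|a\|^2$ is not equal to $1+t^2\|T(a)\|^2$; since $\|T(a)\|\le\|a\|$ the bound you obtain points the wrong way for the literal metric criterion. This is repairable, because accretivity of $x$ only requires $\limsup_{t\to 0^+}\bigl(\|1-tx\|-1\bigr)/t\le 0$, for which $\|1-tT(a)\|\le 1+t^2\|a\|^2$ suffices, but as written the displayed equality is false. With the final paragraph deleted and Meyer's theorem cited for the contractivity of $T^1$, your argument is complete and coincides with the standard proof.
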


\begin{theorem} \label{extn}   Let $A$ and $B$ be approximately unital (Jordan) 
operator algebras, and write
$A^1$ for a (unique, as we said at the start of 
Section {\rm \ref{sgr}}) unitization of $A$.
If $A$ is unital choose  $A^1 = A \oplus^\infty \Cdb$ as usual (we do not necessarily make this requirement on $B$).
A real positive contractive  linear map  $T : A \to B$ extends to a unital real
 positive contractive linear map from $A^1$ to $B^1$. 
\end{theorem}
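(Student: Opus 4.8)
The natural approach is to define the extension by $\tilde T(a + \lambda 1) = T(a) + \lambda 1_{B^1}$ for $a \in A$, $\lambda \in \Cdb$, where $1 = 1_{A^1}$; this is clearly the unique unital linear extension, so the only content is that $\tilde T$ is contractive and real positive. Real positivity is the easier half: if $x = a + \lambda 1 \in {\mathfrak r}_{A^1}$, one checks that $\operatorname{Re}\lambda \ge 0$ (apply a state of $B^1$ localized near the identity, or use that $x + x^* \ge 0$ forces $2\operatorname{Re}\lambda = $ the scalar part of a positive element to be $\ge 0$); then $\tilde T(x) + \tilde T(x)^* = (T(a) + T(a)^*) + 2\operatorname{Re}\lambda\, 1_{B^1}$, and one wants this $\ge 0$ in $B^1$. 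Since $T$ is real positive and contractive, $\tilde T(x)$ is real positive provided $\tilde T$ is contractive and $x/\|x\|$ lies in (a suitable multiple of) ${\mathfrak F}_{A^1}$; more directly, once contractivity of $\tilde T$ is in hand, real positivity of $\tilde T$ follows from Proposition~\ref{onladdn}, since $\tilde T$ is then a unital contraction between unital Jordan operator algebras, hence real positive. So the crux reduces entirely to showing $\|\tilde T\| \le 1$.

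For contractivity I would use the description of the norm on the unitization together with the hypothesis that $T$ is real positive. The key estimate is: for $a \in A$ and $\lambda \in \Cdb$,
$$\|T(a) + \lambda 1_{B^1}\|_{B^1} \le \|a + \lambda 1_{A^1}\|_{A^1}.$$
The plan is to pass to biduals: $A^{**}$ and $B^{**}$ are unital Jordan operator algebras (with identities $e_A, e_B$, the weak* limits of the cai's), $T^{**}$ is weak* continuous and still real positive and contractive, and $T^{**}(e_A)$ is a real positive element of $B^{**}$ with $\|T^{**}(e_A)\| \le 1$ by Proposition~\ref{normpo}. One then realizes $\tilde T^{**}$ as the map $x \mapsto T^{**}(x) + (\text{scalar part})\cdot(e_B - T^{**}(e_A)\,?)$ — more carefully, $\tilde T$ should be compared with the map sending $a + \lambda 1 \mapsto T^{**}(a + \lambda e_A) + \lambda(1_{B^1} - e_B)$ inside $(B^{**})^1 = B^{**} \oplus^\infty \Cdb(1 - e_B)$, i.e. one splits off the unital part. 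The point is that $a + \lambda 1_{A^1}$, viewed in $A^{**} \oplus^\infty \Cdb$, has norm $\max(\|a + \lambda e_A\|_{A^{**}}, |\lambda|)$, and similarly downstairs; since $\|T^{**}(a + \lambda e_A)\| \le \|a + \lambda e_A\|$ and $|\lambda| \le |\lambda|$, the bound follows componentwise.

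The main obstacle is the interaction between the two unitizations: $A^1$ need not be $A \oplus^\infty \Cdb$ when $A$ is already unital (where one imposes $A^1 = A \oplus^\infty \Cdb$ artificially), and when $A$ is nonunital the Meyer unitization norm is not simply a sup of a norm on $A$ and $|\lambda|$ — rather $\|a + \lambda 1_{A^1}\| = \sup\{\|\pi(a) + \lambda I\| : \pi \text{ a (Jordan) rep of } A\}$. So the clean "$\oplus^\infty$" picture is only available after passing to $A^{**}$, where $e_A$ is a genuine identity and $A^1$ sits isometrically inside $A^{**} \oplus^\infty \Cdb$; one must quote from \cite{BWj} (Section 2.2, cited in the excerpt) that the unitization is unique up to isometric Jordan isomorphism and that this bidual identification is isometric. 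Handling the unital case of $A$ separately — where $\lambda$ ranges over the extra copy of $\Cdb$ and the estimate becomes $\|T(a + \mu 1_A) + \nu 1\| \le \max(\|a + \mu 1_A\|, |\nu|)$ using $\|T(a+\mu 1_A)\| \le \|a + \mu 1_A\|$ directly — is routine once the general framework is set up. Finally, one confirms $\tilde T$ is unital by construction and real positive via Proposition~\ref{onladdn} as noted above, completing the proof.
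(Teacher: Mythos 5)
Your reduction of real positivity to contractivity (via Proposition \ref{onladdn}, once $\tilde{T}$ is known to be a unital contraction) is fine, but the contractivity argument has a genuine gap at its central step --- precisely the point your own question mark at ``$e_B - T^{**}(e_A)\,?$'' flags and never resolves. What must be shown is $\| T(a) + \lambda 1_{B^1}\| \le \|a + \lambda 1_{A^1}\|$, and (say with $B$ nonunital) the first coordinate of $T(a)+\lambda 1_{B^1}$ in $B^{**}\oplus^\infty \Cdb(1_{B^1}-e_B)$ is $T(a)+\lambda e_B$, not $T(a)+\lambda T^{**}(e_A)$. The componentwise estimate $\|T^{**}(a+\lambda e_A)\|\le \|a+\lambda e_A\|$ controls only the latter; the two differ by $\lambda(e_B - T^{**}(e_A))$, and $T^{**}(e_A)$ is merely some real positive contraction in $B^{**}$ (Proposition \ref{normpo}), not $e_B$. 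So your argument establishes contractivity of the map $a+\lambda 1\mapsto T^{**}(a+\lambda e_A)+\lambda(1_{B^1}-e_B)$, which is neither unital nor equal to $\tilde{T}$. The gap is fatal rather than cosmetic, because your norm estimate nowhere actually uses real positivity of $T$, while the conclusion is false without it: take $A=B=\Cdb$ and $T=-\mathrm{id}$, which is contractive but not real positive. Then $e_A=e_B=1$, $T^{**}(e_A)=-1$, your surrogate map sends $a+\lambda 1$ to $(-a-\lambda,\lambda)\in\Cdb\oplus^\infty\Cdb$ and is isometric, yet $\tilde{T}$ sends the norm-one element $(-1)+1\cdot 1_{A^1}=(0,1)$ to $(2,1)$, of norm $2$. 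Any correct proof must make the real positivity of $T$ do work in the norm inequality itself; the same defect recurs in the unital case you dismiss as ``routine.''

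This is also where your route genuinely diverges from the paper's. As stated immediately after the theorem, the paper's proof hinges on the order-theoretic cofinality of $A$ in its generated $C^*$-algebra (Corollary \ref{cofi}), which is how real positivity enters the estimate of $\|T(a)+\lambda 1\|$; the bidual bookkeeping you set up is a reasonable way to organize the unital versus nonunital cases and to identify $A^1$ inside $A^{**}$, but it cannot substitute for that step.
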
 
 
Thus real positive contractions from  $A$ to $B$ are just the restrictions of unital contractions  from $A^1$ to $B^1$.   This `is a theorem' because it seems quite nontrivial.
Indeed our proof uses the earlier  nontrivial `order 
theoretic' fact of cofinality in
Corollary \ref{cofi}. 
Moreover this theorem is foundational and seems to be
exceedingly useful, very often permitting 
a `reduction to the unital case'.   This trick may not have been noticed 
for example in the Jordan $C^*$-algebra literature, where sometimes complicated
new arguments are used to deal with approximately unital algebras rather than a 
quick appeal to the unital case.

%\medskip

\begin{remark}  The last theorem is not true in general
 if $A$  is not approximately unital.
For example, consider $A_0(\Ddb)$, the continuous functions on the disk that are analytic inside the disk
and which vanish at $0$. 
If Re $f \geq 0$ and $f(0) = 0$ then Re $f$ is identically $0$ by the maximum modulus theorem for harmonic
functions.  Hence $f$ is constant and zero.
Thus the map $f \mapsto f'(0)$ is  trivially real positive, and it is a contraction by the Schwarz inequality.  However, the unital extension of the latter map
is not a contraction, or equivalently is not positive.   Indeed the states on $A(\Ddb) = A_0(\Ddb)^1$ are integrals against probability measures $\mu$ on the circle.
Nonetheless, the existence of a
positive measure  $\mu$ on the circle with  $\int_{\Tdb} \, z^n \, d \mu = 0$ for $n \geq 2$, and $\int_{\Tdb} \, z \, d \mu = 1$,
is ruled out by the solution to the well known trigonometric moment problem, since e.g.\ the $3 \times 3$ Toeplitz matrix whose entries are all  $1$ except
for zeroes in the $1$-$3$ and $3$-$1$ entries is not positive.
Thus the theorem fails in this case.   
\end{remark} 
%\bigskip

The following result, whose proof relies in an interesting way on 
Theorem \ref{extn}, is useful
for questions about real positivity because it  shows that we can
often get away with
 working with
the simpler set ${\mathfrak F}_A = \{ x \in A : \| 1 - x \| \leq 1 \}$.

\begin{corollary}   A linear map $T : A \to B$ between approximately unital
(Jordan) operator algebras
is real positive and contractive if and only if $T({\mathfrak F}_A) \subset {\mathfrak F}_B$.
\end{corollary}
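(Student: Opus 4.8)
The plan is to prove each direction by reducing to the previously established Theorem \ref{extn} together with the definition of ${\mathfrak F}_A$ in terms of a metric ball centered at the unit.

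First I would handle the easy direction. Suppose $T$ is real positive and contractive. By Theorem \ref{extn}, $T$ extends to a \emph{unital} contractive linear map $\tilde T : A^1 \to B^1$. If $x \in {\mathfrak F}_A$, then $\|1 - x\| \leq 1$ in $A^1$, and applying the unital contraction $\tilde T$ gives $\|1 - T(x)\| = \|\tilde T(1 - x)\| \leq \|1 - x\| \leq 1$, so $T(x) \in {\mathfrak F}_B \subset B^1$. But $T(x) \in B$, so in fact $T(x) \in {\mathfrak F}_B$ (the unit in ${\mathfrak F}_B$ is the one in $B^1$, and this set is intrinsic as noted in the excerpt). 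Hence $T({\mathfrak F}_A) \subset {\mathfrak F}_B$.

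For the converse, assume $T({\mathfrak F}_A) \subset {\mathfrak F}_B$. The key observation is that ${\mathfrak r}_A = \overline{\Rdb_+ {\mathfrak F}_A}$ by Proposition \ref{2co}. Since $T$ is linear, $T(\Rdb_+ {\mathfrak F}_A) = \Rdb_+ T({\mathfrak F}_A) \subset \Rdb_+ {\mathfrak F}_B$. To conclude $T({\mathfrak r}_A) \subset {\mathfrak r}_B$ I first need $T$ to be bounded, so that it is continuous and respects the closure: this follows because $T({\mathfrak F}_A) \subset {\mathfrak F}_B$ forces $\|T(x)\| \leq \|1 - T(x)\| + 1 \leq 2$ for $x \in {\mathfrak F}_A$, and since ${\mathfrak F}_A$ contains a ball around $0$ of positive radius in ${\mathfrak r}_A - {\mathfrak r}_A$... more carefully, $\frac{1}{2}{\mathfrak F}_A$ absorbs a neighborhood of $0$ in $A$ when $A$ is approximately unital (this is essentially Theorem \ref{orderth} and the remark following it, that elements may be chosen in $\frac{1}{2}{\mathfrak F}_A$, giving $A = \Rdb_+{\mathfrak F}_A - \Rdb_+{\mathfrak F}_A$ with control on norms), whence $T$ is bounded. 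Then $T$ maps $\overline{\Rdb_+ {\mathfrak F}_A} = {\mathfrak r}_A$ into $\overline{\Rdb_+ {\mathfrak F}_B} \subset \overline{{\mathfrak r}_B} = {\mathfrak r}_B$ (the cone ${\mathfrak r}_B$ is closed). So $T$ is real positive. For contractivity: given $x \in A$ with $\|x\| < 1$, by Theorem \ref{orderth}(4) applied in $A$ there is a real positive contraction $a$ with $-a \preccurlyeq x \preccurlyeq a$, and one can arrange $a \in \frac{1}{2}{\mathfrak F}_A$... alternatively, and more cleanly, I would instead note that $T({\mathfrak F}_A) \subset {\mathfrak F}_B$ together with the structure of ${\mathfrak F}$ as a "ball of radius $1$ about $1$" lets one run the extension argument in reverse: define $\tilde T: A^1 \to B^1$ by $\tilde T(a + \lambda 1) = T(a) + \lambda 1$ and check it is contractive using that $\|a + \lambda 1\| < 1$ in $A^1$ is controlled by membership of suitable translates/dilates in ${\mathfrak F}_A$. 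Restricting a unital contraction $A^1 \to B^1$ to $A$ yields a contraction, so $\|T\| \leq 1$.

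The main obstacle I anticipate is the converse direction's contractivity claim, specifically showing that the hypothesis $T({\mathfrak F}_A)\subset {\mathfrak F}_B$ (a statement only about a translated ball) upgrades to genuine norm contractivity of $T$ on all of $A$. The cleanest route is to show $\tilde T$ as defined above is a unital contraction from $A^1$ to $B^1$ and then invoke Theorem \ref{extn}'s converse content (restriction of a unital contraction is real positive and contractive), so that the real work is the norm estimate $\|\tilde T\| \leq 1$. This in turn should reduce, via the numerical-range/Russo--Dye type characterization of the norm on a unital operator algebra by the closed convex hull of $\{1 - x : x \in {\mathfrak F}\}$ and its unitary-like generators, to the hypothesis on ${\mathfrak F}_A$; one must be a little careful in the Jordan case, but Meyer's unitization theorem quoted in Section \ref{sgr} makes $A^1$ well-behaved. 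I would make sure to state explicitly which form of Theorem \ref{orderth} (the "nearly positive, in $\frac{1}{2}{\mathfrak F}_A$" addendum) is being used, since that is what pins down the quantitative control needed to pass from the ${\mathfrak F}$-condition back to contractivity.
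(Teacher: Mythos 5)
Your forward direction is correct and is precisely the intended use of Theorem \ref{extn}: extend $T$ to a unital contraction $\tilde T : A^1 \to B^1$ and compute $\|1 - T(x)\| = \|\tilde T(1-x)\| \leq \|1-x\| \leq 1$ for $x \in {\mathfrak F}_A$. (The paper does not reproduce the proof, citing it from elsewhere with the remark that it ``relies in an interesting way on Theorem \ref{extn}''; this is that reliance.) In the converse, your route to boundedness (writing any $b$ in the open unit ball as $x-y$ with $x,y \in \frac{1}{2}{\mathfrak F}_A$ via Theorem \ref{orderth}(5) and the addendum after it, so $\|T(b)\| \leq 2$) and to real positivity (Proposition \ref{2co} plus continuity and the norm-closedness of ${\mathfrak r}_B$) is sound, though loosely phrased.

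The genuine gap is the contractivity claim in the converse, which you flag but do not close, and your first fallback rests on a false premise: an order bound $-a \preccurlyeq x \preccurlyeq a$ in the real positive ordering controls only ${\rm Re}\, x$, not $\|x\|$ (the ``imaginary part'' of $x$ is unconstrained), so Theorem \ref{orderth}(4) cannot convert an order bound into a norm bound for non-selfadjoint elements. Your second fallback --- defining $\tilde T(a+\lambda 1) = T(a)+\lambda 1$ and proving $\|\tilde T\|\leq 1$ by a Russo--Dye type argument --- re-asserts the nontrivial content of an extension theorem without proof: the hypothesis gives only $\tilde T({\mathfrak F}_A)\subset {\mathfrak F}_B$, not $\tilde T({\mathfrak F}_{A^1})\subset {\mathfrak F}_{B^1}$, and the classical Russo--Dye description of the unit ball fails for nonselfadjoint operator algebras. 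The step can be closed with tools already stated in the paper: once $T$ is bounded and real positive, Proposition \ref{normpo} gives $\|T\| = \sup_t \|T(e_t)\|$ for any (Jordan) cai, and Corollary \ref{pcai} supplies a cai of elements with $\|1-2e_t\|\leq 1$, i.e. $2e_t \in {\mathfrak F}_A$; then $2T(e_t) \in {\mathfrak F}_B$ forces $\|T(e_t)\|\leq 1$, whence $\|T\|\leq 1$. (In the unital case one can argue even more directly from $1\pm x \in {\mathfrak F}_A$ for $\|x\|\leq 1$.)
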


\begin{corollary} Let $M$ be a unital weak* closed operator
algebra, $\Phi : M \to M$ be a weak* continuous real positive contraction, and let $M^\Phi$ be the weak* closed unital 
subspace of  fixed points of $\Phi$.   Then there exists a  real positive contractive projection on $M$ with range  $M^\Phi$.
\end{corollary}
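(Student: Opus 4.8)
The plan is to run the classical mean ergodic argument. Set $\Phi_n = \frac{1}{n} \sum_{k=1}^n \Phi^k$ for $n \geq 1$. Since $\Phi$ is a real positive contraction and $\mathfrak{r}_M$ is a convex cone carried into itself by $\Phi$, it is carried into itself by each power $\Phi^k$, hence by every convex combination of these; so each $\Phi_n$ is again a real positive contraction of $M$ (contractivity being immediate from convexity of the ball). I would then pass to a limit of the $\Phi_n$ in the point-weak* topology: by Alaoglu's theorem each ball $r\,\mathrm{ball}(M)$ is weak* compact, so by Tychonoff the set of contractions $M \to M$ is compact in the topology of pointwise weak* convergence, and therefore the sequence $(\Phi_n)$, regarded as a net, has a subnet $(\Phi_{n_\mu})$ converging in this topology to some contraction $P : M \to M$. (Equivalently one could define $Px$ directly via a Banach limit $\mathrm{LIM}_n$ of the bounded nets $\varphi \mapsto \langle \Phi_n x, \varphi\rangle$ over $\varphi$ in the predual, and check that this functional on $M_*$ is bounded by $\|x\|$, hence represented by an element $Px \in M$.) Since each $\Phi_n$ is contractive and the norm of the dual space $M$ is weak* lower semicontinuous, $P$ is contractive.

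Next I would identify $P$ as a projection onto $M^\Phi$. For the inclusion $\mathrm{ran}\, P \subseteq M^\Phi$: one computes $\Phi \circ \Phi_n - \Phi_n = \tfrac{1}{n}(\Phi^{n+1} - \Phi)$, which has norm at most $2/n$, so $\Phi(\Phi_n x) - \Phi_n x \to 0$ in norm for every $x \in M$; since $\Phi$ is weak* continuous and $\Phi_{n_\mu} x \to Px$ weak* along the subnet, passing to the limit gives $\Phi(Px) = Px$, i.e. $Px \in M^\Phi$. Conversely, if $x \in M^\Phi$ then $\Phi^k x = x$ for all $k$, hence $\Phi_n x = x$ for all $n$ and $Px = x$. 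Thus $P$ fixes $M^\Phi$ pointwise while $\mathrm{ran}\, P \subseteq M^\Phi$; this forces $P^2 = P$ and $\mathrm{ran}\, P = M^\Phi$.

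It remains to check that $P$ is real positive, for which I would use that $\mathfrak{r}_M$ is weak* closed. Representing $M$ weak*-homeomorphically and completely isometrically as a unital weak*-closed subalgebra of some $B(H)$, real positivity is intrinsic to the algebra (Section \ref{sgr}), so $\mathfrak{r}_M = M \cap \{T \in B(H) : T + T^* \geq 0\}$, the intersection of two weak*-closed sets (the second is even closed in the weak operator topology). Since each $\Phi_n$ maps $\mathfrak{r}_M$ into $\mathfrak{r}_M$, for $x \in \mathfrak{r}_M$ the element $Px$ is a weak* limit of elements $\Phi_{n_\mu} x$ of $\mathfrak{r}_M$, hence lies in $\mathfrak{r}_M$. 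So $P$ is a real positive contractive projection with range $M^\Phi$, as desired.

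The only real obstacle here is topological bookkeeping rather than substance: one must resist passing to a subsequence (only a subnet, or the Banach-limit construction, is available in a general dual space), and one needs both the weak* closedness of $\mathfrak{r}_M$ and the weak* continuity of $\Phi$ in order to push the fixed-point condition and the cone membership through the limit. Note that we make, and need, no claim that $P$ itself is weak* continuous — in general it is not — which is precisely why the argument is carried out at the level of the cones and the point-weak* topology instead of by manipulating preduals.
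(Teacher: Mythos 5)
Your argument is correct and is essentially the paper's own proof: the paper (in the proof of the subsequent corollary, following Effros--St{\o}rmer) likewise takes a weak* limit point of the Ces\`aro averages of powers of $\Phi$ in the unit ball of $B(M,M)=(M\hat{\otimes}M_*)^*$, whose weak* topology coincides on bounded sets with your point-weak* topology. The verification that the limit is a contractive projection onto $M^\Phi$, and the weak*-approximation argument showing real positivity survives the limit, are precisely the steps the paper leaves as an exercise.
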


The space $M^\Phi$ is sometimes called a {\em Poisson boundary}.  We assign  $M^\Phi$  the new product $\Phi(xy)$ (or $\Phi(x \circ y)$ in the Jordan case).
It follows from the idea in the next proof and results such as Theorems \ref{cdsproj} and \ref{ipscor} 
and others in Section \ref{cp} below that  $M^\Phi$  
is typically a Jordan operator algebra.    Conversely, it may be an interesting question as to whether all
Jordan operator algebras arise in this way as fixed points of real positive contractions.  

\begin{corollary}   \label{Poisson} Let  $M$ be a unital weak* closed operator
algebra (resp.\ Jordan operator algebra), and let 
$\Phi : M \to M$ be a weak* continuous real completely positive complete contraction.  Then  there exists a real completely positive 
completely contractive projection on $M$ with range  $M^\Phi$, and $M^\Phi$ with its new product is a unital dual operator algebra in the sense of \cite[Section 2.7]{BLM} (resp.\ is 
a unital Jordan operator
algebra). \end{corollary}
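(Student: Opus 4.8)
The plan is to produce the projection by a Banach-limit averaging of the iterates $\Phi^n$, mimicking the classical construction for fixed-point projections of positive contractions (and its operator-space version in \cite{BNp}). First I would fix a Banach limit $\mathrm{LIM}$ on $\ell^\infty(\Ndb)$ and, for each $x \in M$, define $P(x)$ to be the weak* limit along this Banach limit of the Ces\`aro averages $\frac{1}{n}\sum_{k=0}^{n-1}\Phi^k(x)$; since $M$ has a predual and the averages lie in a fixed multiple of the unit ball (as $\Phi$, hence each $\Phi^k$, is a contraction), the weak* compactness of bounded sets guarantees this limit exists and gives a well-defined linear map $P : M \to M$. The standard translation-invariance argument for Banach limits shows $\Phi \circ P = P = P \circ \Phi$ and hence $P(x) = x$ for $x \in M^\Phi$ and $P^2 = P$, so $P$ is a projection with range exactly $M^\Phi$ (the inclusion $P(M) \subseteq M^\Phi$ being immediate from $\Phi \circ P = P$, and $M^\Phi \subseteq P(M)$ from $P|_{M^\Phi} = \mathrm{id}$).

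Next I would verify that $P$ inherits the positivity and contractivity hypotheses. Each $\Phi^k$ is a real completely positive complete contraction (these classes are closed under composition), so each Ces\`aro average is too; real complete positivity means $\mathfrak{r}_{M_n(M)}$ is preserved, and $\mathfrak{r}_{M_n(M)}$ is weak* closed, so the limit $P$ (applied entrywise on $M_n(M)$, noting the averaging commutes with the matrix amplification) preserves $\mathfrak{r}_{M_n(M)}$ as well; similarly the entrywise norm bound $\le 1$ passes to the weak* limit. Thus $P$ is a real completely positive completely contractive projection onto $M^\Phi$. One also needs $P$ weak* continuous, which follows because $\Phi$ is weak* continuous and $P$ is a weak* limit of weak* continuous maps that are uniformly bounded — more carefully, one checks weak* continuity on bounded sets using the predual and boundedness, which suffices by the Krein–Smulian theorem, or alternatively observes that $P$ is the bidual restriction argument applied to the predual contraction.

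Finally, to conclude that $M^\Phi$ with the new product $x \bullet y := \Phi(xy)$ (equivalently $P(xy)$, since $\Phi$ and $P$ agree on $M^\Phi$ and on the relevant products) is a unital dual operator algebra (resp.\ unital Jordan operator algebra), I would appeal to the real-completely-positive version of the Choi–Effros theorem for operator algebras — precisely the results referenced in the statement, Theorems \ref{cdsproj} and \ref{ipscor} and the surrounding material in Section \ref{cp}, which assert that the range of a real completely positive completely contractive projection on a (Jordan) operator algebra, equipped with the $P$-product, is again a (Jordan) operator algebra, with a dual-algebra structure when the ambient algebra is a dual operator algebra and $P$ is weak* continuous. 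The unitality is inherited from $M$: since $\Phi(1)$ is a real positive contraction fixed (one checks $\Phi(1) \in M^\Phi$ using that $1$ is, or rather that $P(1)$ is the unit of the new product and $\|P(1)\| \le 1$ forces $P(1) = 1$ by a standard argument, e.g.\ because $1 - P(1) \in \mathfrak{F}_M$ is real positive of norm $\le 1$ yet $P(1-P(1)) = 0$). The main obstacle I expect is the weak* continuity of $P$ together with the verification that the needed Choi–Effros-type structure theorem is genuinely available in the weak* (dual operator algebra) category with all the bookkeeping about the $P$-product on the predual level — the rest is a routine assembly of the averaging construction and the cited structure results.
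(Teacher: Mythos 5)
Your construction is essentially the paper's: the paper likewise obtains $P$ as a weak* limit of averages of powers of $\Phi$ (taken in the unit ball of $CB(M,M) = (M \hat{\otimes} M_*)^*$, following Effros--St{\o}rmer), notes that real complete positivity and complete contractivity survive the weak* limit, and then invokes the completely contractive Choi--Effros-type theorem; your Banach-limit/Ces\`aro variant is an equivalent way of extracting the same limit point. Two corrections, however. First, the structure theorem you should cite is Theorem \ref{crpproj} (i.e.\ \cite[Theorem 2.5]{BNp} and \cite[Corollary 4.18]{BNjp}), which is exactly the completely contractive completely real positive case; Theorems \ref{cdsproj} and \ref{ipscor} concern merely contractive real positive projections and carry extra hypotheses (on ${\rm Ker}(P)$, or $P(A) \subset \Delta(A)$) that you have not verified and do not hold here in general.

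Second, and more substantively, your parenthetical argument that $P(1)=1$ fails: $\Phi$ is not assumed unital, so $1$ need not lie in $M^\Phi$, and $P(1-P(1))=0$ does not force $1-P(1)=0$ (a real positive contractive projection is far from injective on $\mathfrak{F}_M$; consider compression to a corner). Fortunately $P(1)=1$ is not needed --- the cited theorem already gives that $P(1)$ is the identity for the $P$-product, which is all that ``unital'' means here. Likewise the weak* continuity of $P$, which you single out as the main obstacle, is neither asserted in the statement nor used in the paper's proof: the dual operator algebra structure on $M^\Phi$ comes instead from the observation that $M^\Phi$, being the fixed-point set of the weak* continuous map $\Phi$, is weak* closed in $M$ and hence a dual operator space, so that \cite[Theorem 2.7.9]{BLM} applies to the unital operator algebra $(M^\Phi, P(\cdot \, \cdot))$. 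With these repairs your argument matches the paper's.
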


\begin{proof}   
One may follow the proof in \cite[Corollary 1.6]{ES}, taking weak* limits in  the unit ball 
of $CB(M,M) = (M \hat{\otimes} M_*)^*$ of averages of powers of $\Phi$.  One obtains a completely contractive projection $P$ on $M$ with range  $M^\Phi$.
It is an exercise in weak* approximation that $P$ is real positive, and similarly it is real completely positive.
If $M$ is a unital operator
algebra then  $M^\Phi$ is an operator
algebra in the $P$-product by \cite[Theorem 2.5]{BNp}, with identity $P(1)$.    
Since $M^\Phi$ consists of the fixed points of $\Phi$ it is weak* closed.
Hence it is a dual operator space, thus  a  dual operator algebra in the sense of \cite[Section 2.7]{BLM}, by Theorem 2.7.9 in that reference.
The Jordan case follows similarly from 
\cite[Theorem 4.18]{BNjp}.    \end{proof}

A similar result holds for weak* continuous complete contractions using Theorem \ref{ccproj} (1) in the proof in place of \cite[Theorem 2.5]{BNp}.

The following is a nonselfadjoint analogue of the well known fact that
the positive part of the kernel of a
 positive map $T$ on a $C^*$-algebra $B$ has the following `ideal-like' property:
$$T(xy) =  T(yx) = 0 , \qquad y \in {\rm Ker}(T)_+ , \; \, x \in B.$$
  Note that the entire kernel is rarely an ideal.

  \begin{lemma} Suppose that  $A$ is an approximately unital  operator algebra
and that $T : A \to B(H)$ is a real positive map on $A$.  If $x \in A$ and $y \in {\mathfrak r}_{A} \cap {\rm Ker}(T)$  then $xy$ and $yx$ are in ${\rm Ker}(T)$. \end{lemma}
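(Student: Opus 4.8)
The plan is to reduce to the $C^*$-algebraic ideal-like fact quoted just above the lemma, using the Stinespring-type extension result (Theorem \ref{stine2}). First I would apply Theorem \ref{extn} to reduce to the unital case: replacing $A$ by $A^1$ and $T$ by its unital real positive extension $\tilde T : A^1 \to B(H)^1$ if necessary, one checks that it suffices to treat a unital real positive $T$, since membership of $xy$ and $yx$ in $\mathrm{Ker}(T)$ is unaffected by adjoining a unit (the new unit is not in the kernel, but the products $xy$, $yx$ with $x \in A$, $y \in A$ are unchanged). Actually, a cleaner route avoiding even that: since $A$ is approximately unital, $T$ is bounded (Proposition \ref{normpo}), and by Theorem \ref{stine2} $T$ extends to a completely positive map $\tilde T : C^*(A) \to B(H)$; here $C^*(A) \supseteq A$ and $A$ generates it.

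The key step is then the following. We have $y \in {\mathfrak r}_A$ with $T(y) = 0$; write $y = a + ib$ with $a = \mathrm{Re}\, y \geq 0$ in $C^*(A)$. Since $\tilde T$ is completely positive (hence positive) and $\tilde T(y) = T(y) = 0$, taking real parts gives $\tilde T(a) = \mathrm{Re}\, \tilde T(y) = 0$, so $a$ is a positive element of $\mathrm{Ker}(\tilde T)$. The classical fact quoted in the paper (for the positive map $\tilde T$ on the $C^*$-algebra $C^*(A)$) then gives $\tilde T(za) = \tilde T(az) = 0$ for every $z \in C^*(A)$; combined with the Cauchy--Schwarz inequality for completely positive maps, $\|\tilde T(zw)\|^2 \leq \|\tilde T(z z^*)\|\,\|\tilde T(w^* w)\|$ type estimates, one deduces $\tilde T(zy) = \tilde T(yz) = 0$ as well, since $y - a = ib$ lies in the span of positive elements of the kernel (indeed $b = \frac{1}{i}(y - a)$ and $\mathrm{Re}(iy) = -b$... one has to be a little careful, but $\mathrm{Ker}(\tilde T)$ is a closed subspace and the positive elements $a$ and the analogous real part from $-iy$ — no: $\mathrm{Re}(-iy) = b$ need not be positive). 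The robust way is: $\tilde T(a) = 0$ with $a \geq 0$ forces, by the $C^*$ fact, $\tilde T(\cdot\, a) = \tilde T(a\, \cdot) = 0$; and writing $y = a + ib$, one still needs to kill the $b$-part, which one does by noting $\|\tilde T((x - \lambda)(y))\| $ estimates via the Schwarz inequality applied to $y^* a^{-\text{ish}}$...

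The cleanest formulation, which I would actually carry out, is this: by the $C^*$-algebra Cauchy--Schwarz inequality for the positive map $\tilde T$, for any $z \in C^*(A)$,
$$\|\tilde T(zy)\|^2 \leq \|\tilde T(1)\| \, \|\tilde T(y^* z^* z y)\| \leq \|z\|^2 \, \|\tilde T(y^* y)\|,$$
so it suffices to show $\tilde T(y^* y) = 0$; but $y^* y \leq \|y\|\, (y^* + y)/\|y\| \cdot \|y\|$ is not quite right either. Instead use: $0 \leq y^* y$ and $\tilde T(y^*y)$ — hmm. The genuinely clean step is $\tilde T(a) = 0$, $a \geq 0$ $\Rightarrow$ $\tilde T(a^{1/2} z a^{1/2}) = 0$ for all $z$, hence (positivity, $a^{1/2} \in C^*(A)$) $\tilde T$ vanishes on the hereditary subalgebra generated by $a$; and $2a = y + y^*$ lies there... no, $y$ itself need not. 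I expect the real obstacle is exactly this: \emph{passing from $\tilde T(\mathrm{Re}\, y) = 0$ to $\tilde T(y) = 0$ "with products"}, i.e.\ controlling the imaginary part of $y$. I would handle it by the standard $C^*$ argument: from $\tilde T(a) = 0$ with $a \geq 0$ one gets $\tilde T(b^* a b) = 0$ for all $b$, hence by Cauchy--Schwarz $\tilde T(b^* a^{1/2} \cdot a^{1/2} c) = 0$, so $\tilde T$ annihilates $a^{1/2} C^*(A) a^{1/2}$; since $y \in {\mathfrak r}_A$ with $\mathrm{Re}\,y = a$, one has $y^* y \leq \lambda (a + \text{bdd})$ — and in fact $y (1 - \epsilon a)$-type approximations show $y$ and $y^*$ lie in the closure of $a^{1/2} C^*(A) a^{1/2} + (\text{kernel stuff})$; more simply, for accretive $y$ one has the polar-decomposition-free estimate $y^* y \leq \|y\| (y + y^*) $ is \emph{false}, but $\|y^*y\| \leq \|y\|^2$ and $y = \lim y(1+\tfrac1n a)^{-1}a(\cdots)$... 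I will instead invoke the known fact that for an accretive $y$, $\overline{y\, C^*(A)} = \overline{a^{1/2} C^*(A)}$ and similarly on the other side, which is standard (the range projections of $y$ and of $a = \mathrm{Re}\,y$ coincide when... — no).

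Given the uncertainty above, the honest summary is: the strategy is (i) extend $T$ to a completely positive $\tilde T$ on $C^*(A)$ via Theorem \ref{stine2}; (ii) observe $\mathrm{Re}\,y$ is a positive element of $\mathrm{Ker}(\tilde T)$; (iii) apply the quoted $C^*$-algebra ideal property plus the Schwarz inequality to conclude $\tilde T(zy) = \tilde T(yz) = 0$ for all $z \in C^*(A)$; (iv) restrict to $z = x \in A$. I expect step (iii) — bridging from the positive part $\mathrm{Re}\,y$ of the kernel to all of $y$ — to be the main technical point, resolved by the standard observation that an accretive $y$ satisfies $y = \lim_{n} y (1 - \frac{1}{n} + \frac{1}{n}\mathrm{Re}\,y)$... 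I will spell out the correct approximation: for accretive $y$, $e_n := (\mathrm{Re}\,y)(\frac1n + \mathrm{Re}\,y)^{-1} \to$ support projection, and $y e_n \to y$, with $y e_n \in \overline{(\mathrm{Re}\,y)^{1/2}\,C^*(A)}$, so $\tilde T(z\, y e_n) = 0$ by (ii)+(iii), and pass to the limit.
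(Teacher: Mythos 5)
The survey states this lemma without proof (it is quoted from \cite{BNjp}), so there is no in-paper argument to compare against; but your proposal has two gaps, one repairable and one that I believe cannot be repaired for the statement as literally written. The repairable one: Theorem \ref{stine2} yields a completely positive extension $\tilde T$ on $C^*(A)$ only when $T$ is real \emph{completely} positive, whereas the lemma assumes only real positivity, and the two notions differ for $B(H)$-valued maps (see the example $P(a,b)=(a,a^{\intercal}/2)$ in the remark after Theorem \ref{ipscor2}). The standard repair is to reduce to functionals first: for every state $\psi$ on $B(H)$ the functional $\psi\circ T$ is real positive, hence real completely positive, hence extends to a positive multiple $\tilde\varphi$ of a state on $C^*(A)$; since states separate points of $B(H)$, it suffices to prove $\tilde\varphi(xy)=\tilde\varphi(yx)=0$.

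The second gap is precisely the step you flag as the main technical point, and it is fatal. The closing approximation is wrong: with $e_n=({\rm Re}\,y)(\tfrac1n+{\rm Re}\,y)^{-1}$ one has $ye_n\to ys$ only for the support projection $s$ of ${\rm Re}\,y$, and $ys\neq y$ for accretive $y$ in general (if $y$ is skew-adjoint then $e_n=0$, so $ye_n=0\not\to y$). No argument can bridge this, because the conclusion fails for general $y\in{\mathfrak r}_A$: take $A=M_2$, $T=\tau$ the normalized trace (positive, hence real positive), $y={\rm diag}(i,-i)\in{\mathfrak r}_{M_2}\cap{\rm Ker}(\tau)$, and $x={\rm diag}(1,-1)$; then $xy={\rm diag}(i,i)$ and $\tau(xy)=i\neq 0$. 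The hypothesis should therefore be $y\in\Rdb_+{\mathfrak F}_A\cap{\rm Ker}(T)$ rather than $y\in{\mathfrak r}_A\cap{\rm Ker}(T)$; Proposition \ref{2co} gives only ${\mathfrak r}_A=\overline{\Rdb_+{\mathfrak F}_A}$, and the approximants cannot be kept inside ${\rm Ker}(T)$, so the two versions are genuinely different. For $y\in{\mathfrak F}_A$ the argument closes cleanly and needs neither ${\rm Re}\,y$ nor hereditary subalgebras: $\|1-y\|\le 1$ gives $y^*y\le y+y^*$ and $yy^*\le y+y^*$ in $C^*(A)^1$, so $\tilde\varphi(y)=0$ forces $\tilde\varphi(y^*y)=\tilde\varphi(yy^*)=0$, and the Cauchy--Schwarz inequality $|\tilde\varphi(xy)|^2\le\tilde\varphi(xx^*)\,\tilde\varphi(y^*y)$ (and its mirror image) finishes.
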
 

The following, which is needed later e.g.\ for Theorems \ref{cdsproj} and \ref{bicr}, is a sample corollary of this:

\begin{corollary}   If $A$ is an approximately unital Jordan operator algebra,  $T : A \to B(H)$ is real positive, and if $J = {\rm Ker}(T)$  is contained in the 
closed Jordan
 subalgebra generated by the real positive elements that $J$ contains,
  then ${\rm Ker}(T)$
is  an approximately unital Jordan ideal in $A$. \end{corollary}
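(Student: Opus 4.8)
The plan is to deduce this corollary from the preceding Lemma by a straightforward closure-and-span argument. Write $J = \mathrm{Ker}(T)$, let $J_+ = J \cap {\mathfrak r}_A$ be the real positive elements contained in $J$, and let $J_0$ be the closed Jordan subalgebra of $A$ generated by $J_+$. The hypothesis is precisely that $J \subseteq J_0$. First I would observe that $J$ is automatically a closed subspace (being the kernel of the bounded map $T$; boundedness of real positive maps on approximately unital Jordan operator algebras is Proposition \ref{normpo}). The real content is to show $J$ is a Jordan ideal, i.e.\ $a \circ x \in J$ whenever $a \in A$ and $x \in J$, and that $J$ has a (Jordan) cai.

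For the ideal property, the key step is to upgrade the Lemma. The Lemma as stated is for associative operator algebras and says $xy, yx \in \mathrm{Ker}(T)$ when $y \in {\mathfrak r}_A \cap \mathrm{Ker}(T)$; in particular $x \circ y = \frac12(xy+yx) \in \mathrm{Ker}(T)$, so $J_+$ is `Jordan-absorbing': $a \circ y \in J$ for all $a \in A$, $y \in J_+$. From this I would bootstrap to all of $J_0$ as follows. The set $N = \{ z \in A : a \circ z \in J \text{ for all } a \in A \}$ is a closed subspace of $A$ (closed because $J$ is closed and Jordan multiplication by a fixed $a$ is continuous), it contains $J_+$, and — using the Jordan identity relating $a \circ (b \circ z)$ to products of $b\circ z$, $a\circ z$, etc., or more directly the fact that in any Jordan operator algebra the Jordan-multiplier-type relations let one check $N$ is closed under the Jordan product of its own elements — one shows $N$ is itself a closed Jordan subalgebra. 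Since $N \supseteq J_+$, we get $N \supseteq J_0 \supseteq J$, so every element of $J$ is Jordan-absorbing: $A \circ J \subseteq J$. Thus $J$ is a closed Jordan ideal.

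For approximate unitality of the Jordan ideal $J$: the real positive elements in $J$ span a closed Jordan subalgebra $J_0 \supseteq J$, but also $J_0 \subseteq J$ trivially (each generator $J_+ \subseteq J$ and $J$ is a closed Jordan subalgebra, being a Jordan ideal), so in fact $J = J_0$ and the real positive elements of $J$ span $J$. By Theorem \ref{cairp} applied to the Jordan operator algebra $J$ (whose real positive cone is ${\mathfrak r}_J = {\mathfrak r}_A \cap J$, since real positivity is representation-independent), $J$ has a Jordan cai. Hence $J$ is an approximately unital Jordan ideal, as claimed.

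The main obstacle I anticipate is the algebraic step showing that the `Jordan-absorbing' set $N$ is closed under its own Jordan product — i.e.\ that absorbing elements form a subalgebra — since Jordan ideals are subtler than associative ideals and one cannot simply multiply. The cleanest route is probably to pass to $B(H)$ (or to $C^*(A)$) where $J$ sits inside, write $a \circ z = \frac12(az + za)$, and use the Lemma's full strength ($xy$ and $yx$ both land in $\mathrm{Ker}(T)$, not just their average) together with a linearization of the Jordan identity; alternatively one can invoke the theory of Jordan ideals in operator algebras from \cite{BWj,BNj} to shortcut this. A minor secondary point is confirming the Lemma (stated for operator algebras) is what gets used — here $A$ is only Jordan, so one may instead need the Jordan analogue of the Lemma, which follows by the same argument as the Lemma's proof or is implicitly available in the cited work; I would flag that the associative Lemma plus the standard embedding of a Jordan operator algebra argument suffices.
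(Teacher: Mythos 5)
The paper does not actually prove this corollary (it is quoted from \cite{BNjp} as ``a sample corollary'' of the preceding Lemma), so there is no in-paper argument to compare against; judged on its own terms, your proposal has a genuine gap at exactly the step you flag. You need the set $N=\{z\in A: a\circ z\in J \ \text{for all}\ a\in A\}$ to be closed under the Jordan product, and this is not a formal consequence of its definition: the linearization of the Jordan identity in a special Jordan algebra gives $(z\circ w)\circ a=(z\circ a)\circ w+(w\circ a)\circ z-\tfrac{1}{2}(zaw+waz)$, and membership of $z,w$ in $N$ puts the first two terms into $J$ but gives no control over the triple product $\tfrac{1}{2}(zaw+waz)$. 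So the bootstrap from $J_+$ to the generated subalgebra $J_0$ does not go through inside $A$. The missing ingredient is the extra strength in the \emph{proof} of the Lemma rather than its statement: passing to a positive extension $\tilde T:C^*(A)\to B(H)$ (Theorem \ref{stine2}), the set $L=\{b\in C^*(A): \langle \tilde T(b^*b)\xi,\xi\rangle=0 \ \text{for all}\ \xi\in H\}$ is a closed \emph{left ideal}, the relevant real positive elements of $J$ lie in $L\cap L^*$ (because $y^*y\le y+y^*$ and $yy^*\le y+y^*$ for $y\in {\mathfrak F}_A$), and $L\cap L^*$ is automatically a closed subalgebra of $C^*(A)$ all of whose left and right multiples are killed by $\tilde T$. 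That subalgebra swallows every Jordan polynomial in the generators, and the absorption $A\circ J\subseteq J$ then follows for all of $J_0\supseteq J$. Some such passage to $C^*(A)$, or an equivalent triple-product estimate, is unavoidable.

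Two further points. First, in the last step you pass from ``$J=J_0$'' to ``the real positive elements of $J$ span $J$'', but $J_0$ is \emph{generated}, not spanned, by $J_+$; to invoke Theorem \ref{cairp} you need the separate fact (from \cite{BRord,BWj}) that a closed Jordan subalgebra generated by real positive elements is approximately unital, after which the span statement follows, not before. Second, be careful which cone you feed into the Lemma. The Cauchy--Schwarz mechanism above works for $y\in \Rdb_+{\mathfrak F}_A\cap J$, but not for an arbitrary accretive $y\in{\mathfrak r}_A\cap J$: the approximants supplied by Proposition \ref{2co} leave the kernel. Indeed, for $T=\tfrac12{\rm tr}$ on $M_2$ (positive, hence real positive by Theorem \ref{stinei1}) the element $y=i\,{\rm diag}(1,-1)$ is accretive with $T(y)=0$, yet $T(xy)\ne 0$ for suitable $x$; moreover ${\mathfrak r}_{M_2}\cap \ker T$ consists of the skew-adjoint traceless matrices, whose generated Jordan subalgebra is all of $M_2$, while $\ker T$ is certainly not a Jordan ideal. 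So the statement cannot be used as a black box with ${\mathfrak r}_A$ read literally, and your final identification ${\mathfrak r}_J={\mathfrak r}_A\cap J$ inherits this fragility; the argument must be run with the subcone $\Rdb_+{\mathfrak F}_A\cap J$ throughout.
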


In fact one may replace `Jordan
 subalgebra' in the last result with `Jordan hereditary 
 subalgebra'.   A Jordan hereditary 
 subalgebra of a Jordan
operator algebra $A$ is a closed approximately unital Jordan subalgebra $D$ satisfying $dAd \subset D$ for all $d \in D$. 

The following result, whose proof benefits from some ideas from  Lemmas 2.8 and 4.6 in  \cite{BNjp}, will be also used later in Section \ref{cp}.   

\begin{lemma} \label{ishe3} Let $A$ be  a unital 
operator space (resp.\ approximately  unital Jordan
operator algebra),
and let  $T : A \to B(H)$ be a unital  (resp.\ real positive) contraction.
Suppose that $e$ is a projection in $A$ with $e \circ A \subset A$,
such that $q = T(e)$ is a projection in $B(H)$.   Then
$T(eae) = q T(a) q$ 
and  $T(a \circ e) =T(a) \circ q$ for all $a \in A$.     \end{lemma}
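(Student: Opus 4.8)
The plan is to reduce to a statement about a unital completely positive map on a $C^*$-algebra by invoking the unitization and extension machinery already developed in the excerpt. First, by Theorem \ref{extn}, the real positive contraction $T : A \to B(H)$ extends to a unital real positive contraction $\tilde T : A^1 \to B(H)^1 = B(H) \oplus^\infty \Cdb$ (or simply to a unital contraction when $A$ is already a unital operator space, by Proposition \ref{onladdn}); composing with the projection onto the $B(H)$ coordinate we may assume $T$ itself is unital and contractive on a unital Jordan operator space, with $T(1)$ a projection or the identity. Next, since $e$ is a projection with $e \circ A \subset A$, the element $e$ together with $1$ generates a copy of $\Cdb \oplus \Cdb$ inside $A^1$, and one checks that $T$ restricted to a suitable selfadjoint unital operator system is positive; this lets us regard the relevant restriction of $T$ as a unital contractive, hence (by Theorem \ref{stinei1} in the operator system case) positive, map. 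Then a standard multiplicative-domain / Schwarz-type argument applies: if $q = T(e)$ is a projection, then $e$ lies in the "multiplicative domain" of $T$ in the sense that $T$ behaves multiplicatively when one factor is $e$.

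The key computation I would carry out is the following. Since $q$ is a projection, $q$ and $1-q$ are positive with $q(1-q)=0$. Apply a Kadison-Schwarz-type inequality to the element $ae$ (and to $e a$, $e a e$): from $T$ being a unital contraction on the relevant operator system one gets $T(x)^* T(x) \le T(x^* x)$ for $x$ in the system when that makes sense, and more directly one uses that for a projection $e$ one has the $2\times 2$ matrix
\[
\begin{bmatrix} e & ea \\ 0 & 0 \end{bmatrix} \quad\text{or}\quad \begin{bmatrix} e & e \\ ea & ea \end{bmatrix}
\]
being handled by positivity of $T$ applied entrywise after symmetrizing. Concretely, the cleanest route: consider $b = eae - eae \circ e = 0$ trivially, so instead argue that $T(eae)$ sandwiched against $1-q$ vanishes. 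One shows $(1-q)T(eae)(1-q) = 0$ and $q T(eae)(1-q) = 0$ and the symmetric statement, using that $0 \le e \le 1$ forces, for the positive element $(1-q)$, the inequality $T(e a^* a e) \ge T(ea e)^* T(eae)$ type bounds together with $T(e)=q$; squeezing gives $T(eae) = q T(eae) q$. The remaining step, upgrading $q T(eae) q = q T(a) q$, comes from the first Lemma in this batch (the "ideal-like kernel" lemma) applied to $a - eae \in A$: one notes $a - a\circ e$ and related combinations lie in kernels of compressions, or more simply one repeats the squeeze with $e$ replaced by $1-e$ to get $T((1-e)a(1-e)) = (1-q)T(a)(1-q)$, $T(ea(1-e)) = q T(a)(1-q)$, etc., and adds up the four pieces. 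The Jordan identity $T(a\circ e) = T(a)\circ q$ then follows by averaging $T(eae)$-type identities or directly since $a \circ e = \frac12(ea + ae)$ and one has $T(ea) = qT(a)$, $T(ae) = T(a)q$ from the same compression identities.

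The main obstacle I anticipate is the nonselfadjoint (and nonassociative Jordan) setting: the usual multiplicative domain argument for completely positive maps on $C^*$-algebras leans on the full Schwarz inequality and on having honest adjoints, neither of which is directly available here. The workaround is exactly the one flagged in the excerpt — push everything up to $C^*(A)$: by Theorem \ref{stine2} (the Stinespring-type extension), a real completely positive complete contraction on an approximately unital Jordan operator algebra extends to a genuine completely positive map on $C^*(A)$, so if one first arranges complete positivity (or works with the hypotheses that already give it) the classical multiplicative-domain lemma of Choi applies verbatim to $e \in C^*(A)$ and then restricts back to $A$. For the merely contractive / merely real positive case as literally stated, the fix is to invoke Theorem \ref{extn} to get to the unital case, then use that a unital contraction from a unital operator space into $B(H)$ has a unital completely positive (hence Schwarz) extension to the generated operator system via Arveson's theorem — and a projection in the range of a unital completely positive map is in its multiplicative domain. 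Care is needed that the extension's multiplicative-domain identities, a priori statements in $B(K)$, restrict correctly to give the stated identities in $B(H)$; this is where one must be a little attentive, but it is the same bookkeeping as in Lemma 2.8 and Lemma 4.6 of \cite{BNjp} that the statement already credits.
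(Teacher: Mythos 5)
There is a genuine gap, and it sits exactly at the point your proof leans on hardest: the claim that ``a unital contraction from a unital operator space into $B(H)$ has a unital completely positive (hence Schwarz) extension to the generated operator system via Arveson's theorem.'' Arveson's extension theorem requires the map to be completely contractive (equivalently, to extend to a u.c.p.\ map); a unital contraction on a unital operator space extends to a unital \emph{positive} selfadjoint map on $A+A^*$, but not to a completely positive one in general, and a positive map on an operator system need not extend positively to $C^*(A)$. Since the whole point of this section is to drop the word ``completely,'' you cannot smuggle complete positivity back in this way. The same problem infects your intermediate steps: the Schwarz inequality $T(x)^*T(x)\le T(x^*x)$ needs $2$-positivity, and expressions like $T(ea^*ae)$ are not even defined, since $a^*$ need not lie in the (nonselfadjoint) domain. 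Choi's multiplicative-domain lemma, and even its Jordan variant for merely positive maps, is therefore not available here as stated. Your unitization step (via Theorem \ref{extn}) and your target -- showing $T(eae)$ is supported in the $q$-corner -- are both right, but the mechanism you propose for getting there does not apply under the stated hypotheses.

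The paper's proof reaches the corner-support statement by elementary means that avoid any Schwarz inequality. After unitizing so that $T(1)=1$, it first applies Lemma 2.8 of \cite{BNjp} to $S=qT(\cdot)q$ (which is real positive with $S(1)=q=S(e)$) to get $qT(a)q=qT(eae)q$. Then, for $a\in {\rm Ball}(A)$, it uses the norm identity $\|1-e\pm eae\|\le 1$ (the two summands live in orthogonal corners of $e$), whence $\|q^\perp\pm T(eae)\|\le 1$; compressing and using that $q^\perp$ is an extreme point of the ball of $q^\perp B(H)q^\perp$ forces $q^\perp T(eae)q^\perp=0$, and a $2\times 2$ matrix argument then kills the off-diagonal corners, giving $T(eae)=qT(eae)q=qT(a)q$. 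The Jordan identity follows from the analogous statement for $e^\perp$ and the identity $a\circ q=\frac12(a+qaq-q^\perp aq^\perp)$. If you want to repair your write-up, replace the Arveson/Choi machinery with a norm-and-extreme-point argument of this kind.
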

 
\begin{proof}     
By Theorem  \ref{extn} we can unitize if necessary.
So we may assume that $T(1) = 1$ (since $e \circ A^1 \subset A^1$).
Let $S = q T(\cdot) q$.
Then $S$ is real positive and $S(1) = q = S(e)$.   By Lemma
 2.8 in \cite{BNjp}, $S(a) = q T(a) q = q T(eae) q$. 

For any $a \in {\rm Ball}(A)$
we have $\| 1 - e \pm eae \| \leq 1$, so that 
$\| 1- q \pm T(eae)  \| \leq 1$.  Hence $\| q^\perp
\pm  T(eae)  \| \leq 1$.
Since $q^\perp$ is an extreme point of $q^\perp B(H) q^\perp$
we see that $q^\perp T(eae) q^\perp = 0$.
Looking at the matrix of $T(eae)$ with respect to $q^\perp$,
and using  $\| q^\perp
\pm q^\perp T(eae)  q^\perp \| \leq 1$, we also see that 
 $T(eae) = q T(eae) q$. So we have
proved that $T(eae) = q T(a) q$.    Similarly, $T(e^\perp ae^\perp) = q^\perp T(a) q^\perp$. 

The second identity follows from  the facts in the last line, and from the identity $a \circ q  =  
\frac{1}{2} (a + qaq - q^\perp a q^\perp ),$ and the similar identity with 
$q$ replaced by $e$.   
  \end{proof}
  
The last few results are a sample of recent  results on  real positive maps (mostly 
from \cite{BNjp}).  Almost all of these particular results  are ingredients
of proofs of results featured in the rest of the paper. 

\section{Quasimultipliers and a  Banach-Stone theorem} \label{qbs} 

There are very many
Banach-Stone type  theorems in the literature. 
For example, we already mentioned 
Kadison's result  
that surjective linear isometries between $C^*$-algebras are precisely the maps
$u \pi(\cdot)$ for a surjective Jordan $*$-isomorphism $\pi$  and unitary multiplier $u$ \cite{Kad}.
In particular linearly isometrically isomorphic  $C^*$-algebras are Jordan $*$-isomorphic.
By spectral theory (by a result of Harris \cite[Proposition 3.4]{Har2} if necessary,
or see e.g.\ Proposition 3.4.4 in \cite{Rod})
 one can see that  the converse is true, Jordan $*$-isomorphic $JC^*$-algebras
are isometrically isomorphic.

 \begin{theorem}  \label{AS} {\rm (Arazy-Solel) \cite{AS}} \ Surjective unital linear isometries between unital (Jordan)   operator algebras
are  Jordan homomorphisms. \end{theorem}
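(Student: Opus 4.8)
The plan is to reduce to the unital $C^*$-algebra case via the operator-algebraic analogue of spectral/functional calculus, exploiting the fact that a unital linear isometry $u : A \to B$ between unital operator algebras (or Jordan operator algebras) preserves much more structure than is apparent at first glance. The key observation is that such a $u$ is automatically a unital \emph{complete} isometry in the Banach--Mazur sense only if we work inside generated $C^*$-algebras; but even without complete isometry, a unital linear isometry is real positive by Proposition \ref{onladdn}, and — crucially — so is $u^{-1}$, since it too is a unital contraction (being isometric). So both $u$ and $u^{-1}$ are unital real positive maps, hence by Theorem \ref{stinei1}-type reasoning applied after passing to generated $C^*$-algebras, they respect the order structure in a strong way.

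First I would treat the case where $A$ and $B$ are actually unital $C^*$-algebras (or $JC^*$-algebras): here the result is exactly Kadison's Banach--Stone theorem combined with the fact recalled just before the statement that Jordan $*$-isomorphic $JC^*$-algebras are isometrically isomorphic — a unital surjective isometry is $u\pi(\cdot)$ with $u$ a unitary multiplier, and unitality forces $u = 1$, so $u = \pi$ is a Jordan $*$-isomorphism, in particular a Jordan homomorphism. The heart of the matter is bootstrapping from this selfadjoint case. For this I would use the standard device: pass to a $C^*$-algebra $C^*(A)$ generated by $A$ inside some $B(H)$, and similarly $C^*(B)$. Then I would try to extend $u$ to a unital complete isometry, or at least a unital complete positive map, on $C^*(A)$ — but this is not possible in general since $u$ need not be completely isometric. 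The correct move, following Arazy--Solel, is instead to work with the \emph{second dual} or with the bidual von Neumann algebras: $A^{**}$ sits inside $B(H)^{**} = $ a von Neumann algebra, and $u^{**} : A^{**} \to B^{**}$ is again a unital surjective weak* continuous isometry. One then identifies enough projections/partial isometries in $A^{**}$ (the algebra $A$ is unital, so $A^{**}$ has plenty of these) and shows that $u^{**}$ sends projections $e$ with $e \circ A^{**} \subset A^{**}$ to projections — this is where Lemma \ref{ishe3} becomes the engine: once $q = u^{**}(e)$ is a projection, $u^{**}(eae) = qu^{**}(a)q$ and $u^{**}(a \circ e) = u^{**}(a) \circ q$.

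The key steps, in order, would be: (1) reduce to the weak* closed unital case by taking second duals; (2) show that the range of a unital surjective isometry contains, and its inverse image of, sufficiently many projections — more precisely, show $u^{**}$ restricts to an isometry of the \emph{diagonal} $C^*$-algebras $\Delta(A^{**}) = A^{**} \cap (A^{**})^*$, using that a unital isometry must preserve the set of elements $x$ with $x$ and $1-x^*x$-type conditions, hence preserves unitaries and projections of the diagonal; (3) invoke Kadison's theorem on these diagonals to learn that $u$ is a Jordan $*$-isomorphism there; (4) use Lemma \ref{ishe3} and the multiplicativity of $u$ on the diagonal to spread the Jordan homomorphism property across all of $A$ by a density/approximation argument, writing a general product $a \circ b$ in terms of compressions by projections $eae$ and Jordan products $a \circ e$ that are controlled by the lemma. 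The main obstacle I anticipate is step (2): establishing that a merely \emph{isometric} (not completely isometric) unital surjection nonetheless carries the ``unitaries'' or projections of one algebra onto those of the other. This requires a genuinely Banach-space-geometric argument — identifying unitaries via extreme-point or norm conditions ($\|u\| = \|u^{-1}\| = 1$, $u$ unital, $u$ surjective) that are preserved by any unital surjective isometry — and it is exactly the technical core of the Arazy--Solel paper. Once that geometric fact is in hand, everything else is a structured but routine propagation using the already-established Lemma \ref{ishe3} and the classical $C^*$-theory.
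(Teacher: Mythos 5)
First, a point of comparison: the paper offers no proof of Theorem \ref{AS} at all --- it is quoted from Arazy--Solel \cite{AS} (with the Jordan variant from \cite{BWj}; see also \cite{Av}) --- so your proposal has to be measured against the argument of \cite{AS} itself. Your steps (1)--(3) are essentially sound, although the part you single out as the technical core (showing that a merely isometric unital surjection matches up unitaries/projections) is in fact the routine part. The clean device, which the paper itself uses in the proof of Lemma \ref{iskk}, is that the \emph{hermitian} elements of a unital Banach algebra (those with real numerical range) are determined by the norm and the identity alone, and for a unital (Jordan) operator algebra $A$ they are exactly $\Delta(A)_{\rm sa}$; hence any unital surjective isometry carries $\Delta(A)$ onto $\Delta(B)$ and restricts there to a Jordan $*$-isomorphism by Kadison's theorem \cite{Kad}. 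No extreme-point analysis is needed for that.

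The genuine gap is your step (4). Lemma \ref{ishe3} and its relatives only ever control $T(eae)$ and $T(a\circ e)$ for $e$ a projection coming from the diagonal, so the most this machinery can deliver is $T(a\circ d)=T(a)\circ T(d)$ when at least one factor $d$ lies in the span of the projections of $\Delta(A^{**})$. The theorem asserts Jordan multiplicativity for \emph{all} pairs $a,b\in A$, and the diagonal can be trivial: for the disk algebra one has $\Delta(A(\Ddb))=\Cdb 1$, and although $\Delta(A(\Ddb)^{**})$ does contain many projections, they sit in the singular corner of the bidual and compressing by them says nothing about whether $T(z^2)=T(z)^2$. There is no density or approximation argument that writes a general $a\circ b$ in terms of the quantities Lemma \ref{ishe3} controls. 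The missing idea --- the actual engine of \cite{AS} --- is holomorphic: the symmetric part of the Banach space $A$ (the orbit of $0$ under biholomorphic automorphisms of the open unit ball) is precisely $\Delta(A)$; it carries a partial Jordan triple product $\{x,a,y\}=\frac{1}{2}(xa^*y+ya^*x)$, defined for $a$ in the symmetric part and arbitrary $x,y\in A$; and every surjective linear isometry preserves this partial triple product, by the Kaup--Upmeier--Arazy theory of bounded symmetric domains. Since $1$ lies in the symmetric part and $T(1)=1$, one then gets $T(x\circ y)=T\{x,1,y\}=\{Tx,1,Ty\}=T(x)\circ T(y)$ in one line. Without this holomorphic input, or some genuine substitute for it, your outline does not close.
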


A variant of this due to Arazy where the isometry need not take $1$ to $1$ may be found in \cite[Theorem 3.1]{Av}.

\begin{theorem} \cite{BWj}  An isometric surjection $T$ between approximately unital (Jordan)  operator algebras is real
positive if and only if $T$ is a Jordan algebra homomorphism.
 \end{theorem}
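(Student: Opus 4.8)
The plan is to reduce to known Banach--Stone type results by a unitization trick, using Theorem~\ref{extn}. The backward direction is essentially Proposition~\ref{onladdn}: if $T$ is a surjective Jordan homomorphism between approximately unital (Jordan) operator algebras, then, being isometric, it is in particular a contractive Jordan homomorphism, hence real positive. (One should note that an isometric Jordan homomorphism is automatically contractive with contractive inverse, and Proposition~\ref{onladdn} applies to the contractive Jordan homomorphism $T$.) So the content is the forward direction: an isometric real positive surjection is a Jordan homomorphism.

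For the forward direction, suppose $T : A \to B$ is an isometric surjection which is real positive. First I would pass to unitizations. Since $T$ is a real positive contraction (it is isometric, hence contractive), Theorem~\ref{extn} gives a unital real positive contractive extension $T^1 : A^1 \to B^1$; applying the same to $T^{-1}$ (which is also an isometric real positive surjection, since an isometric surjection that maps $\mathfrak{r}_A$ onto a subset of $\mathfrak{r}_B$ and is onto must map $\mathfrak{r}_A$ onto $\mathfrak{r}_B$ — one checks surjectivity of the cone map using that $\mathfrak{r}_A = \overline{\mathbb{R}_+ \mathfrak{F}_A}$ from Proposition~\ref{2co} together with $T(\mathfrak{F}_A) = \mathfrak{F}_B$, which follows because $\mathfrak{F}_A = \{a : \|1-a\|\le 1\}$ is determined by the norm and the extension is unital) yields a unital real positive contractive extension $(T^{-1})^1 : B^1 \to A^1$. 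The key point to verify is that $(T^{-1})^1$ is the inverse of $T^1$: both are unital and extend genuinely inverse maps on the nonunital parts, so they compose to the identity on $A^1$ and on $B^1$. Hence $T^1$ is a \emph{surjective unital isometry} between unital (Jordan) operator algebras.

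Now Theorem~\ref{AS} (Arazy--Solel) applies directly to $T^1 : A^1 \to A^1$: a surjective unital linear isometry between unital (Jordan) operator algebras is a Jordan homomorphism. Therefore $T^1$ satisfies $T^1(a^2) = T^1(a)^2$ for all $a \in A^1$, and restricting to $A$ (which is a Jordan subalgebra of $A^1$ carried into $B \subset B^1$) shows $T(a^2) = T(a)^2$ for all $a \in A$, i.e.\ $T$ is a Jordan algebra homomorphism.

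The main obstacle, and the step deserving the most care, is establishing that $T^1$ is actually an isometry (not merely a contraction) and is surjective — equivalently, that the extension process applied to $T$ and to $T^{-1}$ produces mutually inverse maps. Theorem~\ref{extn} only guarantees a contractive extension, so one must argue that the extension is forced to be isometric: this comes from the uniqueness of the unitization up to isometric Jordan homomorphism (Meyer's theorem, quoted at the start of Section~\ref{sgr}) combined with the fact that $T^1$ has a contractive inverse $(T^{-1})^1$, so $\|x\| = \|(T^{-1})^1 T^1 x\| \le \|T^1 x\| \le \|x\|$. Verifying $(T^{-1})^1 \circ T^1 = \mathrm{id}_{A^1}$ requires knowing the unital extension is unique given that it is unital, real positive, and contractive extending $T$; one gets this because any such extension agrees with $T$ on $A$ and with $\mathrm{id}$ on the scalar part, and $A^1 = A \oplus \mathbb{C}1$ as a linear space, so the extension is linearly determined. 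Once this bookkeeping is dispatched, the rest is immediate from Arazy--Solel.
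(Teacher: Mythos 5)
The backward direction is fine and is exactly the paper's Proposition \ref{onladdn}. The forward direction, however, contains a genuine circularity, located precisely where all of the difficulty of the theorem lives. Your plan needs $T^{-1}$ to be real positive --- equivalently $T(\mathfrak{F}_A)=\mathfrak{F}_B$ rather than merely $T(\mathfrak{F}_A)\subset\mathfrak{F}_B$, equivalently that the unital extension $T^1$ is an isometry rather than merely a contraction. You justify $T(\mathfrak{F}_A)=\mathfrak{F}_B$ by saying $\mathfrak{F}_A$ ``is determined by the norm and the extension is unital'', but $\mathfrak{F}_A$ is determined by the norm of the \emph{unitization} $A^1$ (it is certainly not a Banach-space invariant of $A$: the surjective isometry $x\mapsto ix$ of $\Cdb$ does not preserve $\mathfrak{r}$), and at that stage Theorem \ref{extn} only gives that $T^1$ is a unital contraction, which yields the inclusion $T(\mathfrak{F}_A)\subset\mathfrak{F}_B$ and nothing more. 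The reverse inclusion needs $(T^1)^{-1}$ to be contractive; you propose to obtain that by applying Theorem \ref{extn} to $T^{-1}$, but that application requires $T^{-1}$ to be real positive, which is what you were trying to establish. Your closing paragraph repeats the same circle: the isometricity of $T^1$ is deduced from the contractivity of $(T^{-1})^1$, whose existence as a contraction was deduced from the real positivity of $T^{-1}$. Once one knows that $T$ \emph{and} $T^{-1}$ are real positive, the theorem is indeed immediate from Theorem \ref{AS} exactly as you say; so the step you wave through is the whole theorem.

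What is missing is an independent argument that $u:=T^{**}(1)=1$ (after which Arazy--Solel in the bidual, or your unitization bookkeeping, finishes). The survey itself gives no proof, citing \cite{BWj}, but the surrounding material indicates the needed input: the structure theory of surjective isometries (Arazy's variant \cite[Theorem 3.1]{Av} of Theorem \ref{AS} in the unital associative case, and the quasimultiplier analysis behind Theorem \ref{bs1} in general) shows that $u$ is a unitary in $\Delta(JM(B))$. Given that, choose a cai $(e_t)$ for $A$ inside $\tfrac12\mathfrak{F}_A$ (Corollary \ref{pcai}); since $T$ is a real positive contraction, $T(e_t)\in\tfrac12\mathfrak{F}_B$, and taking weak* limits gives $\|1-2u\|\le 1$, whence $5-2(u+u^*)=(1-2u)^*(1-2u)\le 1$, so ${\rm Re}\,u\ge 1$; combined with $\|u\|\le 1$ and unitarity this forces $u=1$. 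Some such nontrivial input --- not derivable from contractivity of the unital extension alone --- has to be supplied before your reduction to Theorem \ref{AS} is legitimate.
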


We now  wish to extend Arazy and Solel's result above to the case of  nonunital
 surjective isometries $T : A \to B$ between approximately unital
(Jordan) operator algebras.   This is needed in a theorem towards the end of the section, the characterization of symmetric real positive projections.
It turns out that the `obvious thing' is wrong in the Jordan   operator algebra case.
Although `unitally linearly isometrically isomorphic'  unital $JC^*$-algebras
are Jordan $*$-isomorphic (this follows e.g.\ from the next theorem, although it is much older of course, due 
again to Harris \cite{Harris0}), 
there exist linearly isometric unital $JC^*$-algebras which are not Jordan isomorphic
(see e.g.\ \cite[Antitheorem 3.4.34 and 
Corollary 3.4.76]{Rod}).

\begin{proposition} \label{noci} There exist linearly completely isometric  unital $JC^*$-algebras which are not Jordan isomorphic.
There exist unital $JC^*$-algebras which are Jordan $*$-isomorphic but not linearly completely isometric.
\end{proposition}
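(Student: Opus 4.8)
The plan is to produce explicit examples witnessing each of the two assertions, drawing on the well-documented pathology of the operator space / isometric theory of $JC^*$-algebras (the cited Antitheorem 3.4.34 and Corollary 3.4.76 of \cite{Rod}). For the first assertion, the natural candidates are the Cartan factors. Recall that the type I Cartan factor $\mathrm{I}_{n,m}$ may be realized as the space $B(H_n,H_m)$ (or $M_{m,n}$ when finite-dimensional) of rectangular matrices sitting inside a $B(K)$ as a ternary subsystem, hence as a $JC^*$-algebra; and the type II and III Cartan factors are the subspaces of antisymmetric, respectively symmetric, matrices in $M_n$. The classical fact (Harris, \cite{Harris0}, and see \cite{Rod}) is that two $JB^*$-triples are linearly isometric if and only if they are triple-isomorphic, and in fact $JC^*$-algebras that are completely isometric as operator spaces carry the same triple product; so one seeks two Cartan factors that are triple-isomorphic—indeed completely isometric—but whose Jordan algebra structures differ. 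The standard example is to compare a type I rectangular factor $M_{1,n}$ (a row Hilbert space, which as a Jordan triple is a spin-type object) against a Jordan algebra where the square map is genuinely nontrivial; more precisely one exhibits, as in \cite{Rod}, a pair of finite-dimensional $JC^*$-algebras whose underlying $JB^*$-triples coincide up to complete isometry but which are not isomorphic as Jordan $*$-algebras because the distinguished unitary (tripotent) implementing the Jordan product is not preserved. I would simply cite and unwind the example behind Antitheorem 3.4.34 of \cite{Rod}, checking that the isometry there is in fact completely isometric (which it is, since these are finite-dimensional $JB^*$-triples and triple isomorphisms are automatically completely isometric by the operator-space structure of $JB^*$-triples, cf.\ the ternary realization inside $B(K)$).

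For the second assertion one needs two $JC^*$-algebras that \emph{are} Jordan $*$-isomorphic yet are not even linearly completely isometric. The key point is that the Jordan $*$-algebra structure does not remember the operator space (matrix norm) structure: a $JC^*$-algebra has many inequivalent completely isometric embeddings into $B(H)$, and two such embeddings of the same abstract $JC^*$-algebra can yield non-completely-isometric operator spaces. The cleanest route is Harris's observation that an abstract $JB^*$-algebra can be faithfully represented on Hilbert space in genuinely different ways; concretely, take a $JC^*$-algebra $A$ with a representation $\pi_1$ and let $A_1 = \pi_1(A)$ with its inherited operator space structure, and then represent the \emph{opposite} or a twisted copy to get $A_2$, with $A_1 \cong A_2$ as Jordan $*$-algebras (via the $*$-isomorphism $\pi_2 \circ \pi_1^{-1}$) but with incomparable matrix norms. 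Corollary 3.4.76 of \cite{Rod} records exactly such a phenomenon, and I would cite it, spelling out the minimal example—likely involving $M_n$ viewed as a $JC^*$-algebra versus its embedding twisted by transposition, where the selfadjoint Jordan structure is untouched but the operator space structure changes (the transpose is isometric but not $2$-isometric on $M_n$ for $n\geq 2$, the standard fact invoked earlier in the paper for $\tfrac12(x+x^T)$ on $M_2$).

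The main obstacle is bookkeeping rather than conceptual: I must be careful about which equivalence is being negated in each half (complete isometry vs.\ Jordan $*$-isomorphism) and make sure the example for the first assertion is genuinely completely isometric (not merely isometric) while still failing to be Jordan isomorphic, and that the example for the second assertion is genuinely Jordan $*$-isomorphic (as involutive Jordan algebras, not merely Jordan-isomorphic ignoring the involution) while failing complete isometry. Both difficulties are resolved by quoting the worked examples in \cite{Rod} (Antitheorem 3.4.34 and Corollary 3.4.76) and verifying the `complete' adjectives using the fact that $JB^*$-triple isomorphisms are automatically completely isometric together with the transpose-map computation on matrix algebras; no new argument is needed, so the proof should be a short paragraph of citation plus verification.
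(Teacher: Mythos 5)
Your treatment of the second assertion is fine and is essentially the paper's own argument: take $d(x)=(x,x^{\intercal})$ from $M_n$ into $M_n\oplus M_n$, observe that $d$ is a Jordan $*$-isomorphism onto its range, and rule out a complete isometry between $M_n$ and the range because that would force the transpose to be a complete isometry on $M_n$, which it is not.

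The first assertion, however, contains a genuine gap. You propose to upgrade the isometry of \cite[Antitheorem 3.4.34]{Rod} to a \emph{complete} isometry by invoking the ``fact'' that $JB^*$-triple isomorphisms (equivalently, surjective isometries) between $JC^*$-algebras are automatically completely isometric. That statement is false, and it is contradicted by the second half of the very proposition you are proving: a Jordan $*$-isomorphism is in particular a triple isomorphism, yet your own example $M_n\cong\{a\oplus a^{\intercal}:a\in M_n\}$ is a triple isomorphism that is \emph{not} a complete isometry. (The same phenomenon is visible for row versus column Hilbert space, which are triple isomorphic but not completely isometric.) So the ``complete'' adjective cannot be obtained by an abstract principle; it has to be verified by hand for the specific pair in Antitheorem 3.4.34. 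The paper does exactly this: the two algebras there are a Jordan $*$-subalgebra $A$ of a $C^*$-algebra $C$ and the same underlying operator space $B$ equipped with the $u$-isotope product $\frac{1}{2}(xu^*y+yu^*x)$ and involution $ux^*u$ for a unitary $u\in C$ with $u,u^*\in A$. The identity map $A\to B$ is trivially a complete isometry of operator spaces, and right multiplication by $u^*$ is a completely isometric Jordan $*$-isomorphism from $B$ onto the Jordan $*$-subalgebra $Au^*$ of $C$ --- complete isometry here coming from the elementary fact that multiplication by a unitary in a $C^*$-algebra is completely isometric, not from any triple-isomorphism principle. Hence $A$ and $Au^*$ are completely isometric but not Jordan isomorphic. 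Your Cartan-factor discussion does not supply a substitute for this verification, so as written the first half of your proof does not go through.
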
 

\begin{proof}   The second statement has a fairly obvious putative counterexample: take the map $d(x) =  (x,x^{\intercal})$ from $M_n$ to $M_n \oplus M_n$.  
The range is a unital $JC^*$-algebra and $d$ is a Jordan $*$-isomorphism.   Here is one way to see that the range $R$  is not 
completely isometric to $M_n$.
By way of contradiction, suppose that $u : M_n \to R$ is a linear complete isometry.
Then it is easy to see that $$\| [ x_{ij}] \| =  \| [ u(x_{ij})] \| = \| [ u(x_{ij})]^\intercal \|  = \| [ u(x_{ji})] \| = \| [ x_{ji}] \| = \| [x_{ij}^\intercal ] \|, $$
for $[ x_{ij}] \in M_m(M_n)) .$
In other words, `transpose' is a complete isometry on $M_n$.   But this is well known to be false (take $x_{ij}$ above to be the usual matrix unit basis 
of $M_n$). 

For the first statement  we just need to show that the two algebras in \cite[Antitheorem 3.4.34]{Rod} are actually completely isometric.
These two algebras are two copies of a single Jordan $*$-subalgebra $A$ of a $C^*$-algebra $C$, but the second copy $B$ is given the 
Jordan product $\frac{1}{2}(x u^* y + y u^* x)$, and involution $u x^* u$, for a unitary $u$ in $C$ with $u$ and $u^*$ in $A$.   Right multiplication by $u^*$ on $C$ 
restricts to a completely isometric Jordan $*$-isomorphism (since $(xu^*)^* = (u x^* u) u^*$) from $B$ onto the Jordan $*$-subalgebra $Au^*$ of $C$.
(See also \cite[Lemma 3.1]{BNjp}.)   Thus $A$ and the latter Jordan $*$-subalgebra are 
completely isometric, but not Jordan isomorphic (since $A$ and $B$ are not Jordan isomorphic).
\end{proof}

Therefore
Banach-Stone theorems for nonunital isometries between Jordan operator algebras are not going
to look quite as one might first expect: one cannot expect the Jordan isomorphism appearing in the
conclusion to map onto the second $C^*$-algebra exactly. Nonetheless we  obtain a reasonable
Banach-Stone type  theorem for nonunital isometries between Jordan operator algebras.   This Banach-Stone type  theorem  (from \cite{BNjp}) plays a crucial role
in one of the theorems in the next section (Theorem \ref{symm}).
One of the main steps is to show that for $T$ as above,   $T^{**}(1)$ is in the Jordan multiplier algebra
$JM(B) = \{ x \in B^{**} : x \circ B \subset B \}$.  But this is not at all clear, and requires 
an analysis of `quasi-multipliers' of (Jordan) operator algebras, a nontrivial link between the latter and quasi-multipliers of any 
generated $C^*$-algebra, 
a little known $C^*$-algebra theorem about quasi-multipliers due to Akemann and Pedersen,
and some theory of Jordan multiplier algebras.
We define a {\em  quasimultiplier} of  $B$ to be
an element $w \in B^{**}$ with $bw b \in B$ for all $b \in B$.   Some of the steps in the proof of 
the next theorem are: showing that $T^{**}(1)$ gives rise to a quasimultiplier of $B$, and that 
such quasimultipliers are in   $JM(B)$ (which uses several of the ingredients mentioned a few 
lines back). 

Let $\Delta(A) = A \cap A^*$, sometimes called the `diagonal' of $A$.   One can show that this is a well defined  $C^*$-algebra (or $JC^*$-algebra) independently
of the particular representation of $A$ on a Hilbert space \cite{BWj}.   If $B$ is a $C^*$-algebra then $JM(B)$  is just the usual  $C^*$-algebraic multiplier algebra
$M(B)$.  This follows from e.g.\ \cite[Proposition 5.10.96]{Rod2}.   We recall that for an approximately unital operator algebra $D$, the multiplier
algebra  $M(D)$ may be defined to be the unital operator algebra $\{ x \in D^{**} : x D + Dx \subset D \}$.

We recall that a $JW^*$-algebra is a weak* closed $JC^*$-subalgebra of $B(H)$ (or of a von Neumann algebra).
The bidual of a  $JC^*$-algebra $A$ is a $JW^*$-algebra.   Indeed it is a weak* closed $JC^*$-subalgebra of $B^{**}$ if $A$ is a $JC^*$-subalgebra of a
$C^*$-algebra $B$.

\begin{theorem} \label{bs1}    Suppose that $T : A \to B$ is an isometric surjection between approximately unital Jordan operator algebras.
Suppose that $B$ is   a Jordan subalgebra of an (associative) operator algebra $D$,
and that $B$ generates $D$ as an operator algebra.
Then there exists a unitary $u \in \Delta(JM(B))$ which is also in
$\Delta(M(D))$,   and  there exists an isometric  Jordan  algebra homomorphism
$\pi : A \to D$, such that  $\pi(A) =  B u^*$ is a Jordan subalgebra of $D$,
and $$T = \pi(\cdot) u .$$ \end{theorem}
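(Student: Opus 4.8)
The plan is to pass to second duals, set $u = T^{**}(1)$ where $1$ denotes the identity of the unital Jordan operator algebra $A^{**}$, show that $u$ is a unitary lying in $\Delta(JM(B)) \cap \Delta(M(D))$, and then ``untwist'' $T$ by $u$. Since $T$ is an isometric surjection, $T^{**} : A^{**} \to B^{**}$ is a weak* homeomorphic isometric surjection between unital Jordan operator algebras; it carries the closed unit ball of $A^{**}$ onto that of $B^{**}$ and maps extreme points bijectively to extreme points, so $u = T^{**}(1)$ is an extreme point of the ball of $B^{**}$. Fix a $C^*$-algebra $C$ generated by $D$, so that $B^{**} \subseteq D^{**} \subseteq C^{**}$ with $C^{**}$ a von Neumann algebra, and compute adjoints there.

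The first real task is to prove that $u$ is a unitary of $C^{**}$, that is $u^*u = uu^* = 1$. This is where non-selfadjointness bites: Kaup's Banach--Stone theorem for $JB^*$-triples is not directly available, so I would work through the enveloping $C^*$-structure, together with the reduction-to-the-unital-case machinery (Theorem \ref{extn}), the unital Banach--Stone theorem (Theorem \ref{AS}), and a direct analysis of the extreme point $u$ of the ball of $B^{**}$, to conclude that $u$ is a maximal partial isometry with $u^*u = uu^* = 1$. At the same time I would show that $u$ gives a \emph{quasimultiplier} of $B$, namely $bub \in B$ for each $b \in B$: here I would imitate the arguments behind Lemma \ref{ishe3} (using Lemmas 2.8 and 4.6 of \cite{BNjp}), exploiting that $u = T^{**}(1)$ and that $T$ is an isometric \emph{surjection} in order to rewrite products $T(a)\, u\, T(a)$ as values of $T^{**}$ on genuine elements of $A$.

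Next, to upgrade ``$u$ is a quasimultiplier of $B$'' to ``$u \in JM(B)$'' and to obtain $u \in M(D)$, I would invoke the Akemann--Pedersen theorem on quasimultipliers of $C^*$-algebras applied to $C$, the theory of Jordan multiplier algebras, and the fact (cited in the text via \cite[Prop.~5.10.96]{Rod2}) that $JM = M$ for $C^*$-algebras; comparing the multiplier algebras of $B$ and of $D$ using that $B$ generates $D$ then yields $u \in M(D)$ as well. Since $u$ is unitary, $u^*$ satisfies the same membership conditions, so $u \in \Delta(JM(B))$ and $u \in \Delta(M(D))$; in particular $u^* \in B^{**}$. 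I expect this multiplier/quasimultiplier bridge to be the main obstacle of the whole proof, since $JM(B)$ can be strictly smaller than the set of unitaries of $\Delta(B^{**})$, and the passage between quasimultipliers of $B$ and of the generated $C^*$-algebra is precisely the subtle point flagged in the discussion preceding the theorem.

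Finally, define $\pi(x) := T(x)\, u^*$ for $x \in A$, the product taken in $D$. This lands in $D$ because $u^* \in M(D)$, and $\pi$ is an isometry because right multiplication by the unitary $u^*$ is isometric on $C^{**}$. Its weak* continuous extension is $\pi^{**}(y) = T^{**}(y)\, u^*$, so $\pi^{**}(1) = u u^* = 1$. The range $B^{**} u^*$ is weak* closed (image of a weak* closed set under a weak* homeomorphism) and is a Jordan subalgebra of $D^{**}$, since $B^{**}$ contains $u^*$ and is closed under the quadratic Jordan product, and $(b_1 u^*) \circ (b_2 u^*) = \frac{1}{2} ( b_1 u^* b_2 + b_2 u^* b_1 )\, u^* \in B^{**} u^*$. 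Thus $\pi^{**} : A^{**} \to B^{**} u^*$ is a unital surjective linear isometry between unital Jordan operator algebras, hence a Jordan homomorphism by Theorem \ref{AS}; restricting to $A$ shows $\pi : A \to D$ is an isometric Jordan homomorphism with $\pi(A) = B u^*$ a Jordan subalgebra of $D$. Since $u^* u = 1$ we get $\pi(x)\, u = T(x)\, u^* u = T(x)$, i.e.\ $T = \pi(\cdot)\, u$, which completes the argument.
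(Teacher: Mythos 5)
Your outline matches the paper's own treatment essentially step for step: the paper (deferring full details to \cite{BNjp}) describes exactly this strategy of showing $u=T^{**}(1)$ gives rise to a quasimultiplier of $B$, upgrading quasimultipliers to elements of $JM(B)$ via the Akemann--Pedersen theorem and Jordan multiplier theory, and then untwisting $T$ by $u$ and applying the unital Banach--Stone theorem (Theorem \ref{AS}) in the bidual. You have also correctly identified the quasimultiplier-to-multiplier bridge and the unitarity of the extreme point $u$ as the genuinely hard steps, which is precisely where the paper says the difficulty lies.
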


As is usual with noncommutative Banach-Stone theorems, we can also write the unitary $u$ on the left:
$T = u \, \theta(\cdot)$ (indeed simply set $\theta = u^* \pi(\cdot) u$).  

In  the unital case the following consequence for associative operator algebras  also follows from \cite[Theorem 3.1]{Av} (see also
\cite[Proposition 3.12]{CR18}):

\begin{corollary} \label{bs2}    Suppose that $T : A \to B$ is an isometric surjection between approximately unital operator algebras.
Then there exists a unitary $u \in \Delta(M(B))$ 
and  there exists an isometric  Jordan  algebra homomorphism
$\pi : A \to B$, such that $T = u \pi(\cdot) .$
\end{corollary}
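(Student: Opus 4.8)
The plan is to derive this directly from Theorem \ref{bs1} by taking the ambient associative operator algebra there to be $B$ itself. Since an associative operator algebra $B$ is trivially a Jordan subalgebra of $D := B$ which generates $D$ as an operator algebra, Theorem \ref{bs1} applies and produces a unitary $u$ which in particular lies in $\Delta(M(D)) = \Delta(M(B))$, an isometric Jordan algebra homomorphism $\pi : A \to D = B$ with $\pi(A) = B u^*$, and the factorization $T = \pi(\cdot)\, u$. So the only remaining task is cosmetic: to move the unitary from the right of $\pi$ to the left.

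For this I would follow the remark immediately after Theorem \ref{bs1}: set $\pi_0 = u^* \pi(\cdot)\, u$. Then $u\, \pi_0(a) = u u^* \pi(a) u = \pi(a) u = T(a)$ for all $a \in A$, so $T = u\, \pi_0(\cdot)$. It remains to check that $\pi_0$ has the advertised properties. First, $\pi_0$ maps into $B$, since $\pi(a) \in B$ and $u, u^* \in M(B)$, so $u^* \pi(a) u \in B$. Next, $\pi_0$ is isometric: $\pi$ is isometric, and conjugation $x \mapsto u^* x u$ by the unitary multiplier $u$ is an isometry of $B$ (indeed $\Vert u^* x u \Vert \le \Vert x \Vert$ and $\Vert x \Vert = \Vert u (u^* x u) u^* \Vert \le \Vert u^* x u \Vert$). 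Finally, $\pi_0$ is a Jordan homomorphism: because $B$ is associative and $\pi$ is a Jordan homomorphism into $B$ we have $\pi(a)^2 = \pi(a\circ a) = \pi(a^2)$, whence $\pi_0(a)^2 = u^* \pi(a) (u u^*) \pi(a) u = u^* \pi(a)^2 u = u^* \pi(a^2) u = \pi_0(a^2)$, which (for the linear map $\pi_0$) is one of the equivalent defining identities of a Jordan homomorphism. This completes the argument.

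There is essentially no serious obstacle here once Theorem \ref{bs1} is in hand; the entire substance of the corollary is contained in that theorem (and hence in the quasimultiplier analysis it rests on — the link with quasimultipliers of a generated $C^*$-algebra and the Akemann–Pedersen result). The one point that requires a moment's care is to read off $u \in \Delta(M(B))$ rather than merely $u \in \Delta(JM(B))$, which is exactly why one takes $D = B$ and uses the ``$\Delta(M(D))$'' clause of Theorem \ref{bs1}. Alternatively, in the unital case one could bypass Theorem \ref{bs1} entirely and invoke \cite[Theorem 3.1]{Av} (or \cite[Proposition 3.12]{CR18}); the route through Theorem \ref{bs1} has the advantage of handling the approximately unital case uniformly.
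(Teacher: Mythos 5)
Your proposal is correct and is exactly the route the paper intends: the corollary is stated as a direct consequence of Theorem \ref{bs1} applied with $D = B$ (so that the ``$\Delta(M(D))$'' clause yields $u \in \Delta(M(B))$), followed by the remark after that theorem which moves the unitary to the left via $\pi_0 = u^*\pi(\cdot)u$. Your verification of the details (that $\pi_0$ maps into $B$ because $u, u^* \in M(B)$, that conjugation by a unitary multiplier is isometric, and that $\pi_0(a^2) = \pi_0(a)^2$ since $uu^* = 1$) fills in precisely what the paper leaves to the reader.
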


Of course if $T$ is a complete isometry in the last results then so will be $\pi$.  

\section{ Contractive projections on operator algebras} \label{cp}  

The following theorem (essentially due to   Effros and St{\o}rmer \cite{ES}  in the case that $P(1) = 1$) 
shows what happens in the case of selfadjoint Jordan operator algebras ($JC^*$-algebras). 
The reader could take $A$ to be a $C^*$-algebra if they wish.  
The new case of the theorem, i.e.\ the case that $A$ is nonunital, or that $P(1) \neq 1$,
can be dealt with by passing to the unitization by using  Theorem \ref{extn}.  

\begin{theorem} \label{frs}   If $P : A \to A$ is a  positive contractive 
projection on a $JC^*$-algebra $A$ then $P(A)$ is a $JC^*$-algebra in the new product $P(x \circ y)$,
$P$ is still positive as a map into the latter $JC^*$-algebra,
and $$P(P(a) \circ P(b)) = P(a \circ P(b)) \; \; , \qquad a, b \in A \, .$$ 
If in addition $P(A)$ is a Jordan subalgebra of $A$ then $P$ is a Jordan conditional expectation: that is, 
$$P(a \circ P(b)) = P(a) \circ P(b)  \; \; , \qquad  a, b \in A.$$
\end{theorem}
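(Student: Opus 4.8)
The plan is to reduce to the unital case and then follow the Effros–Størmer strategy, adapting their arguments from $C^*$-algebras to $JC^*$-algebras. First I would observe that, by Theorem \ref{extn}, we may unitize: extend $P$ to a unital real positive (hence, by Theorem \ref{stinei1}, positive in the usual sense) contractive projection on $A^1$, noting that a unital positive projection remains contractive and that the new product $P(x \circ y)$ and all the identities we want are unchanged under this passage. So assume henceforth that $A$ is a unital $JC^*$-algebra and $P(1) = 1$. The key structural tool is a ``Schwarz-type'' inequality for $P$: since $P$ is unital and positive on the $JC^*$-algebra $A$, one has $P(a \circ a^*) \succcurlyeq P(a) \circ P(a)^*$ at least on the self-adjoint part, and more precisely one wants the Kadison–Schwarz inequality $P(a^2) \succcurlyeq P(a)^2$ for self-adjoint $a$, which holds for unital positive maps on Jordan algebras. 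From this, the standard ``multiplicative domain'' argument shows that the set of $a$ for which equality holds in the Schwarz inequality is a Jordan subalgebra on which $P$ is a Jordan homomorphism, and one deduces the bimodule-type identity $P(P(a) \circ P(b)) = P(P(a) \circ b)$ by polarizing and using that $P(b)$ lies in the range.

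Next I would install the new product. Define $a \cdot b := P(a \circ b)$ on $X := P(A)$. To see this makes $X$ a $JC^*$-algebra, the cleanest route is to invoke the already-available machinery: $X$ is a unital operator space (with the inherited structure, since $A \subset B(H)$), $P : A \to X$ is a unital positive contraction, and one checks that $(X, \cdot)$ satisfies the Jordan axioms. Commutativity of $\cdot$ is immediate; power-associativity (the only Jordan identity one must verify, by Macdonald's theorem it suffices to check it on two generators) follows from the equality-case analysis above applied to elements already in the range, together with the identity $P(P(a) \circ P(b)) = P(P(a) \circ b)$. The involution on $X$ is the restriction of that of $A$ (the range of a positive projection on a $*$-closed space is $*$-closed, since $P(a)^* = P(a^*)$), and the $C^*$-norm identity on $X$ passes through because the norm is unchanged and one can realize $(X,\cdot)$ concretely as a $JC^*$-algebra — here I would appeal to the Friedman–Russo / Effros–Størmer representation of the range of a positive contractive projection, or alternatively to the characterization of $JB^*$-algebras, to upgrade the abstract Jordan algebra $(X,\cdot)$ to a genuine $JC^*$-algebra. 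That $P : A \to (X,\cdot)$ is positive is then automatic since $P$ was positive into $A$ and the positive cone of $(X,\cdot)$ is contained in that of $A$ (as $X$ is a Jordan subalgebra of $A^{**}$ with the $P$-product realized there, or by a direct cone argument).

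For the final displayed identity under the extra hypothesis: if $P(A)$ is a genuine Jordan subalgebra of $A$, then for $a \in A$ and $b \in A$ we have $P(b) \in P(A)$ with $P(P(b)) = P(b)$, and $P(b) \circ P(c) \in P(A)$ for all $c$; applying the bimodule identity $P(P(a)\circ P(b)) = P(P(a)\circ b)$ derived above in the form $P(x \circ P(b)) = P(x) \circ P(b)$ whenever $x \in A$ and the product $P(x)\circ P(b)$ already lies in $P(A)$ — which it does, since $P(A)$ is closed under $\circ$ — gives exactly $P(a \circ P(b)) = P(a) \circ P(b)$.

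The main obstacle I expect is the equality-case / multiplicative-domain step for Jordan algebras: the classical Choi argument for the multiplicative domain of a unital completely positive map uses the $2\times 2$ matrix trick and the operator Schwarz inequality, neither of which is directly available for a merely positive (not completely positive) map on a nonassociative algebra. One must instead argue with the genuine (order-theoretic) Kadison–Schwarz inequality for unital positive maps between $JB^*$-algebras and a careful analysis of when $P(a^2) = P(a)^2$, controlling cross terms via polarization; this is where the real work lies, and where I would lean most heavily on the cited results of Effros–Størmer, together with Proposition \ref{2co} to pass between $\mathfrak{F}_A$-type estimates and genuine positivity when needed.
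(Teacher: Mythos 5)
Your proposal is correct and follows essentially the same route as the paper: the paper's entire argument for the new (nonunital, or $P(1)\neq 1$) case is the reduction to the unital situation via Theorem \ref{extn}, after which the unital case is simply cited from Effros--St{\o}rmer \cite{ES}. Your additional sketch of the Kadison--Schwarz/multiplicative-domain machinery behind the unital case goes beyond what the paper records, and the one step you flag as delicate (the equality-case analysis for a merely positive map on a Jordan algebra) is precisely the part the paper also outsources to \cite{ES}.
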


We remark that the variant of the last 
theorem with JB*- instead of JC*- was later proved in \cite[Theorem
3.2]{CR20}.   The latter paper was in part inspired by our results in \cite{BNj}.

Our goal now is to try to generalize such results to more general algebras.
The following result is from \cite{BNj}: 

\begin{theorem} \label{ccproj} Let $A$ be a (Jordan)  operator algebra, and
$P : A \to A$ a completely contractive projection.
  \begin{itemize} \item  [(1)] The range of $P$ with product
$P(x \circ y)$, is  completely   isometrically Jordan isomorphic to a Jordan operator algebra.  \item  [(2)]  If $A$ is an associative 
operator algebra then the range of $P$ with product $P(x  y)$, is  completely isometrically algebra isomorphic to an
associative operator algebra.
\item  [(3)]  If $A$ is unital  
  and $P(1) = 1$ then the range of  $P$, with product
$P(x \circ y)$,
is unitally completely isometrically Jordan isomorphic to a unital  Jordan operator algebra.
  \end{itemize}
\end{theorem}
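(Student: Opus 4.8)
The plan is to reduce all three parts to the known structure theory for completely contractive projections on a von Neumann algebra, and then to exhibit $P(A)$ as a closed (Jordan) subalgebra of the range of such a projection. First I would manufacture such a projection on a von Neumann algebra. Represent $A$ completely isometrically as a (Jordan) subalgebra of some $B(H)$, choosing the representation so that in case (3) the unit of $A$ equals $1_{B(H)}$. Since $B(H)$ is injective, the completely contractive map $P \colon A \to A \subseteq B(H)$ extends to a completely contractive $\widetilde P \colon B(H) \to B(H)$; because $\widetilde P|_A = P$ and $P^2 = P$, every power $\widetilde P^{\,n}$ again restricts to $P$ on $A$. Now pass to $M := B(H)^{**}$, a von Neumann algebra, and put $\Psi := \widetilde P^{**}$, a normal completely contractive map on $M$. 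Exactly as in the proof of Corollary \ref{Poisson} and of \cite[Corollary 1.6]{ES}, take a point-weak* cluster point $Q$ of the Ces\`aro averages $\frac{1}{n}\sum_{k=1}^{n}\Psi^{k}$ in the weak*-compact unit ball of the normal completely bounded maps on $M$. A routine argument then shows $Q$ is a weak*-continuous completely contractive projection on $M$ with $Q|_A = P$, so that $P(A) \subseteq Q(M)$. In case (3) each $\Psi^{k}$ fixes $1_M = 1_{B(H)} = 1_A$, hence $Q(1_M) = 1_M$; being a unital complete contraction from the $C^*$-algebra $M$ to itself, $Q$ is then completely positive \cite{Pau}, i.e.\ a unital completely positive projection.

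Next I would read off the structure of $Q(M)$. In case (3) the Choi--Effros theorem shows that $Q(M)$ with the product $Q(xy)$ is a unital $C^*$-algebra, carrying the operator space structure it inherits from $M$. In cases (1) and (2), where $Q$ is merely completely contractive, I would invoke the structure theory of completely contractive projections on $C^*$-algebras (due to Youngson, refined in \cite{BNp, BNj}): $Q(M)$ with the product $Q(x\circ y)$ is completely isometrically a Jordan operator algebra, and, when $A$ (hence everything in sight) is associative, $Q(M)$ with the product $Q(xy)$ is completely isometrically an associative operator algebra. Then I would descend. Since $A$ is a Jordan subalgebra of $M$ — a full subalgebra if $A$ is associative — for $x, y \in P(A) \subseteq A$ the element $x\circ y$ (resp.\ $xy$) computed in $M$ already lies in $A$, so the corresponding product of $x$ and $y$ inside $Q(M)$ is $Q(x\circ y) = P(x\circ y)$ (resp.\ $Q(xy) = P(xy)$), which again lies in $P(A)$. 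Thus $P(A)$, with the product $P(x\circ y)$ (resp.\ $P(xy)$), is a norm-closed Jordan subalgebra (resp.\ subalgebra) of the (Jordan) operator algebra supplied for $Q(M)$, hence is itself completely isometrically a (Jordan) operator algebra — the complete isometry being inherited from $P(A) \subseteq Q(M) \subseteq M$ — and in case (3) it is unital with identity $P(1) = 1$. This proves (1) from the Jordan form of the structure theorem, (2) from its associative form, and (3) from Choi--Effros.

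The step I expect to be the main obstacle is the structure theorem invoked in (1) and (2): the description of the range of a completely contractive — as opposed to completely positive — projection on a von Neumann algebra. Dropping positivity costs the Kadison--Schwarz inequality $\Phi(x)^*\Phi(x) \le \Phi(x^*x)$ that drives the Choi--Effros argument, and the honest product $Q(xy)$ need no longer behave well on $Q(M)$; the substance of the proof lies in showing that the symmetrized product $Q(x\circ y)$ nonetheless satisfies the Jordan axioms, and that associativity survives when one starts from an associative algebra. Equivalently, if one forgoes this structure theorem and instead verifies the abstract (Jordan) operator algebra axioms directly for $P(A)$ with the $P$-product, the crux is to establish identities such as $P(P(xy)z) = P(x\,P(yz))$ and their Jordan analogues for $x, y, z \in P(A)$; I would prove these using the lemma on the ideal-like behaviour of kernels of real positive maps together with Lemma \ref{ishe3} of Section \ref{Sec3}, after the reduction to the unital case that Theorem \ref{extn} makes available whenever $A$ is approximately unital.
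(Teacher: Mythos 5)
A preliminary remark: the paper does not actually prove Theorem \ref{ccproj} here --- it is quoted from \cite{BNj} (see \cite{BNp} for the associative case) --- so the comparison is necessarily with the arguments of those papers. Your part (3) is correct and is essentially the standard argument: extend $P$ by injectivity of $B(H)$, average powers of the bidualized extension to obtain an idempotent $Q$ on a von Neumann algebra restricting to $P$ on $A$, note that a unital complete contraction on a von Neumann algebra is completely positive, apply Choi--Effros, and observe that $P(A)$ with $P(x\circ y)$ is a norm-closed unital Jordan subalgebra of the resulting $C^*$-algebra carrying the inherited matrix norms.

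The gap is in parts (1) and (2), and it sits exactly where you predicted: the ``structure theorem'' you invoke to fill it does not exist independently of the theorem being proved. Youngson's theorem (the result used via \cite[Theorem 4.4.9]{BLM} in Section \ref{wace}) says that the range of a completely contractive projection $Q$ on a $C^*$-algebra is a ternary ring of operators under the \emph{ternary} product $Q(xy^*z)$. It says nothing about the binary product $Q(xy)$ or the symmetrized product $Q(x\circ y)$: in particular it does not yield associativity of $Q(xy)$ on $Q(M)$, nor the Jordan identities for $Q(x\circ y)$, nor a completely isometric representation of either as a concrete (Jordan) operator algebra. Extracting the binary structure from the ternary one --- which in \cite{BNp,BNj} involves, among other things, a tripotent attached to $Q^{**}(1)$ and a Peirce-type analysis --- is the actual content of the theorem, and citing ``[BNp, BNj]'' for that step is circular, since parts (1) and (2) specialized to $A=M$ a von Neumann algebra \emph{are} an instance of the statement to be proved. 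Your proposed fallback does not repair this: the lemma on the ideal-like behaviour of kernels, Lemma \ref{ishe3}, and the unitization Theorem \ref{extn} all require $P$ to be real positive, a hypothesis deliberately absent from Theorem \ref{ccproj} (the contrast with Theorem \ref{crpproj} is precisely that no positivity is assumed), so none of these tools is available to establish $P(P(xy)z)=P(xP(yz))$ here. Two lesser points: a point-weak* cluster point of the Ces\`aro averages need not be weak* continuous (though you never actually use normality of $Q$), and the descent from $Q(M)$ to $P(A)$, which is fine, is the only portion of (1)--(2) that your argument genuinely carries out.
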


By the {\em $P$-product} or {\em new product} on $P(A)$
we mean the bilinear map $P(x \circ y)$ (in the Jordan algebra case) or
$P(x  y)$ (in the algebra case).

We do not recall having seen the following explicitly in the literature.  In any case, 
in view of the present venue, we give a simple proof of it using Theorem \ref{frs}.  

\begin{theorem}  \label{Sak}   If $A$ is
a $JC^*$-algebra which has a Banach space predual, then $A$ is
Jordan $*$-isomorphic, via a weak* homeomorphism, to a $JW^*$-algebra.
\end{theorem}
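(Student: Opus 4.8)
The plan is to run the classical Sakai-type argument (``a $C^*$-algebra with a predual is a von Neumann algebra'') through the Jordan-operator-algebra machinery developed above, using a normal state and a conditional-expectation-type projection to produce a concrete weak* closed realization. First I would fix a faithful representation $A \subseteq B(H)$ as a $JC^*$-algebra, and let $M$ be the $JW^*$-algebra generated by $A$ in $B(H)$; concretely one may take $M = A^{**}$ realized as a weak* closed $JC^*$-subalgebra of $B(H_u)^{**} = B(\tilde H)$ in a suitable universal representation, using the fact recalled just before Theorem \ref{bs1} that the bidual of a $JC^*$-algebra is a $JW^*$-algebra. The unit of $A$ (it has one, being a dual space: the predual gives, via Dixmier/Sakai-type reasoning, a unit, or one passes to $A^{**}$ which is unital and observe $A$ is an ideal-summand) maps to a projection $p \in M$, and replacing $M$ by $pMp$ we may assume $A$ and $M$ are unital with the same unit.

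Next I would produce a weak* continuous projection of $M$ onto (a copy of) $A$. The canonical candidate is the adjoint of the canonical inclusion: since $A$ has a predual $A_*$, the inclusion $A_* \hookrightarrow A^* $ dualizes, and one uses that $A^{**} \to A$ (the canonical projection coming from $A = (A_*)^*$) is weak* continuous, contractive, and idempotent, with range $A$. Composing with the weak* continuous Jordan morphism $A^{**} \to M$ (or directly working inside $A^{**}$) yields a weak* continuous contractive projection $P : M \to M$ with range a weak* closed copy of $A$, and $P(1)=1$ since $A$ is unital. By Proposition \ref{onladdn} a unital contraction between unital Jordan operator algebras is real positive, and since $P$ is even completely contractive (the canonical projection $A^{**}\to A$ is a complete quotient/complete contraction), $P$ is real completely positive; as $M$ is selfadjoint, Theorem \ref{stinei1} identifies real positivity with positivity in the usual sense. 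Thus $P$ is a positive (indeed completely positive) unital contractive projection on the $JC^*$-algebra $M$.

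Now Theorem \ref{frs} applies: $P(M)$ is a $JC^*$-algebra in the new product $P(x\circ y)$, and since here $P(M)$ is actually a Jordan subalgebra of $M$ (it equals the weak* closed subalgebra $A$, which is closed under $\circ$ inside $M$), the new product $P(x\circ y)$ coincides with the original product $x \circ y$ of $M$ restricted to $A$. Hence $A$, with its own Jordan product, is a $JC^*$-algebra concretely realized as the weak* closed $JC^*$-subalgebra $P(M)$ of $B(\tilde H)$ — that is, a $JW^*$-algebra. Finally I would check that the identification of $A$ with $P(M)$ is a weak* homeomorphism: the map $M \to P(M)$, $x \mapsto P(x)$, is weak* continuous by construction, its restriction to the weak* closed subspace $P(M)$ is the identity, and $A \cong P(M)$ carries the weak* topology from its predual $A_*$, which agrees with the subspace weak* topology because $P$ is a weak* continuous projection (so $P_*$ identifies $A_*$ with a quotient of $M_*$ compatibly). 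Therefore $A$ is Jordan $*$-isomorphic, via a weak* homeomorphism, to a $JW^*$-algebra.

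The main obstacle I expect is the bookkeeping around the unit and the identification of the predual topology: one must be careful that the ``$A$'' sitting inside $M = A^{**}$ as the range of the canonical projection really does inherit precisely its intrinsic weak* topology (equivalently, that the predual $A_*$ is, via $P_*$, exactly the quotient of $M_*$ determined by the projection), and that $A^{**}$ splits as $A \oplus (A_*)^\perp$ with $A$ a weak* closed Jordan subalgebra — this is the Jordan analogue of the standard fact for $C^*$-algebras and is where one genuinely uses that $A^{**}$ is a $JW^*$-algebra together with the structure of central projections; all the positivity/projection input is then immediate from Theorems \ref{stinei1} and \ref{frs} and Proposition \ref{onladdn}.
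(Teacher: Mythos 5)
Your setup---the canonical weak* continuous unital positive contractive projection $\Phi : A^{**} \to A$ obtained by dualizing $A_* \hookrightarrow (A_*)^{**} = A^*$, with $A^{**}$ a $JW^*$-algebra and $A$ unital---is exactly the paper's. The gap is in the concluding step. You assert that $P(M)=A$ is a \emph{weak* closed} Jordan subalgebra of $M=A^{**}$ (and of $B(\tilde H)$), hence already a $JW^*$-algebra. But the canonical copy of $A$ inside $A^{**}$ is weak* \emph{dense} by Goldstine's theorem, not weak* closed, unless $A$ is reflexive. The range of $\Phi$ would be weak* closed if $\Phi$ were weak* continuous as a self-map of $A^{**}$, but it is not: $\Phi$ is continuous from $\sigma(A^{**},A^*)$ to $\sigma(A,A_*)$, and the inclusion $(A,\sigma(A,A_*)) \hookrightarrow (A^{**},\sigma(A^{**},A^*))$ is continuous only when every functional in $A^*$ restricts to one in $A_*$. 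For the same reason, $A^{**}=A\oplus (A_*)_{\perp}$ holds only as a Banach space decomposition; the canonical copy of $A$ is not a weak* closed summand. So Theorem \ref{frs}, which only tells you that the $P$-product agrees with the given product on the range, does not by itself produce a concrete weak* closed realization of $A$.

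The missing idea---and the actual content of the paper's proof---is to upgrade $\Phi$ from a positive projection to a weak* continuous Jordan $*$-\emph{homomorphism}. One first checks that $\ker\Phi$ absorbs Jordan products ($x\circ y,\, y\circ x\in\ker\Phi$ for $x\in A^{**}$, $y\in\ker\Phi$), and then, writing $x=\Phi(x)+(x-\Phi(x))$ and using the conditional-expectation identity of Theorem \ref{frs}, computes $\Phi(x\circ y)=\Phi(x)\circ\Phi(y)$ for all $x,y$. Consequently $\ker\Phi$ is a weak* closed selfadjoint Jordan ideal in the $JW^*$-algebra $A^{**}$, hence of the form $pA^{**}$ for a central projection $p$, and $\Phi$ restricts to a faithful weak* continuous Jordan $*$-isomorphism from the genuinely weak* closed corner $(1-p)A^{**}$ onto $A$. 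It is this corner, not the canonical copy of $A$, that is the $JW^*$-algebra of the statement. Your closing paragraph correctly anticipates that central projections must enter, but the argument as written stops short of the homomorphism step that makes them available.
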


\begin{proof}  A $JC^*$-algebra with a predual has an identity (by e.g.\ the Krein-Milman theorem and 
\cite[Theorem 4.2.36]{Rod}).   Now $A^{**}$ is a $JW^*$-algebra as we showed above
Theorem \ref{bs1}. Suppose that $E^* = A$.  The canonical map $E \to E^{**} =
A^*$ dualizes to give a weak* continuous contractive unital, hence positive and $*$-linear, surjection
$\Phi : A^{**} \to A$. Regard $A$ as a $JC^*$-subalgebra of $A^{**}$.
It is easy to check that $\Phi$ extends the identity map on $A$, so
that $\Phi \circ \Phi = \Phi$. Thus $\Phi$ is a weak* continuous
`conditional expectation' satisfying Theorem \ref{frs}.  Applying that result we have
  $x \circ y$ and $y  \circ x$ are in Ker$(\Phi)$ for any $x \in
A, y \in \, $Ker$(\Phi)$. It follows that for $x, y \in A$ we have
$\Phi(x \circ y)$ equals $$\Phi((x - \Phi(x)) \circ (y - \Phi(y)) + \Phi(\Phi(x) \circ (y
- \Phi(y)) + \Phi(x  \circ \Phi(y)),$$
which is just $\Phi(x  \circ \Phi(y)) = \Phi(x) \circ \Phi(y) .$
Hence $\Phi$ is a weak* continuous Jordan $*$-homomorphism. 
Thus Ker$(\Phi)$ is a weak* closed selfadjoint two-sided ideal in $A^{**}$. By e.g.\ a variant of \cite[Fact 5.1.10]{Rod2}
(see also e.g.\  \cite[Theorem 3.25]{BWj}),
there
exists a central projection $p \in A^{**}$, with Ker$(\Phi) = p
A^{**}$. We have $$\Phi((1-p)a) = \Phi(a) - \Phi(p)
\Phi(a) = a, \qquad a \in A.$$ Thus $\Phi$ restricts to a surjective weak* continuous
faithful Jordan $*$-homomorphism from the $JW^*$-algebra $(1-p)
A^{**}$ onto $A$.
\end{proof}

\begin{theorem} \label{crpproj} {\rm  (\cite[Theorem 2.5]{BNp} and \cite[Corollary 4.18]{BNjp}) }\ Let $A$ be an 
approximately unital operator algebra (resp.\ Jordan operator algebra),
and $P : A \to A$ a completely contractive  completely real positive projection.
Then $P(A)$ is an approximately unital operator algebra (resp.\ Jordan operator algebra)  in the new product $P(xy)$
(resp.\ $P(x \circ y)$),
$P$ is still completely real positive as a map into the latter,
and $$P(P(a) P(b)) = P(aP(b)) = P(P(a) b)  \; \; , \qquad a, b \in A \, ,$$
(resp.\ $P(P(a) \circ P(b)) = P(a \circ P(b))$ for $a, b \in A$). 
If in addition $P(A)$ is a subalgebra (resp.\ Jordan subalgebra) of $A$ then $P$ is a conditional expectation 
(resp.\ Jordan conditional expectation).
\end{theorem}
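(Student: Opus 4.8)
The plan is to combine a reduction to the unital case, the structural theorem for completely contractive projections (Theorem~\ref{ccproj}), the elementary fact that unital complete contractions are real completely positive, and a Stinespring-type dilation of $P$ into a $C^*$-algebra. First I would reduce to the case in which $A$ is unital and $P(1) = 1$. Using the completely contractive form of Theorem~\ref{extn} (applied with $B = A$), the map $P$ extends to a unital completely contractive, completely real positive map $\tilde P : A^1 \to A^1$; since $\tilde P|_A = P$ and $\tilde P(1) = 1$, this $\tilde P$ is idempotent, with range $R = P(A) \oplus \Cdb 1$. The compatibilities one then checks are routine: the $\tilde P$-product restricts to the $P$-product on $P(A)$, so $P(A)$ with its $P$-product is a norm-closed (Jordan) subalgebra of $R$ with the $\tilde P$-product; real positivity is intrinsic, hence restricts correctly; and since $P(a)\circ P(b)$ and $a\circ P(b)$ lie in $A$ whenever $a,b\in A$, the identities asserted for $\tilde P$ restrict to those asserted for $P$. (This is precisely the unitization device noted just before Theorem~\ref{frs}.) So assume henceforth that $A$ is unital and $P(1) = 1$.

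Theorem~\ref{ccproj}(3) (resp.\ part (2) in the associative case) now supplies a unital completely isometric Jordan (resp.\ associative algebra) isomorphism $\theta$ from $P(A)$, equipped with its $P$-product, onto a unital (Jordan) operator algebra $C \subseteq B(K)$; this gives the structural assertion. For the real positivity of $P$ into this new algebra, put $Q = \theta \circ P : A \to C$. Being a unital complete contraction between unital (Jordan) operator algebras, $Q$ is real completely positive by Proposition~\ref{onladdn} applied at every matrix level; and $\theta$ and $\theta^{-1}$, being unital complete contractions as well, are real completely positive, so $P = \theta^{-1}\circ Q$ is real completely positive as a map into $P(A)$ with the $P$-product. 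Since $A$ is approximately unital, ${\mathfrak r}_A$ spans $A$ by Theorem~\ref{cairp}; applying $P$ shows the real positive cone spans $P(A)$ with the $P$-product, so by Theorem~\ref{cairp} again this algebra is approximately unital.

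It remains to establish the Choi--Effros-type identity --- in the Jordan case, $P(P(a)\circ P(b)) = P(a\circ P(b))$, and in the associative case the one-sided identities $P(aP(b)) = P(P(a)P(b)) = P(P(a)b)$. Writing $a = P(a) + j$ with $j \in {\rm Ker}(P)$, this reduces to showing $P(j\circ c) = 0$ (resp.\ that $P(jc) = P(cj) = 0$) whenever $j \in {\rm Ker}(P)$ and $c \in P(A)$. Here I would dilate: by Theorem~\ref{stine2}, $Q$ has a unital completely positive extension $\hat Q : C^*(A) \to B(K)$, which may be realized as $\hat Q = e\,\pi(\cdot)\,e$ for a $*$-representation $\pi$ on a Hilbert space containing $K$ and the projection $e$ of that space onto $K$. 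The leverage available is threefold: $\hat Q$ vanishes on ${\rm Ker}(P)$ (so $e\pi(j)e = 0$ for $j \in {\rm Ker}(P)$), $\hat Q$ is $*$-preserving, and $\hat Q$ restricts to a (Jordan) homomorphism on $P(A)$ (because $\theta$ is one). From these one wants a multiplicative-domain computation in the spirit of Lemma~\ref{ishe3} and of the short argument in the proof of Theorem~\ref{Sak}, forcing the relevant cross terms into ${\rm Ker}(P)$. I expect this step to be the main obstacle: in contrast to the $JC^*$-algebra situation, ${\rm Ker}(P)$ need not be an ideal here --- for instance ${\rm Ker}(\tfrac12(x + x^{\intercal}))$ on $M_2$ is not --- so the kernel lemmas of Section~\ref{Sec3} do not apply on the nose, and the corner bookkeeping with $e$ and $1-e$ has to be carried through by hand while tracking the non-self-adjoint and Jordan features simultaneously.

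Once that identity is secured, the conditional-expectation statement is a one-line consequence. If $P(A)$ is a (Jordan) subalgebra of $A$, then for $a,b\in A$ we have $P(a)\circ P(b) \in P(A)$, on which $P$ acts as the identity, so $P(P(a)\circ P(b)) = P(a)\circ P(b)$; combining with the identity just proved gives $P(a\circ P(b)) = P(a)\circ P(b)$, i.e.\ $P$ is a Jordan conditional expectation. In the associative case the two one-sided identities likewise yield $P(aP(b)) = P(a)P(b) = P(P(a)b)$.
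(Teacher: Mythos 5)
Your reduction to the unital case and your derivation of the structural and positivity assertions from Theorem \ref{ccproj} and Proposition \ref{onladdn} track exactly the commentary the paper gives after the theorem statement. But the heart of the theorem is the Choi--Effros identity $P(P(a)P(b)) = P(aP(b)) = P(P(a)b)$, and your proposal does not prove it: you reduce it to showing $P(jc) = P(cj) = 0$ for $j \in {\rm Ker}(P)$, $c \in P(A)$, propose a Stinespring dilation $\hat Q = e\,\pi(\cdot)\,e$, and then explicitly concede that the required multiplicative-domain computation is ``the main obstacle.'' That concession is the gap. Worse, the specific leverage you list does not suffice to close it: knowing $e\pi(j)e = 0$ tells you nothing about $e\pi(j)\pi(c)e$ unless $e$ is semi-invariant for $\pi(P(A))$, i.e.\ unless $P(A)$ sits in the multiplicative domain of $\hat Q$; and membership in the multiplicative domain requires $\hat Q(c^*c) = \hat Q(c)^*\hat Q(c)$, which does not follow from $\theta$ being a homomorphism for the $P$-product, because $c^*c$ lives in $C^*(A)\setminus A$ where $\hat Q$ is an uncontrolled Arveson extension rather than anything determined by $P$. (A small further wrinkle: when $A$ is already unital with $P(1)\neq 1$, the extension $\tilde P(a+\lambda 1) = P(a)+\lambda 1$ on $A\oplus^\infty\Cdb$ is not idempotent, so even your first reduction needs more care there.)

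The argument in the cited sources closes this by a different device: rather than dilating $P$ and hoping for multiplicativity, one extends $P$ to a unital completely positive map on an injective object (the injective envelope, or a containing $C^*$-algebra / its bidual) and then restores idempotency of the \emph{extension} by passing to a limit point of Ces\`aro averages of its powers --- each power still restricts to $P$ on $A$. The genuine Choi--Effros theorem then applies to that idempotent completely positive map, and its conditional-expectation identities restrict to $A$ to give exactly the displayed equations, with no need to analyze ${\rm Ker}(P)$ or multiplicative domains at all. Your final paragraph (deducing the conditional expectation property when $P(A)$ is a subalgebra) is correct once the identity is in hand.
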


The first step in the proof is to extend to the unitizations, using e.g.\ Theorem \ref{extn}.
This means that we may assume in the statement of the theorem that $A$ is unital and $P(1) = 1$, and we may then discard the
`completely real positive' as being automatic.  This is also the reason why 
$P$ is still completely real positive as a map into the `new algebra': this
follows from Proposition \ref{onladdn} because $P$ is (a restriction of) a unital complete contraction.

The above results are very satisfactory.   As we said, the present goal is to try to generalize such results to more general algebras, and 
so we now mostly consider  `contractive real positive projection variants'.   
That is, we will see what happens  e.g.\ to the parts of the last theorem
when we drop the  words `completely'.   Or in other
words, we move away from the 
{\em operator space} (in the sense of e.g.\ \cite{BLM,Pau}) case.  
This is a very significant step, indeed loosely speaking one of the main points of operator space theory is that being the 
`correct functional analysis' for many noncommutative problems, 
`it makes things work'.    We gave two justifications for attempting this step in the present paper
 above Theorem \ref{stinei1}.
 
Thus one would not really expect very strong results about real positive projections on general Jordan operator algebras
without a further hypothesis.  An important first illustration 
of the breakdown is that  $P(ab) \neq P(a) b$ in general for a unital operator algebra $A$ and contractive unital (hence real
positive) projection from $A$ onto a subalgebra containing $1_A$, and $a \in A, b \in P(A)$.
This is not even true in general if $A$ is commutative, which also rules out the Jordan variant $P(a \circ b) =  P(a) \circ b$ (see \cite[Corollary 3.6]{BM05}).
However we will see that the latter is true under further hypotheses, such as  if either $a$ or $b$ is in $\Delta(A)$.   

The next question, and one of the most important open questions here, 
is whether the range of a  real positive contractive projection $P$ on an approximately unital (Jordan) operator algebra is always again a Jordan operator algebra in the $P$-product?
Probably the answer is in the negative.   However we are able to prove this kind of result under various hypotheses, illustrated by some of the next several theorems.

 \begin{theorem} \label{cdsproj}  Let $A$ be an 
approximately unital operator algebra,
and $P : A \to A$ a contractive real positive projection.
Suppose that ${\rm Ker}(P)$ is
densely spanned by the real positive elements which it contains.
Then the range $P(A)$ is an
approximately unital operator algebra with product $P(x  y)$,
and $P$ is still real positive as a map into the latter
algebra.   
Also  $$P(a  b) = P(a  P(b)) = P(P(a)b) = P(P(a)  P(b)), \qquad a, b \in A.$$ 
If further $P(A)$ is a subalgebra of $A$ then $P$ is a homomorphism with respect to the $P$-product on its range.
\end{theorem}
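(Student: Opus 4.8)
The plan is to first show that $J := {\rm Ker}(P)$ is a closed two-sided ideal of $A$, then deduce the four displayed identities by the classical ``Choi--Effros'' decomposition trick, and finally identify the range $(P(A),\cdot_P)$ — where I write $x \cdot_P y := P(xy)$ — with the quotient operator algebra $A/J$.

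First I would prove that $J$ is a closed two-sided ideal. The Lemma above on the `ideal-like' behaviour of kernels of real positive maps, applied with $T = P$ (regarded as a map into $A \subseteq B(H)$), gives $xy, yx \in J$ whenever $x \in A$ and $y \in {\mathfrak r}_A \cap J$; by linearity this persists for $y$ in the linear span of ${\mathfrak r}_A \cap J$, and since that span is dense in $J$ by hypothesis while $J$ is closed and multiplication is continuous on bounded sets, we get $AJ \subseteq J$ and $JA \subseteq J$. Next, for the identities: given $a,b \in A$ put $a_0 = a - P(a)$ and $b_0 = b - P(b)$, both in $J$, and expand $ab = P(a)P(b) + P(a)b_0 + a_0 P(b) + a_0 b_0$. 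Each of the last three summands lies in $J$, so applying $P$ yields $P(ab) = P(P(a)P(b))$; expanding $aP(b)$ and $P(a)b$ the same way gives $P(aP(b)) = P(P(a)P(b)) = P(P(a)b)$. This is the displayed chain, and in particular $P(a)\cdot_P P(b) = P(ab)$.

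Then I would handle the range. Since $J$ is a closed two-sided ideal, $A/J$ is (completely isometrically isomorphic to) an operator algebra by the standard quotient theorem for operator algebras (see \cite{BLM}), and it is approximately unital because $(q(e_t))_t$ is a cai whenever $(e_t)_t$ is a cai for $A$, with $q : A \to A/J$ the quotient map. As $P$ annihilates $J$ it factors as $P = \bar P \circ q$ for a linear bijection $\bar P : A/J \to P(A)$, and $\bar P$ is isometric since $\|q(a)\| = \|q(P(a))\| \le \|P(a)\| = \|P(a+k)\| \le \|a+k\|$ for all $k \in J$, whence $\|q(a)\| = \|P(a)\|$. By the identity $P(a)\cdot_P P(b) = P(ab)$ the map $\bar P$ intertwines the quotient product with $\cdot_P$, so transporting the operator algebra structure of $A/J$ along $\bar P$ realizes $(P(A),\cdot_P)$ as an approximately unital operator algebra (in particular $\cdot_P$ is associative, with cai $(P(e_t))_t$). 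That $P$ remains real positive into this new algebra follows by composition: $q$ is a contractive homomorphism between approximately unital operator algebras, hence real positive by Proposition \ref{onladdn}, and $\bar P$ is an isometric surjective algebra homomorphism (in particular a Jordan homomorphism) between approximately unital operator algebras, hence real positive by the earlier result that an isometric surjection of approximately unital operator algebras is real positive precisely when it is a Jordan homomorphism; so $P = \bar P \circ q$ sends ${\mathfrak r}_A$ into ${\mathfrak r}_{(P(A),\cdot_P)}$. Finally, if $P(A)$ is a subalgebra of $A$ then $xy \in P(A)$ for $x,y \in P(A)$, so $x \cdot_P y = P(xy) = xy$, i.e.\ $\cdot_P$ is the original product restricted to $P(A)$; combined with $P(ab) = P(P(a)P(b)) = P(a)P(b)$ this says exactly that $P$ is a homomorphism for the $P$-product.

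The one place the argument is not purely formal is the passage in the first step from ``$y$ real positive in $J$'' to general $y \in J$: this is precisely where the density hypothesis on ${\rm Ker}(P)$ is used, and it is indispensable, since for a general real positive contractive projection the kernel need not be a two-sided ideal and the displayed identities can fail. A minor additional subtlety is that, $P$ being only contractive (not completely contractive), $\bar P$ is only isometric, so the operator space structure on $P(A)$ must be imported from $A/J$ rather than inherited from $A \subseteq B(H)$; one could alternatively reduce at the outset to the unital case $P(1) = 1$ via Theorem \ref{extn}, but this is not needed above.
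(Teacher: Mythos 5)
Your proof is correct and follows essentially the route the paper intends: the paper itself only cites \cite[Theorem 4.10]{BNjp} here, but the ingredients it flags for this theorem (the lemma on the ideal-like behaviour of ${\mathfrak r}_A \cap {\rm Ker}(T)$, its corollary that the density hypothesis makes ${\rm Ker}(P)$ an ideal, and the quotient argument $P(A) \cong A/{\rm Ker}(P)$ as in Theorem \ref{AB}) are exactly the steps you carry out. Your closing remarks on where the density hypothesis is used and on the operator space structure being imported from $A/J$ are both accurate and consistent with the paper's conventions.
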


 This  last result is a simpler special case 
of \cite[Theorem 4.10]{BNjp}.  
 The `densely spanned' hypothesis is a condition that is essentially always satisfied for
positive projections on $C^*$-algebras, so it is not unnatural.   

\begin{proposition} \label{delt}    Let $A$ be  a
(not necessarily approximately  unital)
 Jordan operator algebra,
and let  $P : A \to A$ be a contractive real positive projection.  The restriction of $P$ to the $JC^*$-algebra
$\Delta(A)$ is  positive and satisfies  $$P(\Delta(A)) = \Delta(P(A)) =  \Delta(A) \cap P(A).$$
\end{proposition}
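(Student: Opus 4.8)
The plan is to exploit the unitization trick from Theorem~\ref{extn} to reduce to the unital case, and then to identify $T(1)$ (where $T$ is the unital extension) carefully, since the subtlety is that $P$ need not be unital and $\Delta(A)$ can behave badly with respect to $P(1)$. First I would extend $P$ to a unital real positive contraction $\tilde{P} : A^1 \to A^1$ using Theorem~\ref{extn}; note however that $\tilde{P}$ need not be a projection. Instead, it is cleaner to work directly with $P$ and argue that $P|_{\Delta(A)}$ is contractive and ``real positive'' in the $JC^*$-sense. The key observation is that on the $JC^*$-algebra $\Delta(A)$, real positivity of a map in the sense of ${\mathfrak r}$ reduces (by Theorem~\ref{stinei1}) to ordinary positivity: if $x \in \Delta(A)_+$ then $x \in {\mathfrak r}_A$, so $P(x) \in {\mathfrak r}_A$; but one needs $P(x) \in \Delta(A)$ before one can say $P(x) \geq 0$ in $\Delta(A)$. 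So the heart of the matter is showing $P(\Delta(A)) \subseteq \Delta(A)$, or more precisely the chain of equalities claimed.

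The main steps I would carry out: (i) Show $\Delta(P(A)) \subseteq \Delta(A) \cap P(A)$ and $\Delta(A) \cap P(A) \subseteq \Delta(P(A))$ — here one must be careful that the involution/adjoint used on $P(A)$ is the one inherited as a subset of $B(H)$, and that $\Delta$ of a subspace of $B(H)$ is intrinsic (the cited fact from \cite{BWj}). Since $P$ fixes $P(A)$ pointwise, any self-adjoint (in $B(H)$) element of $P(A)$ lies in $\Delta(A) \cap P(A)$, giving $\Delta(P(A)) = \Delta(A) \cap P(A)$ once we know $\Delta(P(A))$ is computed inside $B(H)$; this part is essentially formal. (ii) Show $P(\Delta(A)) \subseteq \Delta(A) \cap P(A)$. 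Clearly $P(\Delta(A)) \subseteq P(A)$, so it suffices to show $P(x) \in \Delta(A)$, i.e.\ $P(x)^* \in A$ and $P(x) \in A$, for $x \in \Delta(A)$. Since $\Delta(A)_+$ spans $\Delta(A)$, it is enough to treat $x \in \Delta(A)_+$. For such $x$, using the order theory: for $x \in \Delta(A)$ with $0 \le x \le 1$ (after scaling), consider $\|1 - x\| \le 1$ computed in $A^1$, so $x \in {\mathfrak F}_A$, hence $P(x) \in {\mathfrak F}_B$ by the Corollary following Theorem~\ref{extn} (applied to the approximately unital situation, or directly via the unitization), in particular $P(x) \in {\mathfrak r}_A$ and $\|1-P(x)\| \le 1$. (iii) Upgrade ``$P(x) \in {\mathfrak r}_A$ and in the closed ball around $1$'' to ``$P(x)$ is selfadjoint'': this is where I expect the real work. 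The cleanest route is to run the argument from Lemma~\ref{ishe3}: take a sequence/net of projections or a functional-calculus approximation inside $\Delta(A)$, but more directly, apply the extension Theorem~\ref{stine2} to get a completely positive $\widetilde{\tilde P} : C^*(A^1) \to B(H)$ extending (the unitization of) $P$; restricted to the $C^*$-algebra $C^*(\Delta(A))$ this is a genuine positive map, and its values on $\Delta(A)_+$ are positive operators, in particular selfadjoint. One then checks these values agree with $P(x)$ and land back in $A$.

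The step I expect to be the main obstacle is (iii), precisely the interaction between $P(1)$ (or $P(e_t)$ for a cai) and $\Delta(A)$: a real positive contractive $P$ on a nonunital $A$ need not satisfy $P(1)=1$ even after unitizing as a non-projection, so I cannot simply quote ``unital contraction on an operator system is completely positive''. The fix is to combine the Stinespring-type extension Theorem~\ref{stine2} with the identity-type results: apply Theorem~\ref{stine2} to the approximately unital algebra $A$ directly to obtain $\tilde{P} = V^*\pi(\cdot)V$ on $C^*(A)$ with $\pi$ a $*$-representation; then for $x \in \Delta(A)_{sa}$, $\tilde{P}(x) = V^*\pi(x)V$ is selfadjoint, equals $P(x)$, and we must separately verify $P(x) \in A$ (which holds since $x \in A$ and $P$ maps $A \to A$) — so $P(x) \in A \cap (B(H))_{sa}$. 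Finally, selfadjointness plus membership in $A$ is exactly membership in $\Delta(A)$, completing $P(\Delta(A)) \subseteq \Delta(A) \cap P(A) = \Delta(P(A))$; positivity of $P|_{\Delta(A)}$ into $\Delta(P(A))$ is then immediate from $P(x) = V^*\pi(x)V \geq 0$ for $x \geq 0$; and the reverse inclusion $P(\Delta(A)) \supseteq \Delta(A)\cap P(A)$ holds because $P$ fixes $\Delta(A) \cap P(A) \subseteq P(A)$ pointwise.
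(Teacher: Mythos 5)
Your overall architecture coincides with the paper's: everything reduces to showing that the restriction of $P$ to the $JC^*$-algebra $\Delta(A)$ is positive and maps into $\Delta(A)$ (equivalently, takes selfadjoint elements to selfadjoint elements), after which the cycle of inclusions $\Delta(P(A)) \subset \Delta(A)\cap P(A) \subset P(\Delta(A)) \subset \Delta(P(A))$ closes up essentially as you describe. The paper obtains the key fact by citing \cite[Lemma 2.7]{BNjp} (the $JC^*$-algebra analogue of the ``real positive equals positive'' statement in Theorem \ref{stinei1}) together with the lines after \cite[Corollary 2.5]{BWj}; no unitization, no analysis of $P(1)$, and no dilation is needed.

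The genuine gap is in your step (iii). You invoke Theorem \ref{stine2} to write $P$ (or its unitization) as $V^*\pi(\cdot)V$, but that theorem requires the map to be real \emph{completely} positive, and $P$ is only assumed real positive; real positivity does not imply real complete positivity for operator-valued maps (the transpose on $M_2$ is positive but admits no Stinespring form), so the representation $P(x)=V^*\pi(x)V$ is not available and the selfadjointness of $P(x)$ cannot be extracted this way. The repair is much cheaper than a dilation: restrict $P$ to the $JC^*$-algebra $\Delta(A)$, viewed as a map into $B(H)$; by the ``real positive if and only if positive'' half of Theorem \ref{stinei1} in its $JC^*$-algebra form (this is exactly \cite[Lemma 2.7]{BNjp}), this restriction is positive in the ordinary sense, and any positive linear map is $*$-linear because selfadjoints are differences of positives. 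Hence $P(x)=P(x)^*$ for selfadjoint $x\in\Delta(A)$, and since $P(x)\in A$ this gives $P(x)\in\Delta(A)$, which is all your argument needs. One further small point: your step (i) is not purely formal --- the inclusion $\Delta(A)\cap P(A)\subset \Delta(P(A))$ requires knowing that $x^*\in P(A)$ (not merely $x^*\in A$) for such $x$, which again uses the $*$-linearity of $P|_{\Delta(A)}$; the paper sidesteps this by routing the inclusion through $P(\Delta(A))$, namely $\Delta(A)\cap P(A)\subset P(\Delta(A))$ (trivial, since such $x$ satisfy $x=P(x)$) followed by $P(\Delta(A))\subset\Delta(P(A))$ (which is where the $*$-linearity is used, once).
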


\begin{proof}  Here  $\Delta(A) = A \cap \Delta(A^1)$, and
$\Delta(P(A))$ consists of the elements in $P(A)$ such that $x^* \in P(A)$ (so that $x \in \Delta(A)$,
and we can take the last involution to be the one in $\Delta(A)$).
The restriction of $P$ to the $JC^*$-algebra
$\Delta(A)$ is real positive, so it is positive and maps into $A \cap \Delta(A^1) = \Delta(A)$ by \cite[Lemma 2.7]{BNjp} and
the lines after \cite[Corollary 2.5]{BWj}.
 By the above,
$\Delta(P(A)) \subset \Delta(A) \cap P(A)$.   However
$\Delta(A) \cap P(A) \subset P(\Delta(A))$ since if $x \in \Delta(A)$ with $x = P(x)$, then $x \in  P(\Delta(A))$.
Finally, $P(\Delta(A))  \subset \Delta(P(A))$,
 since if $x \in \Delta(A)$ with $x = x^*$, then $P(x) = P(x)^* \in \Delta(P(A))$.
 \end{proof}

\begin{lemma} \label{bidjo} Suppose that $A$ is  an approximately  unital  Jordan
operator algebra, and $P : A \to A$ is a contractive real positive projection such that 
$P(A)$ is a Jordan operator algebra with  the $P$-product.  Then this Jordan algebra is approximately  unital,
and 
 $P^{**}(A^{**})$ is a unital  Jordan
operator algebra with the $P^{**}$-product.  \end{lemma}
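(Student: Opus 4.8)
The plan is to prove both assertions at once, by identifying the identity element of $P^{**}(A^{**})$ under the $P^{**}$-product. Write $B$ for the Jordan operator algebra $(P(A),\circ_P)$, where $x\circ_P y := P(x\circ y)$. First I would pass to biduals. Since $A$ is approximately unital, $A^{**}$ is a \emph{unital} Jordan operator algebra whose identity $1$ is the weak* limit of any (Jordan) cai of $A$, by \cite{BWj}; and $P^{**}:A^{**}\to A^{**}$ is a weak* continuous contractive projection which is again real positive (standard in this setting; see \cite{BWj,BNjp}). I would also record the routine identifications: $P^{**}(A^{**})$ is the weak* closure of $P(A)$ in $A^{**}$, hence canonically the bidual of $P(A)$; and, using separate weak* continuity of the product on the Jordan operator algebra $A^{**}$ together with weak* continuity of $P^{**}$, the $P^{**}$-product on $P^{**}(A^{**})$ is the unique separately weak* continuous extension of $\circ_P$, that is, it is the bidual Jordan operator algebra product of $B$. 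Consequently $\bigl(P^{**}(A^{**}),\circ_{P^{**}}\bigr) = B^{**}$, which is a Jordan operator algebra by \cite{BWj} because $B$ is (this is the only place the hypothesis on $P(A)$ is used).

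The heart of the matter is then to show that $q := P^{**}(1)$ is the identity of $B^{**}$ for the product $\circ_{P^{**}}$. Since $P^{**}$ is contractive, $\|q\|\leq 1$, so $\|1-(1-q)\| = \|q\|\leq 1$ and hence $1-q\in {\mathfrak F}_{A^{**}}\subseteq {\mathfrak r}_{A^{**}}$ by Proposition \ref{2co}. Moreover $P^{**}(1-q) = q - P^{**}(q) = q - q = 0$ since $P^{**}$ is idempotent, so $1-q\in {\rm Ker}(P^{**})$. I would then invoke the Jordan analogue of the Lemma above on the ``ideal-like'' property of kernels of real positive maps --- that $x\circ y\in {\rm Ker}(T)$ whenever $x\in A$ and $y\in {\mathfrak r}_A\cap {\rm Ker}(T)$; see \cite{BNjp} --- applied to $P^{**}$ on $A^{**}$ and to the real positive element $1-q$ of ${\rm Ker}(P^{**})$. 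This gives $x\circ(1-q)\in {\rm Ker}(P^{**})$ for every $x\in A^{**}$, i.e.\ $P^{**}(x\circ q) = P^{**}(x)$ for all $x\in A^{**}$. Restricting to $x\in P^{**}(A^{**})$ yields $q\circ_{P^{**}} x = P^{**}(q\circ x) = x$; since $q\in P^{**}(A^{**})$ this says precisely that $q$ is the identity of $B^{**}$ (and then $\|q\| = 1$). Thus $\bigl(P^{**}(A^{**}),\circ_{P^{**}}\bigr) = B^{**}$ is a unital Jordan operator algebra, which is the second assertion.

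It remains to descend to $B$ itself, via the fact that a Jordan operator algebra whose bidual is unital is approximately unital. Indeed, by Goldstine $1_{B^{**}}$ lies in the weak* closure of the unit ball of $B$, so there is a net $(b_i)$ in that ball with $b_i\to 1_{B^{**}}$ weak*; separate weak* continuity of the product of $B^{**}$ gives $b_i\circ a\to a$ weakly in $B$ for each $a\in B$, and the usual convexity argument (passing to convex combinations) then produces a contractive approximate identity for $B$. Hence $B = (P(A),\circ_P)$ is approximately unital, which is the first assertion. The one step here that is not bookkeeping with biduals, weak* topologies and unitizations is the appeal to the Jordan ``ideal-like kernel'' fact $A^{**}\circ(1-q)\subseteq {\rm Ker}(P^{**})$ --- this is the main obstacle, and a fully self-contained proof would need to establish that fact in the Jordan category (its associative form being the Lemma recalled earlier in the paper).
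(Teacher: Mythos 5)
Your argument is correct and its skeleton is the same as the paper's: identify $P^{**}(A^{**})$ with $B^{**}$ carrying the separately weak* continuous bidual of the $P$-product, show that $q = P^{**}(1)$ is an identity for that product, and descend to obtain a cai for $B$. The difference is that the paper disposes of the last two steps by citation --- the last assertion of \cite[Lemma 4.9]{BNjp} for $q$ being an identity, and \cite[Lemma 2.6]{BWj} for the descent from a unital bidual to an approximately unital algebra --- whereas you prove them. Your Goldstine-plus-convex-combinations argument is exactly the standard proof of the latter citation. Your route to the identity element is the substantive addition: noting that $1-q \in {\rm Ker}(P^{**})$ with $\Vert 1 - (1-q)\Vert = \Vert q \Vert \le 1$, and feeding this into the ``ideal-like kernel'' property of real positive maps to get $P^{**}(x \circ q) = P^{**}(x)$ for all $x$. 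One point worth recording in your favour: although you quote that kernel lemma for elements of ${\mathfrak r}_{A^{**}} \cap {\rm Ker}(P^{**})$, the element $1-q$ you actually apply it to lies in the smaller set ${\mathfrak F}_{A^{**}} \cap {\rm Ker}(P^{**})$, and in that case the conclusion can be verified directly: if $\psi$ is a positive norm-one-or-less extension to $C^*(A^{**})$ of $\varphi \circ P^{**}$ for a state $\varphi$, then $\psi(1-q) = 0$ together with $\Vert 1-(1-q) \Vert \leq 1$ forces $\psi\bigl((1-q)^*(1-q)\bigr) = \psi\bigl((1-q)(1-q)^*\bigr) = 0$, so Cauchy--Schwarz gives $\psi\bigl(x(1-q)\bigr) = \psi\bigl((1-q)x\bigr) = 0$ and hence $P^{**}(x \circ (1-q)) = 0$. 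Thus the one gap you flagged yourself --- needing the Jordan form of the kernel lemma --- is closable by this elementary computation for ${\mathfrak F}$-elements, and your proof stands.
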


\begin{proof}  Let $B$ be $P(A)$ with  the $P$-product. By basic functional analysis
$$P(A)^{\perp \perp}  = ( {\rm Ker}(P^*) )^\perp = (I-P^*)(A^*)^\perp={\rm Ker}((I-P^*)^*) = P^{**}(A^{**}),$$
since ${\rm Ker}(I-P^{**}) = P^{**}(A^{**})$.   Thus $P(A)^{**} = B^{**} \cong P^{**}(A^{**})$, via the bidual of the canonical inclusion $i : P(A) \to A$.  
Now $B^{**}$ is 
a Jordan
operator algebra with separately weak* continuous product $m^{**}(\zeta, \eta) = 
\lim_s \, \lim_t \, P(a_s \circ b_t)$, assuming that $a_s \to \zeta$ and $b_t \to  \eta$ weak* in $B^{**}$.
Here $a_s, b_t \in B$.  Now $i^{**}(P(a \circ b)) = P^{**}(i^{**}(a)  \circ i^{**}(b))$ for $a, b \in B$.
Hence $P^{**}(A^{**})$  is 
a  Jordan
operator algebra with product $$\lim_s  \, \lim_t \, i^{**}(P(a_s \circ b_t)) = \lim_s  \, \lim_t \, P^{**}(i^{**}(a_s) \circ i^{**}(b_t)) = P^{**}(i^{**}(\zeta) \circ i^{**}(\eta) ),$$ 
which is  the $P^{**}$-product.   These are weak* limits.  

By the last assertion of \cite[Lemma 4.9]{BNjp} applied in the bidual, $P^{**}(1)$ is an identity for the $P^{**}$-product.  So 
$B^{**}$ is 
a  unital Jordan
operator algebra, and hence $B$ is 
approximately unital by \cite[Lemma 2.6]{BWj}.  
 \end{proof}

 \begin{theorem} \label{i2} Let $A$ be  an approximately  unital  Jordan
operator algebra,
and let  $P : A \to A$ be a contractive real positive projection.
Then $\Delta(A) \cap P(A)$ is a $JC^*$-algebra  in the $P$-product.  If 
$P(A)$ is a Jordan operator algebra with  the $P$-product then 
  $$P(a \circ b) =
P(P(a) \circ b), $$   for
$a \in A$ and $b \in \Delta(A) \cap P(A)$; or for $a \in \Delta(A)$ and $b \in  P(A).$
In particular, if $A$ is unital then $P(1)$ is an identity for the $P$-product on $P(A)$.
 \end{theorem}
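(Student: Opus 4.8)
The first statement is immediate from results already in hand. By Proposition \ref{delt}, $P_0 := P|_{\Delta(A)}$ is a positive contractive projection of the $JC^*$-algebra $\Delta(A)$ onto $\Delta(A)\cap P(A)$, so Theorem \ref{frs} applied to $P_0$ shows that $\Delta(A)\cap P(A) = P_0(\Delta(A))$ is a $JC^*$-algebra in the product $P_0(x\circ y)=P(x\circ y)$, that is, in the $P$-product, with $P_0$ still positive into it. Theorem \ref{frs} also records $P(P(x)\circ P(y))=P(x\circ P(y))$ for $x,y\in\Delta(A)$; by commutativity of the Jordan product this equals $P(P(x)\circ y)$, so with $x=a$ and $y=b\in\Delta(A)\cap P(A)$ (so that $P(b)=b$) one already gets $P(a\circ b)=P(P(a)\circ b)$ in the special case $a\in\Delta(A)$, $b\in\Delta(A)\cap P(A)$. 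Taking instead $x=a-P(a)\in\Delta(A)\cap{\rm Ker}(P)$ yields the ideal-type fact that $P(y\circ d)=0$ whenever $y\in\Delta(A)\cap{\rm Ker}(P)$ and $d\in\Delta(A)\cap P(A)$; and iterating the same identity gives $d^{*n}=P(d^{\circ n})$ for selfadjoint $d\in\Delta(A)\cap P(A)$ (here $d^{*n}$ is the $n$th power in the $P$-product), so that $P$ carries the $\Delta(A)$-functional calculus of such $d$ onto its $P$-product functional calculus.

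To upgrade this special case to the two stated identities I would pass to the bidual, and here the hypothesis enters through Lemma \ref{bidjo}: then $P(A)$ with the $P$-product is approximately unital and $P^{**}(A^{**})$ with the $P^{**}$-product is a unital Jordan operator algebra with unit $P^{**}(1)$. Since $P^{**}$ is a weak$^*$ continuous real positive contraction on the unital Jordan operator algebra $A^{**}$, Proposition \ref{delt} applied to $P^{**}$, together with Theorem \ref{frs} and the fact that a $JC^*$-algebra with a predual is a $JW^*$-algebra (Theorem \ref{Sak}), shows that $\Delta(A^{**})\cap P^{**}(A^{**})$ — a weak$^*$ closed Jordan subalgebra of $P^{**}(A^{**})$ (with its $P^{**}$-product) containing $P^{**}(1)$ — is a unital $JW^*$-algebra in the $P^{**}$-product, hence the weak$^*$ closed span of its projections. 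The plan is to reduce each stated identity, by weak$^*$ continuity and linearity, to the case in which the $\Delta(A)\cap P(A)$-variable is one such projection $e$, and then to apply Lemma \ref{ishe3} to $P^{**}$, obtaining $P^{**}(\zeta\circ e)=P^{**}(\zeta)\circ e$ for every $\zeta\in A^{**}$ and, replacing $\zeta$ by $P^{**}(\zeta)$, also $P^{**}(P^{**}(\zeta)\circ e)=P^{**}(\zeta)\circ e$; combining and restricting back to $A$ then gives $P(a\circ b)=P(P(a)\circ b)$ for $a\in A$ and $b\in\Delta(A)\cap P(A)$. The case $a\in\Delta(A)$, $b\in P(A)$ is handled in the same spirit, now using in addition the special-case identity and the functional-calculus intertwining above to pass from $b\in\Delta(A)\cap P(A)$ to a norm-dense subset of $P(A)$.

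The final sentence then drops out: taking $a=1$ in the second identity, for any $b\in P(A)$ we get $b=P(b)=P(P(1)\circ b)$, so $P(1)$ is a left — hence, by commutativity, a two-sided — identity for the $P$-product on $P(A)$. The hard part is the reduction in the middle paragraph. Lemma \ref{ishe3} requires a projection for the \emph{ambient} Jordan product of $A^{**}$ that is fixed by $P^{**}$, whereas the spectral theory producing ``enough'' projections lives inside the $JW^*$-algebra $\Delta(A^{**})\cap P^{**}(A^{**})$ with its \emph{intrinsic} $P^{**}$-product — and an idempotent for one of these products is in general not an idempotent for the other. Bridging this gap (equivalently, verifying that the displayed identity really does survive the passage to projections) is the essential technical obstacle and the place where one should expect to work hardest; the short $C^*$-style argument that suffices when $A$ is itself a $JC^*$-algebra, via Theorem \ref{frs}, gives no shortcut once $A$ is genuinely nonselfadjoint.
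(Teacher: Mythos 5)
Your first paragraph and your overall strategy match the paper's proof: restrict to $\Delta(A)$, invoke Proposition \ref{delt} and Theorem \ref{frs} for the first assertion, prove the power identities $P(q^n)=q$, pass to the bidual via Lemma \ref{bidjo}, use Theorem \ref{Sak} to see that $\Delta(A^{**})\cap P^{**}(A^{**})$ is a $JW^*$-algebra in the $P^{**}$-product and hence is spanned by its $P$-product projections, and reduce the displayed identity to the case of such a projection. But you then stop exactly at the crux. You correctly observe that Lemma \ref{ishe3} needs a projection for the \emph{ambient} product of $A^{**}$ whose image under $P^{**}$ is a projection, while spectral theory only hands you idempotents for the \emph{intrinsic} $P$-product, and you declare bridging this gap to be ``the essential technical obstacle'' without resolving it. That unresolved step is the heart of the theorem, so the proposal is incomplete as a proof. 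The paper's resolution is the support projection trick: from $P(q^n)=q$ one gets, by carefully approximating $q^{1/n}$ by polynomials without constant term, that $P(q^{1/n})=q$ for all $n$; taking weak* limits, $Q(s)=q$ where $s$ is the support projection of $q$ in $M=A^{**}$ --- a genuine ambient projection. One then combines two computations: the restriction $R$ of $Q$ to $sMs$ satisfies $R(sas)=R(qsasq)=R(qaq)$ by \cite[Lemma 2.8]{BNjp} (since $Q(s)=q$ and $q\leq s$), while viewing $Q$ as a map into $Q(M)$ with the $P$-product (unital by Lemma \ref{bidjo}) and applying Lemma \ref{ishe3} with $e=s$ gives $Q(sas)=Q(qQ(a)q)$. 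Chaining these yields $P(qaq)=P(qP(a)q)$, and the same argument applied to $1-q$ plus the polarization identity $a\circ q=\tfrac12(a+qaq-q^\perp aq^\perp)$ gives $P(a\circ q)=P(P(a)\circ q)$.

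A secondary problem: for the case $a\in\Delta(A)$, $b\in P(A)$ you propose to ``pass from $b\in\Delta(A)\cap P(A)$ to a norm-dense subset of $P(A)$,'' but $\Delta(A)\cap P(A)$ is in general a proper closed subspace of $P(A)$, not a dense one, so no such density argument is available. The paper does not derive this case from the other one at all; it is quoted directly from \cite[Lemma 4.9]{BNjp}, and it is that same external lemma which supplies the final assertion about $P(1)$ being an identity for the $P$-product. So two of the three assertions of the displayed identity rest on inputs your outline either leaves open or replaces with an argument that cannot work as stated.
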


\begin{proof}   The restriction $E$ of $P$ to
$\Delta(A)$ is a positive contractive  projection by 
Proposition \ref{delt}, with range $\Delta(A) \cap P(A)$.  
Thus by Theorem  \ref{frs} we have that $\Delta(A) \cap P(A)$ is a $JC^*$-algebra  in the new product $P(x \circ y)$ and the old 
involution,
and $E$ is still positive as a map into the latter $JC^*$-algebra.   
The case of the displayed equation  when $a \in \Delta(A)$ and $b \in  P(A)$
may be found in  \cite[Lemma 4.9]{BNjp}. 

We next show that if $q$ is a projection in the the latter $JC^*$-algebra 
then   $P(qaq) = P(q P(a) q)$ and $P(a \circ q) =
P(P(a) \circ q)$ for all $a \in A$. If $q$ is an identity for $A$
then these assertions are trivial.  If not,
by Theorem \ref{extn}  we may assume that $A$ and $P$ are unital. 
So $q = P(q^2) \geq 0$.   
Claim: $P(q^n) = q$ for all $n \in \Ndb$.  In fact this is clear by the $C^*$-theory, but we give a 
short alternative proof. 
For $n = 1, 2$ this is clear.  By Theorem \ref{frs} 
$P(q^{n+1}) = P(q \circ P(q^n))$, the claim follows by induction.

By functional calculus we can approximate 
$q^{\frac{1}{n}}$ appropriately by polynomials $p_m(q)$ in $q$,
where $p_n$ has no constant term.  Then $P(p_m(q)) = p_m(1) \, q$
clearly.    If this approximation is done carefully one sees  
that $P(q^{\frac{1}{n}}) = q$ for all $n \in \Ndb$.

Let $Q = 
P^{**}$ and  $M = A^{**}$.
So 
$M$ is unital and weak* closed  and $Q$ is weak* continuous
and unital.     Note that $\Delta(M)$ is then weak* closed (since if $a_t, a_t^* \in A$ and $a_t \to \eta$ weak* 
then $a_t^* \to \eta^* \in A^{**} = M$ weak*).  Hence  so is  $Q(\Delta(M)) = \Delta(M) \cap P(M)$ weak* closed. 
Note that $\Delta(M)$ contains
the weak* closed algebra $N$ in $M$ generated by $q$ and $1$, a von Neumann algebra.  Since $Q(q^{\frac{1}{n}}) = q$ we have $Q(s) = q$ where $s$ is the support projection of $q$ 
in $M$.
Let $R$ be the restriction of $Q$ to  the unital Jordan operator algebra $sMs$.   Then $R$  is real positive.
By \cite[Lemma 2.8]{BNjp}  or the remark after it, we have $R(sas) = R(qsasq) = R(qaq)$ for all $a \in M$.
On the other hand, let us now view $Q$ as a map into $Q(M)$ with the $P$-product, which is a unital Jordan operator algebra
by Lemma \ref{bidjo}.  Then 
 we have $Q(sas) = Q(q Q(a) q)$ by Lemma \ref{ishe3}.
Hence $$P(qaq) = Q(qaq) = R(qaq) = R(sas) = Q(sas) = Q(q Q(a) q) = P(qP(a)q).$$

Next note that $1-q$ is a selfadjoint contraction with
$$P((1-q)^2) = 1 - 2q + P(q^2) = 1-q.$$ 
Thus $P(q^\perp aq^\perp) = P(q^\perp P(a)q^\perp)$ too, by the last paragraph.
By the argument at the end of the proof of 
Lemma \ref{ishe3} we deduce that 
$P(a \circ q) = P(a \circ P(q))$. 
   
 For the remaining  case of the displayed equation, 
we may assume that $A$ is weak* closed  and $P$ is weak* continuous (or else replace by $M$ and $Q$ above).  
This will work here  because $\Delta(A) \cap P(A) \subset \Delta(A^{**}) \cap P^{**}(A^{**})$.
By Theorem \ref{Sak},   $\Delta(A) \cap P(A)$ is a $JW^*$-algebra in the $P$-product.   
Hence $\Delta(A) \cap P(A) $ is spanned by the projections (in the $P$-product) which it contains (by essentially 
the same proof of the analogous fact for von Neumann algebras).  
Thus the second assertion yields the third (and centered) assertion.
\end{proof}

\begin{corollary} \label{ipscor}  Let $A$ be an 
approximately unital Jordan operator algebra,
and $P : A \to A$ a  real positive contractive projection with $P(A) \subset \Delta(A)$.
Then $P(A)$ is a $JC^*$-algebra in the $P$-product, and the restriction of $P$ to ${\rm joa}(P(A))$
is a Jordan $*$-homomorphism onto this  $JC^*$-algebra.
In this case $P$ is a Jordan conditional expectation with 
respect to the $P$-product:  $$P(a \circ P(b)) =P(P(a)  \circ P(b))$$ for $a, b$ in $A$.  \end{corollary}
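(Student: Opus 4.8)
The plan is to push everything down to the $JC^*$-algebra $\Delta(A)$, where Theorems~\ref{frs} and~\ref{i2} do essentially all the work. Since $P(A)\subset\Delta(A)$ we have $\Delta(A)\cap P(A)=P(A)$, so by Proposition~\ref{delt} the restriction $E$ of $P$ to $\Delta(A)$ is a positive contractive projection of the $JC^*$-algebra $\Delta(A)$ onto $P(A)$, and $P(A)=\Delta(P(A))$ is in particular self-adjoint in $\Delta(A)$. Applying Theorem~\ref{frs} to $E$ gives at once the first assertion, namely that $P(A)$ with the $P$-product $P(x\circ y)$ is a $JC^*$-algebra, that $P$ remains positive into it, and the identity $P(P(a)\circ P(b))=P(a\circ P(b))$ for $a,b\in\Delta(A)$.

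Granting this, the centred (conditional expectation) identity is then immediate from Theorem~\ref{i2}: since $P(A)$ is now known to be a Jordan operator algebra in the $P$-product, that theorem yields $P(a\circ b)=P(P(a)\circ b)$ for every $a\in A$ and every $b\in\Delta(A)\cap P(A)=P(A)$; taking $b=P(c)$ gives $P(a\circ P(c))=P(P(a)\circ P(c))$ for all $a,c\in A$, which is the asserted Jordan conditional expectation identity.

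It remains to show that $P$ restricted to $D:={\rm joa}(P(A))$ is a Jordan $*$-homomorphism onto $P(A)$ with its $P$-product. As $\Delta(A)$ is a $JC^*$-subalgebra of $A$ containing the self-adjoint set $P(A)$, the algebra $D$ is a $JC^*$-subalgebra of $\Delta(A)$, and $P(D)=P(A)$ (because $P(A)\subseteq D$ while $P(D)\subseteq P(\Delta(A))=P(A)$), so $E|_D$ is a positive contractive projection of the $JC^*$-algebra $D$ onto $P(A)$. Positivity makes $P|_D$ self-adjoint, and it is surjective onto $P(A)$; so what is left is multiplicativity for the $P$-product, i.e.\ $P(d_1\circ d_2)=P(P(d_1)\circ P(d_2))$ for $d_1,d_2\in D$. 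By the centred identity of the previous paragraph this is equivalent to $P\big(d_1\circ(d_2-P(d_2))\big)=0$, that is, to the ``ideal-like'' fact that $P(d\circ y)=0$ whenever $d\in D$ and $y\in D\cap\ker E$. Symmetrising $P(P(a)\circ P(b))=P(a\circ P(b))$ (commutativity of $\circ$ together with swapping $a$ and $b$) already gives $P(P(a)\circ b)=P(a\circ P(b))$, hence $P(c\circ y)=0$ for $c\in P(A)$ and $y\in\ker E$; so the ideal-like fact holds when $d\in P(A)$.

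The one genuinely non-formal step is promoting this from $P(A)$ to all of $D$. I would get it from the established theory of positive contractive projections on $JC^*$-algebras underlying Theorem~\ref{frs} (cf.\ \cite{ES}): such a projection $E$ restricts to a Jordan homomorphism of the $JC^*$-subalgebra generated by $E(\Delta(A))$ onto that range with its $E$-product, and this generated subalgebra is precisely $D={\rm joa}(P(A))$. Unpacked, this rests on the Schwarz-type inequality $P(x^2)\geq P(P(x)^2)$ for self-adjoint $x\in\Delta(A)$ — which here is cheap, since $P\big((x-P(x))^2\big)\geq0$ and, by the identity of the first paragraph, $P(x\circ P(x))=P(P(x)\circ P(x))$ — together with the classical analysis of its equality set $\{x=x^*\in\Delta(A):P(x^2)=P(P(x)^2)\}$: this set contains $P(A)_{sa}$, is closed under the Jordan product, hence contains $D_{sa}$, and polarising $P(x^2)=P(P(x)^2)$ on it gives exactly the desired $P(d_1\circ d_2)=P(P(d_1)\circ P(d_2))$. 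I expect the closure of this equality set under $\circ$ to be the real obstacle: the naive route through linearised Jordan identities keeps collapsing to tautologies, and one must instead use Jordan structure theory. Alternatively, in the manner of the proof of Theorem~\ref{i2}, one can pass to the bidual, where $P(A)$ with the $P$-product becomes a $JW^*$-algebra spanned by its projections $q$, note that the support projection of each such $q$ lies in $D$, and apply the identities $P(qaq)=P(qP(a)q)$ and $P(a\circ q)=P(P(a)\circ q)$ from that proof.
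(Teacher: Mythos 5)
Your handling of two of the three assertions is correct and coincides with the paper's own proof: the paper also gets the $JC^*$-algebra structure on $P(A)$ by restricting $P$ to $\Delta(A)$ via Proposition \ref{delt} and applying Theorem \ref{frs}, and it also derives the conditional expectation identity from the displayed equation of Theorem \ref{i2} (taking $b = P(c) \in \Delta(A) \cap P(A) = P(A)$), exactly as you do. Your reduction of the remaining homomorphism assertion to the claim that $P(d \circ y) = 0$ for $d \in {\rm joa}(P(A))$ and $y \in {\rm joa}(P(A)) \cap \ker P$, and your verification of this for $d \in P(A)$ by symmetrising the conditional expectation identity, are also sound.

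The gap is the promotion from $d \in P(A)$ to $d \in {\rm joa}(P(A))$, and you have correctly identified it as the crux but not closed it. Your proposed mechanism --- that the equality set $\{x = x^* : P(x^2) = P(P(x)^2)\}$ of the Schwarz-type inequality is closed under the Jordan product --- is precisely the assertion whose proof you concede you cannot supply ("the naive route \ldots keeps collapsing to tautologies"), and it is not a quotable off-the-shelf fact: the classical multiplicative-domain argument applies to maps that are multiplicative into the range with its \emph{original} product, whereas here $P$ is to be a homomorphism into $P(A)$ with the \emph{$P$-product}, so the standard equality-set machinery does not transfer without essentially redoing the theorem. Your fallback via the bidual does not close it either: the identities $P(qaq) = P(qP(a)q)$ and $P(a \circ q) = P(P(a) \circ q)$ for $P$-product projections $q$ only recover the case $d \in P(A)$ (projections span $P(A)$, not ${\rm joa}(P(A))$). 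The paper closes this step by a different mechanism entirely: it invokes Remark 2 after Theorem 4.10 of \cite{BNjp} to see that $\ker P \cap {\rm joa}(P(A))$ is \emph{densely spanned by the real positive elements it contains}, and then applies Theorem 4.10 of \cite{BNjp} (the Jordan analogue of Theorem \ref{cdsproj}, resting on the ideal-like property that real positive elements of the kernel annihilate everything under the product). Note the different axis of generalisation: you fix $d$ in the range and let $y$ run over the whole kernel, whereas the paper fixes $y$ real positive in the kernel and lets the other factor run over everything, then removes the real positivity restriction on $y$ by the density of such spans. Without that density fact (or a completed equality-set argument), the homomorphism assertion remains unproved.
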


\begin{proof}   By Proposition \ref{delt}, the restriction of $P$ to the $JC^*$-algebra $\Delta(A)$ is a real positive, hence positive, contractive projection
onto $P( \Delta(A)) = P(A) \subset \Delta(A)$.  
It is also $*$-linear.    
So  $P(A)$  is a $JC^*$-algebra in the $P$-product 
by Theorem \ref{frs}.  

 By   Remark 2  after  Theorem 4.10 in \cite{BNjp},  ${\rm Ker}(P) \cap \, {\rm joa}(P(A))$ is densely spanned by the real positive elements which it contains.
Hence $P$ is a Jordan homomorphism from ${\rm joa}(P(A))$  onto  $P(A)$ with the $P$-product,  by Theorem 4.10 in \cite{BNjp}.

That $P$ is a Jordan conditional expectation follows 
 from the last assertion of Corollary \ref{i2}, since $P(A)$ is selfadjoint.  
\end{proof}

The following result of independent interest  is contained in  
Theorem 3.5 of  \cite{Rod3}.   Indeed, the
first paragraph in the lemma is the equivalence of 
(4) and (8) in that paper (taking $u=1$ in (8)).   Actually
that paper  is a revision of \cite{R98}, and the first paragraph of
our lemma may also be seen from  Lemma 1.1 and the equivalence of (i) and (iv)
in Corollary 2.7 of \cite{R98}.     
 However we do not see some of  it stated there in exactly
the form below, except in the von Neumann algebra case,
and  so we thank Angel Rodr\'iguez Palacios for allowing us to
give here a direct proof avoiding nonassociative algebra.   By a Banach algebra we mean an associative algebra with a complete norm that is submultiplicative:  
$\| x y \| \leq \| x \| \| y \|$. 

\begin{lemma}  \label{iskk}
Let $B$ be a $C^*$-algebra with identity $1$.
The possible Banach algebra products on $B$ with identity $1$
are all $C^*$-algebra products.  They are in a bijective 
correspondence with the central projections in the multiplier 
algebra of the commutator ideal of $B$.

In addition, any Banach algebra product $m$ on a unital
$JC^*$-algebra is a $C^*$-algebra product
such that $\frac{1}{2}(m(x,y) + m(y,x))$  
is the original $JC^*$-algebra product.
\end{lemma}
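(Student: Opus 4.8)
The plan is to reduce the second paragraph of Lemma~\ref{iskk} to the first paragraph. Let $A$ be a unital $JC^*$-algebra, and suppose $m$ is a Banach algebra product on the underlying Banach space of $A$ with the same identity $1$. Since $A$ is a $JC^*$-algebra, it sits as a Jordan $*$-subalgebra of a $C^*$-algebra $C$; by replacing $C$ with the $C^*$-algebra it generates we may assume $1_C = 1$. The key structural input we want is that $(A,m)$ is actually a $C^*$-algebra, at which point the first paragraph pins down exactly which products occur. The honest difficulty is that $m$ is given only as an abstract Banach algebra product on the Banach space $A$, with no a priori compatibility with the involution of $A$ or with the ambient $C^*$-structure; so the first job is to manufacture that compatibility.

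First I would observe that $(A, m)$ is a unital Banach algebra whose underlying Banach space is a $JC^*$-algebra, hence in particular an operator space (a unital operator space, with $1$ as distinguished unital element); and by Arazy--Solel (Theorem~\ref{AS}) or more directly by the Vidav--Palmer / numerical-range characterization of $C^*$-algebras, a unital Banach algebra whose norm satisfies the relevant state-space condition is a $C^*$-algebra. Concretely: the numerical range of any $a \in A$ computed in the unital Banach algebra $(A,m)$ depends only on the Banach space structure and the identity element $1$, not on the product, so it coincides with the numerical range of $a$ inside the $JC^*$-algebra $A$ (equivalently inside $C$). In particular the Hermitian elements of $(A,m)$ — those $a$ with real numerical range — are exactly the self-adjoint elements of $A$, these span $A$, and $1$ is Hermitian with $\|1\|=1$. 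By the Vidav--Palmer theorem this forces $(A,m)$ to be a $C^*$-algebra, and moreover its involution (necessarily the unique one) must send the Hermitian elements of $(A,m)$ to themselves, hence must agree with the $JC^*$-involution of $A$ on self-adjoints and therefore everywhere. So $m$ is a $C^*$-algebra product on $A$ with the original involution $*$.

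With that in hand, apply the first paragraph of the lemma (which we may assume): $m$ is one of the $C^*$-products on the $C^*$-algebra $(A,\ast)$, and all such products differ from the original only by a central projection in the multiplier algebra of the commutator ideal — in particular every $C^*$-product on $A$ has the \emph{same} Jordan symmetrization. Indeed if $m$ and the original product $xy$ are both $C^*$-products with identity $1$ on the same $*$-Banach space, then on self-adjoint elements both are determined by the order structure / functional calculus, so $m(x,x) = x^2$ for self-adjoint $x$; polarizing, $\tfrac12(m(x,y)+m(y,x)) = \tfrac12(xy+yx)$ for self-adjoint $x,y$, and since self-adjoints span, $\tfrac12(m(x,y)+m(y,x)) = x\circ y$ for all $x,y \in A$. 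That is exactly the assertion. I expect the main obstacle to be the first step — rigorously extracting a $C^*$-structure (and in particular an involution compatible with $m$) from a bare Banach algebra product on a $JC^*$-space — and the cleanest route is the numerical-range/Vidav--Palmer argument just sketched, using that numerical ranges are a Banach-space-plus-unit invariant; alternatively one can cite the relevant equivalence ((4)$\Leftrightarrow$(8) with $u=1$) in Theorem~3.5 of \cite{Rod3}, but the point of including the lemma here is to give the direct argument above.
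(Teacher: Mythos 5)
Your Vidav--Palmer step is exactly the paper's: hermitian elements of a unital Banach algebra depend only on the norm and the unit, they coincide with the selfadjoints of the $JC^*$-algebra, they span, so $(A,m)$ is a $C^*$-algebra whose involution agrees with the original one. But the rest of the proposal has two genuine gaps. First, you never prove the first paragraph of the lemma at all --- you announce that you ``may assume'' it. That paragraph (every unital Banach algebra product on a $C^*$-algebra is a $C^*$-product, in bijection with central projections in $M(J)$ for $J$ the commutator ideal) is the bulk of the result, and the paper's proof of it is not a formality: one direction constructs the product $m(x,y)=exy+e^\perp yx$ from a central projection $e\in M(J)$ and checks injectivity of the assignment; the converse uses Kadison's Banach--Stone theorem to see the identity map $(B,m)\to B$ is a Jordan $*$-isomorphism, then Kadison's decomposition of such a map into a $*$-homomorphism and a $*$-anti-homomorphism via a central projection $e$ in the bidual, giving $m(a,b)=eab+e^\perp ba$, and finally replaces $e$ by $ez$ ($z$ the support projection of $J$) to land in $M(J)$.

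Second, your route to the identity $\tfrac12(m(x,y)+m(y,x))=x\circ y$ does not work as written. You cannot ``apply the first paragraph'' to a $JC^*$-algebra: that paragraph classifies new products on a $C^*$-algebra \emph{relative to its given associative product}, and a $JC^*$-algebra carries no such product --- there is no ``original product $xy$'' to compare $m$ against. Your fallback polarization argument then rests on the claim that $m(x,x)=x^2$ for selfadjoint $x$ because squares are ``determined by the order structure / functional calculus.'' That claim is not elementary: the order on selfadjoints is indeed a norm-plus-unit invariant, but recovering the square (equivalently the spectrum, not just its convex hull, which is all the numerical range gives you) from the order and norm is essentially the content of the unital Banach--Stone theorem. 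The paper closes this step in one line by citing exactly that: the identity map from $(A,\tfrac12(m(\cdot,\cdot)+m(\cdot,\cdot)))$ to $(A,\circ)$ is a unital surjective isometry between unital Jordan operator algebras, hence a Jordan isomorphism by Theorem \ref{AS} (Arazy--Solel), so the two Jordan products coincide. You should replace your polarization paragraph with that citation (or with Kadison's theorem, since both structures are by then known to be $C^*$-algebras), and supply a proof of the first paragraph.
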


 \begin{proof} Let $J$ be the commutator ideal of $B$,
and let $z$ be the
support projection of $J$ in $B^{**}$.
Suppose that $e$ is a central projection in $M(J)$.
Then $ex = xe$ for all $x \in J$, hence for for all $x \in J^{**}$.
For $x \in B^{**}$ we have $ex = e(z + z^\perp) x = e zx = zx e$.
Similarly $xe = zx e$, so that $e$ is central in $B^{**}$.
For $x, y \in B$ we have
$$exy + e^\perp yx = e(xy -yx) + yx \in J + B \subset B.$$
Then it is easy to check that
$m(x,y) = exy + e^\perp yx$ defines a $C^*$-algebra 
product on $B$ with identity $1$.  

If $e, f$ are central projections in $M(J)$
such that $exy + e^\perp yx = fxy + f^\perp yx$ for all $x, y \in A$,
then $$e(xy-yx) = 
exy + e^\perp yx - yx = fxy + f^\perp yx - yx = f(xy-yx).$$
Hence $e = f$.

Suppose that $m$ 
is a Banach algebra product on a $JC^*$-algebra $B$ with identity $1$.
The hermitian elements
in this Banach algebra are just 
the selfadjoint elements
in the $JC^*$-algebra $B$, since hermitians in a 
unital Banach algebra
depend only on the norm and the identity element.  These hermitians 
span $B$.  Thus $B$ with product $m$ is
a unital $C^*$-algebra by the Vidav-Palmer theorem
(see e.g.\ \cite[Theorem 2.3.32]{Rod}).   
Then  $\frac{1}{2}(m(x,y) + m(y,x))$ 
is a $JC^*$-algebra product on $B$ which must be  the original $JC^*$-algebra product
by the Banach-Stone theorem (e.g.\ by Theorem \ref{AS}).

Again suppose that $C$ is a $C^*$-algebra and write $C$ for $B$ with product $m$.  The identity map is a
surjective unital isometry from $C$ onto $B$, thus
is a Jordan $*$-isomorphism $\theta$ by Kadison's Banach-Stone  theorem for
$C^*$-algebras \cite{Kad}.  By another result of Kadison 
in the same paper, there is a central projection
$e \in C^{**}$ such that
 if $f = \theta^{**}(e)$ then 
$e \theta^{**}(\cdot)$ is a $*$-homomorphism and
$f^\perp \theta^{**}(\cdot)$ is a $*$-anti-homomorphism.
In our setting $e = f$ and  $$e \theta^{**}(m(a,b))
= e \theta^{**}(a) \theta^{**}(b), \; \; \; 
e^\perp \theta^{**}(m(a,b))
= e^\perp \theta^{**}(b) \theta^{**}(a) , a, b \in B.$$
That is, 
$$e m(a,b) = e ab, \; \; \;
e^\perp m(a,b) = e^\perp ba , a, b \in B,$$
in the usual product on $B^{**}$.
Thus $m(a,b) = e ab + e^\perp ba$ for $a, b \in B$.    (The facts in this paragraph also follow from 
Theorem 3.2 of \cite{Rod3}.)
 
Note that for $x, y \in B$ we have
$$exy + e^\perp yx = e(xy -yx+yx) + e^\perp yx = ez (xy -yx) + yx =
ezxy + (ez)^\perp yx .$$ 
Thus if we replace $e$ by $ez$ we may assume that 
$e$ is a central projection in $J^{**} = zB^{**}$.
We have 
$$e(xy -yx) = (exy + e^\perp yx) -yx \in B \cap J^{**} = J,
\qquad x, y \in J.$$
 Thus $e \in M(J)$ and is central there.    
\end{proof}

\begin{remark}  One may ask if $B$ is a selfadjoint Jordan subalgebra of a unital $C^*$-algebra $D$ containing $1$,
and if  $B$ has a Banach algebra product with identity $1$, or equivalently (by the last result) a
 $C^*$-algebra product with identity $1$, then is $B$ an (associative) subalgebra of $D$?
 The answer is in the negative:  consider $B = \{  a \oplus a^\intercal  : a \in M_2 \} \subset M_4$.
 \end{remark} 

\begin{theorem} \label{ipscor2}  Let $A$ be an 
approximately unital  (associative) operator algebra,
and $P : A \to A$ a  real positive contractive projection with $P(A) \subset \Delta(A)$.  
The $P$-product on $P(A)$ is associative if and only if 
$$P(P(a)P(b)) = P(aP(b)) = P(P(a)b), \qquad a, b \in A.$$  In this case 
  $P(A)$ is a $C^*$-algebra with respect to the $P$-product,
  and $P$ viewed as a map into this latter $C^*$-algebra is real completely positive and completely contractive.
 \end{theorem}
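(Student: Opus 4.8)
The plan is to prove the two implications and the final assertions in turn. The \emph{``if'' direction} is a direct computation using nothing but the displayed identities: assuming $P(P(a)P(b)) = P(aP(b)) = P(P(a)b)$ for all $a,b\in A$, fix $x,y,z\in P(A)$ and write $x\cdot y = P(xy)$ for the $P$-product; then $(x\cdot y)\cdot z = P(P(xy)\,z)$, which by the hypothesis applied with $a=xy$, $b=z$ (and $P(z)=z$) equals $P(xyz)$, and symmetrically $x\cdot(y\cdot z) = P(x\,P(yz)) = P(xyz)$, so $\cdot$ is associative.

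For the \emph{``only if'' direction and the $C^*$-algebra structure of $P(A)$}, assume $\cdot$ is associative. First I would reduce to the case that $A$ is unital with $P(1)=1$: by Theorem \ref{extn}, $P$ extends to a real positive contraction $\widetilde P:A^1\to A^1$ with $\widetilde P(1)=1$, which is again a projection since $\widetilde P\circ\widetilde P$ agrees with $\widetilde P$ on $A$ and on $1$; moreover $\widetilde P(A^1)=P(A)+\Cdb 1\subset\Delta(A^1)$ and the $\widetilde P$-product on it is just $\cdot$ with a unit adjoined, hence still associative. So assume $1\in B:=P(A)\subset\Delta(A)$. By Corollary \ref{ipscor}, $B$ with the Jordan $P$-product $x\circ_P y:=P(x\circ y)=\tfrac12(x\cdot y+y\cdot x)$ is a $JC^*$-algebra (with unit $1$), and $P$ satisfies the Jordan conditional expectation identity $P(a\circ P(b))=P(P(a)\circ P(b))$ for $a,b\in A$. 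Now $\cdot$ is an associative, submultiplicative Banach-algebra product on this unital $JC^*$-algebra with the same identity $1$, so Lemma \ref{iskk} shows $(B,\cdot)$ is a $C^*$-algebra whose symmetrized product is $\circ_P$; its involution coincides with $*$, since the hermitian (hence selfadjoint) elements of a unital Banach $*$-algebra depend only on the norm and the identity. This establishes that $P(A)$ is a $C^*$-algebra in the $P$-product.

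It remains to upgrade the symmetric identity to $P(aP(b))=P(P(a)P(b))=P(P(a)b)$ for $a,b\in A$; equivalently, writing $c=a-P(a)\in{\rm Ker}\,P$ and $d=P(b)\in B$, to show $P(cd)=0$ (the relation $P(cd)=-P(dc)$ being precisely the Jordan conditional expectation identity). When $c=xy-P(xy)$ with $x,y\in B$ this follows from associativity by a cyclic-antisymmetry argument: setting $F(x,y,z)=P\big((xy-P(xy))z\big)$ for $x,y,z\in B$, associativity of $\cdot$ gives $F(x,y,z)=P\big(x(yz-P(yz))\big)$, and then the Jordan conditional expectation identity gives $P\big(x(yz-P(yz))\big)=-P\big((yz-P(yz))x\big)=-F(y,z,x)$, whence $F(x,y,z)=-F(y,z,x)=F(z,x,y)=-F(x,y,z)$ and $F\equiv 0$. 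For a general $c$ I would pass to the bidual $M=A^{**}$, $Q=P^{**}$: by the previous paragraph applied in $M$, $\big(Q(M),\cdot^{**}\big)$ is a von Neumann algebra, so $Q(M)$ is the weak* closed span of its $\cdot^{**}$-projections, and by weak* density and continuity of $Q$ it suffices to prove $Q(aq)=Q(Q(a)q)$ and $Q(qa)=Q(qQ(a))$ for $a\in M$ and $q$ a $\cdot^{**}$-projection. As in the proof of Theorem \ref{i2}, the support projection $s$ of $q$ in the von Neumann algebra $\Delta(M)$ is a genuine projection with $Q(s)=q$; representing the $C^*$-algebra $(Q(M),\cdot^{**})$ faithfully and applying Lemma \ref{ishe3} to the composite of $Q$ with this representation (with $e=s$, so that $Q(s)$ becomes a genuine projection), one obtains $Q(sxs)=q\cdot^{**}Q(x)\cdot^{**}q$, the analogous identity with $s^\perp$, and that $Q(s^\perp x s)$ and $Q(sxs^\perp)$ are off-diagonal for $q$ with sum equal to the off-diagonal part of $Q(x)$.

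The hard part will be to \emph{split} this last sum — i.e.\ to show that $Q(s^\perp x s)$ lies in the single corner $q^\perp Q(M)q$ — and this is exactly where associativity of $\cdot^{**}$ must be used; combined with the corner identities above and the cyclic computation it should yield the one-sided identities, completing the proof of the conditional expectation identities. Finally, these identities pass verbatim to every matrix level $P_n$ (a one-line computation), so after reducing to the unital case $P$ becomes a unital map into the $C^*$-algebra $(P(A)^1,\cdot)$ satisfying matrix-level conditional expectation identities; arguing as in the proof of Theorem \ref{crpproj} one deduces that $P$, viewed into $(P(A),\cdot)$, is completely contractive and hence (being unital into a $C^*$-algebra after unitization) real completely positive — real positivity into this $C^*$-algebra being automatic since its accretive cone is $P(A)\cap{\mathfrak r}_A$, compare Theorem \ref{stinei1}.
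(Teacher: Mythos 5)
Your ``if'' direction, the reduction to the unital case via Theorem \ref{extn}, and the identification of $(P(A),\cdot)$ as a $C^*$-algebra via Corollary \ref{ipscor} and Lemma \ref{iskk} (then passing to $P^{**}$ to get a $W^*$-algebra) all match the paper's argument. The gap is in the module property $P(aP(b))=P(P(a)P(b))=P(P(a)b)$: like the paper, you reduce it to a statement about projections $q$ in the range, but you never actually prove that statement. Your cyclic-antisymmetry computation only disposes of kernel elements of the very special form $xy-P(xy)$ with $x,y\in P(A)$, which is nowhere near all of ${\rm Ker}\,P$; and your corner analysis via Lemma \ref{ishe3} stops exactly at the decisive point --- splitting $Q(s^\perp xs)+Q(sxs^\perp)$ into the two off-diagonal corners of $q$ --- which you yourself flag as ``the hard part'' where associativity ``must be used'' and which ``should yield'' the identities. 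That is an acknowledged missing step, not a proof, and it sits at the heart of the theorem: without it neither the bimodule property nor the final complete contractivity/complete positivity assertions (which in your outline, as in the paper, are deduced from the bimodule property by the standard module-map trick) are established.

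The paper closes this gap with a quantitative trick adapted from \cite[Lemma 3.2]{BM05a} rather than a corner decomposition. For an orthogonal projection $p$ in the $W^*$-algebra $B=P(A)$, set $y=P(px)+t\,p^\perp\cdot P(px)$ for $t>0$. Associativity lets one rewrite $y$ as $P$ applied to a single element of $A$, namely $px+t\,p^\perp P(p^\perp P(px))$ (using $p^\perp\cdot(p^\perp\cdot z)=p^\perp\cdot z$), whence $\|y\|^2\leq \|px\|^2+t^2\|p^\perp\cdot P(px)\|^2$ by the row/column norm estimate from the cited lemma. On the other hand $p^\perp\cdot y=(1+t)\,p^\perp\cdot P(px)$, and multiplication by the projection $p^\perp$ is contractive in the $C^*$-algebra $B$, so $(1+t)^2\|p^\perp\cdot P(px)\|^2\leq\|px\|^2+t^2\|p^\perp\cdot P(px)\|^2$; letting $t\to\infty$ forces $p^\perp\cdot P(px)=0$, and density of the span of projections in $B$ then gives $a\cdot P(x)=P(ax)$ for all $a\in B$. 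You would need to supply an argument of this strength at the step you left open for your outline to become a proof.
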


\begin{proof}    If the centered equation holds then $$P(P(P(a)P(b))P(c)) = P(P(a)P(b)P(c)) = P(P(a) P(P(b)P(c))).$$   So the $P$-product on $P(A)$ is associative. 

Suppose that the $P$-product on $P(A)$ is associative.  By Theorem \ref{extn} we may assume that $A$ is
unital and $P(1) = 1$. 
 (Note that if $A$ is
nonunital or $P(1) \neq 1$ then $\tilde{P}(A^1) \subset \Delta(A^1)$; and if the new product on $P(A)$ is associative,
then so is  the new product on $\tilde{P}(A^1)$.  
Here $\tilde{P}$ is the extension to $A^1$ from Theorem \ref{extn}.)    
Write $B$ for $P(A)$ in the $P$-product.
This is a unital Banach algebra.  By Corollary \ref{ipscor}  $P(A)$ is also a unital $JC^*$-algebra.  By the proof of Lemma  \ref{iskk},  $B$ is a unital $C^*$-algebra. By considering $P^{**}$ and $A^{**}$ we 
may suppose that $B$ is a $W^*$-algebra.

Write $\cdot$ for the $P$-product.  Claim:  $a \cdot P(x) =  P(ax)$ for all $a \in B, x \in A$.  
To prove this, as in the proof of \cite[Lemma 3.2]{BM05a} it suffices to show that  
 \begin{equation} \label{pperp} p^\perp \cdot P(p x) \; = \; 0 ,  \qquad x \in X , \end{equation}
for all orthogonal projections $p \in B$.  For then we get
$p \cdot P(x)  =  P(px)$  as in that cited proof, and since $B$ is densely spanned by its
projections (as is any $W^*$-algebra), we conclude that $a \cdot P(x) =  P(ax)$ for all $a \in B$.  

To prove (\ref{pperp}), we adjust the argument
in the last cited proof.
Let $$y = P(px) + t P(p^\perp P(px)) = P(px) + t p^\perp \cdot P(px) , \qquad 
t \in \Rdb .$$   By associativity, $y = 
P(px + p^\perp P(p^\perp P(px)))$.   
Now $$\| y \|^2 \leq \| px + t p^\perp P(p^\perp P(px) ) \|^2,$$ which as in the last cited proof is
dominated by 
$\Vert p x \Vert^2 + t^2 \Vert P(p^\perp P(px)) \Vert^2.$
 On the other hand, again writing
$\cdot$ for the $P$-product, we have
$$p^\perp  \cdot y = p^\perp \cdot P(px) + t p^\perp \cdot P(px)
= (1+t) p^\perp \cdot P(px) .$$
It follows that
$$(1+t)^2 \Vert p^\perp  \cdot P(px) \Vert^2 \leq
\Vert p x \Vert^2 + t^2 \Vert p^\perp \cdot P(px) \Vert^2.$$
This implies that $2t \Vert p^\perp \cdot P (px) \Vert^2 \leq 
\Vert p x \Vert^2$
for all $t > 0$, so that $p^\perp \cdot P(px) = 0$.  

The Claim says that $P$ is a left $B$-module map;
similarly it is a right $B$-module map, giving the 
last assertion of our statement.
It follows by a standard trick similar to e.g.\ 1.2.6 in 
\cite{BLM} (see e.g.\
\cite[Proposition 8.6]{Pau}) 
 that $P$ is completely contractive.
Since $P(1) = 1$, $P$ is real completely positive by 
(the matrix versions of) considerations in the paragraph 
before Corollary 2.2 from \cite{BNjp}.  
\end{proof}

\begin{remark}   By 
 the proof above,  one may relax the associativity condition in the last theorem 
to: $p \cdot (p \cdot b) = p \cdot b$ for $b \in P(A)$
and for $p \in P(A)$ which are projections  in the $P$-product.

We also remark that a real positive completely contractive projection
need not be real completely positive.  Indeed   
$P(a,b) = (a , a^\intercal /2 )$ on $M_2 \oplus M_2$ is a counterexample
(we thank R. R. Smith for this).  Note that this completely contractive positive projection
does not even satisfy the Kadison-Schwarz inequality $P(a^*a) \leq
P(a)^* P(a)$. 

The $C^*$-algebra structure on $P(A)$ in the conclusion of the theorem 
may not induce the operator space structure on $P(A)$ inherited from $A$.  To see this consider 
$P(a,b) = (a , a^\intercal )$ on $M_2 \oplus M_2$.
\end{remark}

The following result  is inspired by the selfadjoint case due to Effros and St{\o}rmer (see e.g.\ \cite[Lemma 1.4]{ES}).
If $A$ is a unital operator algebra, and $P$ is  a unital contractive or
completely contractive projection
on $A$, define
 $$N = \{ x \in A : P(xy) = P(yx) = 0 \; \text{for all} \; y \in A \}.$$
If $A$ or $P$ is not unital, but $P$ is also real positive,
then we may extend $P$ by Theorem \ref{extn} to a unital contractive projection
on $A^1$, where $A^1$ is a unitization with $A^1 \neq A$, and 
set  $$N = \{ x \in A : P(xy) = P(yx) = 0 \; \text{for all} \; y \in A^1 \}.$$ 
   Then $N$ is clearly a closed ideal in $A$, and is also a subspace of Ker$(P)$.  Define
$$B = \{ x \in A : P(xy) = P(P(x) P(y)) \, {\rm and} \, P(yx)  = P(P(y) P(x)) \; \text{for all} \; y \in A \}.$$  Note that $N \subset B$ since e.g.\
 if $x \in N \subset {\rm Ker}(P)$
then $P(xy) = 0 = P(P(x) P(y))$ for all $y \in A$.
Note too that $1 \in B$ if $A$ is unital and $P(1) = 1$.

\begin{theorem} \label{AB} If $P$ is a  real positive contractive  projection on an approximately unital operator algebra $A$, and 
$N, B$ are defined as above, 
then  $P(A) \subset B$ if and only if
$$P(P(a) b) = P(P(a) P(b)) = P(a P(b))  \; \text{for all} \; a , b \in A.$$
That is, if and only if $P$ is a conditional expectation onto $P(A)$
 with respect to
the $P$-product.  This is also equivalent to $B = P(A) + N$.
If these hold then  $P(A)$
with the $P$-product is isometrically isomorphic
 to an operator algebra, $B$  is a subalgebra of $A$ containing
$P(A)$, and $P$ is a homomorphism from $B$ onto  $P(A)$
with the $P$-product.
\end{theorem}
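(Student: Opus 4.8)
The plan is to exploit the characterization results already available for contractive real positive projections, together with the structural facts about $N$ and $B$, and to organize the argument around a chain of equivalences. First I would observe that the implication from the centered ``conditional expectation'' identity to $P(A) \subset B$ is immediate from the definition of $B$: if $a \in A$ and we write $x = P(a) \in P(A)$, then for all $y \in A$ we have $P(xy) = P(P(a)y) = P(P(a)P(y)) = P(P(x)P(y))$ using $P(x) = x$, and symmetrically $P(yx) = P(P(y)P(x))$, so $x \in B$. Conversely, if $P(A) \subset B$, then taking $x = P(a)$ in the definition of $B$ gives exactly $P(P(a)y) = P(P(a)P(y))$ for all $y$, and similarly on the other side; setting $y = b$ and also $y = P(b)$ (and using idempotency $P(P(a)) = P(a)$, $P(P(b)) = P(b)$) yields $P(P(a)b) = P(P(a)P(b)) = P(aP(b))$ for all $a, b$, which is the asserted conditional expectation property with respect to the $P$-product.

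Next I would handle the equivalence with $B = P(A) + N$. One inclusion is cheap: $P(A) \subseteq B$ and $N \subseteq B$ (the latter was already noted in the text, since $N \subseteq \mathrm{Ker}(P)$), so under the hypothesis $P(A) \subset B$ we get $P(A) + N \subseteq B$. For the reverse inclusion $B \subseteq P(A) + N$, given $x \in B$ I would write $x = P(x) + (x - P(x))$ and show $x - P(x) \in N$. For this, for any $y \in A^1$ I must check $P((x-P(x))y) = 0$ and $P(y(x-P(x))) = 0$; since $x \in B$, $P(xy) = P(P(x)P(y))$ and, using the conditional-expectation identity (which we may invoke once we have shown $P(A) \subset B \Leftrightarrow$ the identity, i.e.\ run this part only in the presence of that identity), $P(P(x)y) = P(P(x)P(y))$, so $P((x - P(x))y) = 0$; the other side is symmetric, and the case $y = 1$ is handled separately using $P(1) = 1$ after unitization or directly. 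Conversely, if $B = P(A) + N$ then certainly $P(A) \subseteq B$, closing the loop. The mild subtlety here is bookkeeping about whether $A$ is unital; I would dispatch this at the outset by invoking Theorem \ref{extn} to reduce to $A$ unital with $P(1) = 1$, exactly as in the definition of $N$ and $B$, and note $P(A^1) = P(A) \oplus \mathbb{C}1$ so nothing is lost.

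For the last batch of conclusions, assume the conditional-expectation identity holds. That $P(A)$ with the $P$-product is isometrically isomorphic to an operator algebra follows from Theorem \ref{cdsproj} once I verify its hypothesis, namely that $\mathrm{Ker}(P)$ is densely spanned by the real positive elements it contains; but this is \emph{not} assumed here, so instead I would argue directly: the identity $P(ab) = P(P(a)P(b)) = P(aP(b)) = P(P(a)b)$ says precisely that $P$ restricted to a multiplication $P(a)\cdot P(b) := P(P(a)P(b))$ makes $P(A)$ a quotient-style algebra, and more to the point it says $P : A \to (P(A), \cdot)$ is an algebra homomorphism onto its range when $A$ carries its own product — actually I should be careful: $P$ is a homomorphism from $B$, not from all of $A$. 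On $B$, for $x, y \in B$ we have $xy \in B$ (a short check: $P((xy)z) = P(P(xy)z)$ using $xy$-type computations and associativity in $A$ together with the identity), $P(xy) = P(P(x)P(y)) = P(x)\cdot P(y)$, so $P|_B$ is multiplicative into $(P(A), \cdot)$; hence $B$ is a subalgebra of $A$ containing $P(A)$ and $P : B \to P(A)$ is a surjective homomorphism. That $(P(A), \cdot)$ is (isometrically isomorphic to) an operator algebra can then be seen because it is the quotient $B/(B \cap \mathrm{Ker}(P)) = B/N$ of the operator algebra $B$ by a closed ideal, hence an operator algebra by the standard quotient theorem for operator algebras (see \cite{BLM}), and the quotient norm agrees with the norm on $P(A)$ because $P$ restricted to $B$ is a contractive projection with kernel $N$ splitting $B = P(A) \oplus N$ isometrically in the relevant sense; alternatively one invokes Theorem \ref{ccproj}(2) applied to the contractive — I would need completely contractive, so here I would instead lean on Theorem \ref{cdsproj} or argue via \cite{BNjp}. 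I expect the main obstacle to be exactly this point: upgrading ``$P(A)$ is an algebra in the $P$-product'' to ``$P(A)$ is (completely isometrically) an \emph{operator} algebra'' without the density hypothesis of Theorem \ref{cdsproj}; the cleanest route is probably to realize $P(A) \cong B/N$ as an operator-algebra quotient, which requires checking $B$ is closed (clear, as an intersection of kernels of bounded maps) and $N$ is a closed two-sided ideal in $B$ (already observed), and that the quotient norm matches — the last being where one must work, using that $P$ is a contractive idempotent on $B$ with range complementing $N$.
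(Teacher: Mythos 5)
Your proposal is correct and follows essentially the same route as the paper's proof: the first equivalence from the definition of $B$, the decomposition $B=P(A)+N$ via the computation $P((x-P(x))y)=0$, the subalgebra check for $B$, and the identification $P(A)\cong B/N$ as an operator algebra quotient (the quotient-norm point you flagged is handled exactly as you suspected, since $P$ is a contractive projection on $B$ with kernel $N$). The paper cites the analogous isometry $B/N\cong P(A)$ from Lemma 4.4 of the Blecher--Neal paper and the standard quotient theorem for operator algebras, just as you do.
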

\begin{proof}  The first  `if and only if' follows from the definition of 
$B$.   This is also equivalent to $B = P(A) + N$,
since $N \subset B$, and  if $P(A) \subset B$ and
$a \in B$ then $$P((a - P(a))b) = P(P(a)P(b)) -
P(P(a) b) = 0 , \qquad b \in A .$$  Similarly, $P(b(a - P(a))) =0$.  
Hence $a = a - P(a) + P(a) \in N + P(A)$.  

In this case $B$ is a subalgebra of $A$ since e.g.\ if $a, b \in B, c \in A$
then
$$P(abc) = P(P(a)P(bc)) = P(P(a)P(P(b)P(c))) = P(P(a) P(bP(c)),$$
and by the centered equation in the theorem statement,
 $$P(P(ab) P(c)) = P(ab P(c)) = P(P(a) P(bP(c)).$$
  Thus also the $P$-product on $P(A)$ is associative. 
Since $B/N \cong P(A)$ as in the proof of Lemma 4.4 in \cite{BNjp}, and quotients of operator algebras are  operator algebras
\cite[Proposition 2.3.4]{BLM},  we see  that  $P(A)$
with the $P$-product is isometrically isomorphic
 to an operator algebra.  The last assertion again follows from the definition of $B$. 
\end{proof} 

\begin{remark}  Note that $N = {\rm Ker}(P)$ 
if and only if  
${\rm Ker}(P)$ is an ideal.  The latter
holds (by the associative algebra variant of 
Lemma 4.4 in \cite{BNjp}) if and only if   $B = A$, and then all of the conclusions of the last theorem hold.

If $P$ is real completely positive and completely contractive then $$P(P(a) b) = P(P(a) P(b)) = P(a P(b)), \qquad a, b \in A,$$  as is proved in \cite[Section 2]{BNp}, so that the conclusions of the last theorem hold. 
\end{remark}

\begin{corollary} \label{cdsproj11}  If   $A$ is an  approximately unital  (associative) operator algebra and $P : A\to A$ is  contractive and real positive, and is a 
conditional expectation  onto $P(A)$ equipped with the product $P(ab)$
(that is, if  $P(P(a) b) = P(P(a) P(b)) = P(a P(b))$ for all $a , b \in A$), then 
$P(A)$ is an operator algebra with this product.
\end{corollary}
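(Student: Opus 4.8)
The plan is to obtain this as an immediate consequence of Theorem \ref{AB}. The hypothesis of the corollary --- that $P(P(a) b) = P(P(a) P(b)) = P(a P(b))$ for all $a, b \in A$ --- is, word for word, one of the equivalent conditions occurring in the statement of Theorem \ref{AB}: it is exactly the assertion there that ``$P$ is a conditional expectation onto $P(A)$ with respect to the $P$-product'' (equivalently, $P(A) \subset B$, equivalently $B = P(A) + N$, where $N$ and $B$ are the sets built from $P$ in the run-up to that theorem, using Theorem \ref{extn} to pass to a unitization $A^1 \neq A$ when $A$ or $P$ is not unital). Moreover the ambient hypotheses match precisely: $A$ is an approximately unital (associative) operator algebra and $P$ is a real positive contractive projection.

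So the proof is a one-line invocation. First I would recall the definitions of $N$ and $B$ and observe that the centered display in the corollary's hypothesis is the condition ``$P(A) \subset B$'' of Theorem \ref{AB}. Then, applying Theorem \ref{AB} under this condition, its conclusion states that $P(A)$ equipped with the $P$-product is isometrically isomorphic to an operator algebra --- which is exactly the assertion of the corollary. (As part of the same argument one also gets that the $P$-product is associative and that $P$ is a homomorphism of the subalgebra $B$ of $A$ onto $P(A)$, but these are not needed for the stated conclusion.)

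There is essentially no obstacle here; the only point meriting a moment's care is to confirm that the hypothesis as phrased in the corollary coincides with the listed equivalent condition in Theorem \ref{AB} and is not merely a formally weaker variant --- and it does, since both amount to the symmetric pair of identities $P(P(a)b) = P(P(a)P(b))$ and $P(aP(b)) = P(P(a)P(b))$ for all $a, b \in A$. Alternatively, one could give a short self-contained argument paralleling the relevant paragraph in the proof of Theorem \ref{AB}: from the hypothesis, check directly that $B$ is a subalgebra of $A$ (hence the $P$-product on $P(A)$ is associative), note that $N$ is a closed ideal of $A$ with $B/N \cong P(A)$, and conclude via the fact that quotients of operator algebras are operator algebras \cite[Proposition 2.3.4]{BLM}. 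But the appeal to Theorem \ref{AB} is the cleanest route.
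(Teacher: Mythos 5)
Your proposal is correct and matches the paper's intent: Corollary \ref{cdsproj11} is stated immediately after Theorem \ref{AB} precisely as the direct specialization you describe, with the hypothesis being verbatim the equivalent condition ``$P$ is a conditional expectation onto $P(A)$ with respect to the $P$-product,'' whose listed consequence is that $P(A)$ with the $P$-product is isometrically isomorphic to an operator algebra. No further argument is needed.
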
 

This corollary shows that $P$ being a conditional expectation  for the $P$-product implies that the $P$-product is an operator algebra product.  The converse is false as we observed below Theorem  \ref{Sak}.    
 
It is natural to ask if similar results hold for Jordan operator algebras, and in particular 
does $P$ being a conditional expectation  for the Jordan $P$-product   imply  that $P(A)$ is a  Jordan operator algebras
in the $P$-product.   The latter is an interesting question that we have not yet been able to solve.  
  If one goes through the proof above 
 with a (without loss of generality) contractive  unital projection $P$ on a unital Jordan operator algebra $A$,
 one defines $$N = \{ x \in A : P(x \circ y)  = 0 \; \text{for all} \; y \in A \}.$$ 
    Then $N$ is a subspace of Ker$(P)$, and is a  closed `Jordan ideal' in 
$$B = \{ x \in A : P(x \circ y) = P(P(x) \circ P(y)) \; \text{for all} \; y \in A \}.$$  
Again $B = P(A) + N$, and $P$ restricts to  a  contractive  unital projection and `Jordan homomorphism' from 
$B$ onto $P(A)$ with kernel $N$.    So $P(A) \cong B/N$ isometrically and Jordan isomorphically.   However, two issues arise.
First it is not clear that $B$ is actually a Jordan (operator) algebra.   Second, even if it were we do not known that
a quotient of a Jordan operator algebra by a closed Jordan ideal is a Jordan operator algebra.  This is a big open problem in the 
subject.  If it is false then this suggests that perhaps the study of closed Jordan subalgebras of such quotients may be an interesting
direction of research.    The usual proof that the quotient of an  operator algebra by a closed ideal is an operator algebra, uses the so-called 
BRS characterization of operator algebras (see \cite[Theorem 2.3.2 and Proposition 2.3.4]{BLM}).  There is a somewhat similar theorem for Jordan operator algebras, namely \cite[Theorem 2.1]{BWj}.
If one attempts the natural proof (analogous to the operator algebra quotient proof) one sees that it works for quotients of a Jordan operator algebra $B$ by a closed Jordan ideal $N$
such that $B$ is contained in an operator algebra $A$ with $B/N \subset A/[ANA]$   completely isometrically.   Here $[ANA]$ is the closed 
associative ideal in $A$ generated by $N$.    That is, in this case $B/N$ is a Jordan operator algebra.    We do not know unfortunately when 
$B/N \subset A/ [ANA]$.   This would require $B \cap [ANA] = N$ at the very least.  Perhaps there is a clever choice of 
$A$ that will do this, perhaps even a $C^*$-algebra. 

In particular, in the situation in the last paragraph, but  now assuming in addition that  $N = (0)$, one may ask
if $B = P(A)$  a Jordan subalgebra of $A$, 
and if  $P$ is a  Jordan conditional expectation?

\begin{corollary} \label{ipscor3}  Let $A$ be an
approximately unital  (associative) operator algebra,
and $P : A \to A$ a  real positive contractive projection onto an
associative subalgebra of $A$, with $P(A) \subset \Delta(A)$.   Then
 $P$ is completely contractive and real completely positive as a map into $A$,
 $D = P(A)$ is a $C^*$-algebra in the $P$-product, and $P$ is a $D$-bimodule map:
   $P(P(a)P(b)) = P(aP(b)) = P(P(a)b)$ for all $a, b \in A.$
 \end{corollary}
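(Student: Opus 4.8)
The plan is to deduce this from Theorem~\ref{ipscor2}, exploiting the fact that the extra hypothesis ``$P(A)$ is an associative subalgebra of $A$'' forces the $P$-product on $P(A)$ to be not merely associative but actually equal to the product inherited from $A$. So the proof is essentially an identification of bookkeeping, combining Proposition~\ref{delt} and Theorem~\ref{ipscor2}.

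First I would observe that if $x, y \in P(A)$ then $xy \in P(A)$, since $P(A)$ is a subalgebra, so $P(xy) = xy$; thus the $P$-product on $P(A)$ is exactly the restriction of the ambient associative product of $A$, and in particular it is associative. Next, Proposition~\ref{delt} gives $P(\Delta(A)) = \Delta(P(A)) = \Delta(A) \cap P(A)$, which under our hypothesis $P(A) \subseteq \Delta(A)$ reads $\Delta(P(A)) = P(A)$; since $\Delta(P(A))$ consists precisely of those $x \in P(A)$ with $x^* \in P(A)$, this says $P(A)$ is closed under the involution of $\Delta(A)$. Being also a norm-closed associative subalgebra of the $C^*$-algebra $\Delta(A)$, the range $D = P(A)$ is therefore a $C^*$-subalgebra of $\Delta(A) \subseteq A$, with product, involution, and all matrix norms inherited from $A$.

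Now I would invoke Theorem~\ref{ipscor2}. Since the $P$-product on $P(A)$ is associative, that theorem yields $P(P(a)P(b)) = P(aP(b)) = P(P(a)b)$ for all $a, b \in A$, shows that $D = P(A)$ is a $C^*$-algebra in the $P$-product, and shows that $P$, viewed as a map into this $C^*$-algebra, is completely contractive and real completely positive. By the previous paragraph this $C^*$-algebra $D$ is a $C^*$-subalgebra of $A$; in particular the inclusion $D \hookrightarrow A$ is a complete isometry and carries ${\mathfrak r}_{M_n(D)}$ into ${\mathfrak r}_{M_n(A)}$ for every $n$. Composing, $P : A \to A$ is itself completely contractive and real completely positive, as asserted. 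Finally, for $a, b \in A$ we have $P(a)P(b) \in P(A)$, hence $P(P(a)P(b)) = P(a)P(b)$; substituting this into the three-term identity gives $P(P(a)b) = P(a)P(b) = P(aP(b))$, which is exactly the statement that $P$ is a $D$-bimodule map.

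The one point that requires genuine care, as opposed to routine manipulation, is the identification in the second and third paragraphs: Theorem~\ref{ipscor2} supplies an abstract $C^*$-algebra structure on $P(A)$, and the remark immediately following that theorem warns that in general this structure need not induce the operator space structure that $P(A)$ inherits from $A$. What rescues us here is precisely that $P(A)$, being an associative $*$-closed subalgebra of $\Delta(A)$, already sits inside $A$ as a concrete $C^*$-subalgebra, so the abstract and inherited structures coincide; this is what makes the phrase ``as a map into $A$'' in the statement legitimate.
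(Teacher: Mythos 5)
Your proof is correct and takes essentially the same route as the paper, which derives this corollary directly from Theorem~\ref{ipscor2} (after noting that the subalgebra hypothesis makes the $P$-product the inherited, hence associative, product). Your extra care in checking, via Proposition~\ref{delt}, that $P(A)$ is a $*$-closed subalgebra of $\Delta(A)$ and hence that the abstract $C^*$-structure from Theorem~\ref{ipscor2} coincides with the inherited one is exactly the point needed to read ``as a map into $A$'' in the conclusion, and it correctly sidesteps the pitfall flagged in the remark after that theorem.
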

 
The last result is a corollary of Theorem \ref{ipscor2}.

We now turn to bicontractive projections.  
The following result   shows what happens in the case of selfadjoint Jordan operator algebras ($JC^*$-algebras). 
It is the `solution to the bicontractive and symmetric projection problems' for $JC^*$-algebras, essentially 
due to deep work of Friedman and Russo, and St{\o}rmer \cite{FRAD, FRceJ, ST82}. 
We recall that  $P$ is {\em bicontractive} if $\| P \|, \| I - P \|$ are contractions, and {\em symmetric}  if $\|I - 2P \| \leq 1$. 
Some of this hinges on Harris's Banach--Stone type theorem
for $J^*$-algebras \cite{Har2}.  The following 
is essentially very well known (see the references above), but 
we do not know of a reference besides the work which we are surveying (see \cite[Theorem 5.1]{BNj}) which has all of these assertions, or is in the formulation  we give:  

\begin{theorem} \label{goq}  If $P : A \to A$ is a    
projection on a $JC^*$-algebra $A$ then 
$P$ is bicontractive  if and only if  $P$ is symmetric.  Moreover  $P$ is bicontractive and 
positive  if and only if  there exists a central projection $q \in M(A)$ (indeed $q \circ a = qaq \in A$ for all $a \in A$) such that
 $P = 0$ on $q^\perp A q^\perp$, and there exists a Jordan $*$-automorphism $\theta$ of $qAq$   of period 2 (i.e.\ $\theta \circ \theta = I$)
so that $P = \frac{1}{2}(I + \theta)$ on $qAq$.     Finally, $P(A)$ is a $JC^*$-subalgebra of $A$, and 
$P$ is a Jordan conditional expectation.  
\end{theorem}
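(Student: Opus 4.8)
The plan is to reduce to the case where $A$ is unital, extract there a structural description of $P$, and then read off all the assertions. One implication of the first equivalence is elementary: if $P$ is symmetric, i.e.\ $\|I-2P\|\le 1$, then $P=\frac{1}{2}\bigl(I-(I-2P)\bigr)$ and $I-P=\frac{1}{2}\bigl(I+(I-2P)\bigr)$ are both contractions, so $P$ is bicontractive; the converse is equally easy once the structural description is available, since if $q\in M(A)$ is a central projection with $P=0$ on $q^{\perp}Aq^{\perp}$ and $P=\frac{1}{2}(I+\theta)$ on $qAq$ for a period-$2$ Jordan $*$-automorphism $\theta$, then $P$ is a positive projection with $\|P\|\le 1$ and $\|I-2P\|=\max(\|\theta\|,1)=1$. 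So the real content is ``bicontractive $\Rightarrow$ symmetric'' and the structure theorem for a positive bicontractive $P$. For both I would first pass to the bidual $A^{**}$, a unital $JW^{*}$-algebra (see the remarks before Theorem~\ref{bs1}), with $P$ replaced by $P^{**}$: biadjoints of contractions are contractions, so $P^{**}$ is a projection which is bicontractive, resp.\ positive and bicontractive, when $P$ is, and one descends to $A$ afterwards by the usual $A\subseteq A^{**}$ arguments (in the spirit of the proof of Theorem~\ref{Sak}; the verification that the projection $q$ one produces actually lies in $M(A)$ in the nonunital case requires some care, as the excerpt's discussion of quasimultipliers suggests, but the essential content is the unital case). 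So assume $A$ unital, put $q=P(1)$ and $\sigma=2P-I$; then $\sigma^{2}=I$ since $P^{2}=P$, and $\sigma$ is $*$-preserving since $P$ is positive.

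The crux, and what I expect to be the main obstacle, is that $\sigma$ is a surjective linear isometry of $A$, equivalently $\|I-2P\|\le 1$, i.e.\ ``bicontractive $\Rightarrow$ symmetric''; this is where the $JC^{*}$-structure is genuinely needed. I would deduce it from the theory of contractive projections on $J^{*}$-algebras ($JB^{*}$-triples) of Friedman and Russo: $P(A)$ is (triple isomorphic to) such an algebra, and when $I-P$ is also contractive the complementarity of the ranges of $P$ and $I-P$ forces the resulting grading $A=P(A)\oplus\ker P$ to be respected by the triple product, so $\sigma$, being $+1$ on $P(A)$ and $-1$ on $\ker P$, is a triple automorphism, hence a surjective linear isometry. (This is essentially the content of the cited work of Friedman--Russo and St{\o}rmer.) Granting this, Harris's Banach--Stone theorem for $J^{*}$-algebras \cite{Har2} (Kadison's theorem \cite{Kad} in the $C^{*}$-case) presents $\sigma$ in terms of a Jordan $*$-automorphism and the unitary $u=\sigma(1)=2q-1$; since $q=P(1)$ is self-adjoint so is $u$, hence $u^{2}=1$, hence $q^{2}=q$, so $q$ is a projection. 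Feeding the $*$-preservation of $\sigma$ (together with $\sigma^{2}=I$) back through this description then forces the part of $A$ off the $q$-diagonal to vanish, i.e.\ $q$ is central in $A$; moreover $\sigma(q)=2P(q)-q=q$ since $q\in P(A)$, so $\sigma$ fixes $q$ while $q^{\perp}=1-q$ lies in its $-1$-eigenspace.

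Now the structure follows by bookkeeping. Centrality gives $A=qAq\oplus^{\infty}q^{\perp}Aq^{\perp}$ with no off-diagonal corners. On $qAq$, $\sigma$ restricts to a surjective linear isometry $\theta$ that fixes the unit $q$ and is $*$-preserving, hence (by the unital Banach--Stone theorem, cf.\ Theorem~\ref{AS}) a Jordan $*$-automorphism, with $\theta^{2}=I$, and $P=\frac{1}{2}(I+\theta)$ on $qAq$. On $q^{\perp}Aq^{\perp}$ we have $P(q^{\perp})=P(1)-P(q)=q-q=0$, so for $0\le x\le q^{\perp}$ positivity of $P$ gives $0\le P(x)\le P(q^{\perp})=0$, hence $P(x)=0$; since positive elements span $q^{\perp}Aq^{\perp}$, $P=0$ there. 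This is exactly the asserted description (and re-confirms $\|I-2P\|=1$). Finally, $P(A)=P(qAq)=\{x\in qAq:\theta(x)=x\}$, the fixed-point set of the Jordan $*$-automorphism $\theta$, which is a $JC^{*}$-subalgebra of $A$; and since $P(A)$ is now known to be a Jordan subalgebra, the last assertion of Theorem~\ref{frs} shows $P$ is a Jordan conditional expectation.
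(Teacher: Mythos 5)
The paper does not actually prove Theorem \ref{goq}: it is stated as a known result, assembled from Friedman--Russo \cite{FRAD,FRceJ} and St{\o}rmer \cite{ST82} in \cite[Theorem 5.1]{BNj}, so there is no in-text proof to compare yours against line by line. Your reconstruction follows the same architecture as that literature and is essentially correct. You rightly isolate the one genuinely hard step --- that bicontractivity alone forces $\sigma=2P-I$ to be a surjective isometry, i.e.\ ``bicontractive $\Rightarrow$ symmetric'' --- and this is precisely the content of the Friedman--Russo bicontractive projection theorem for $JB^*$-triples; your one-sentence gloss of why the triple product respects the grading is not a proof, but deferring to \cite{FRceJ} here is exactly what the paper itself does. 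The downstream bookkeeping is correct: positivity gives $q=P(1)\geq 0$; the Banach--Stone form makes $u=2q-1$ a selfadjoint unitary, so $q$ is a projection fixed by $\sigma$; $P(q^{\perp})=0$ together with positivity and the spanning of $q^{\perp}Aq^{\perp}$ by its positive part kills that corner; Theorem \ref{AS} identifies $\theta$; the range is the fixed-point $JC^*$-algebra of $\theta$; and Theorem \ref{frs} then yields the conditional expectation property. Two spots are thinner than they should be. First, the centrality of $q$: ``feeding the $*$-preservation back through'' should be made explicit --- write $\sigma=u\,\pi(\cdot)$ via Kadison/Harris \cite{Kad,Har2} inside a generated $C^*$-algebra (or the bidual); then $\sigma(x^*)=\sigma(x)^*$ reads $u\,\pi(x)^*=\pi(x)^*\,u$ for all $x$, so the selfadjoint unitary $u=2q-1$ commutes with all of $\pi(A)=A$, whence $q$ is central. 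Second, the descent from $A^{**}$ to $A$, in particular that the projection you produce lies in $M(A)$ with $qaq\in A$, is acknowledged but not carried out; that is where the multiplier/quasimultiplier technology the paper alludes to is genuinely needed. Neither point is a wrong turn, but both would have to be written out for a complete proof.
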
  

We remark that subsequently the variant of this theorem with $JB^*$- instead of $JC^*$-algebras has been proved in
\cite[Theorem 5.7]{CR20}.

We have a complete characterization of symmetric real positive projections on Jordan operator algebras, which relies on the very recent
Banach-Stone theorem 
\ref{bs1} above.   The point is that if $P$ is symmetric then it is very easy to show that $v = I - 2P$ is a  surjective isometric
isomorphism of $A$ onto $A$ which has period 2 (that is, $v \circ v = I_A$).    Then one applies the Banach-Stone theorem 
\ref{bs1} above.     This is the main ingredient in the following result from \cite{BNjp}: 

\begin{theorem} \label{symm} Let $A$ be an approximately unital  (Jordan) operator algebra,
and $P : A \to A$ a symmetric  real positive projection.
Then the range of $P$  is an approximately unital 
Jordan subalgebra of $A$ and 
$P$ is a Jordan conditional expectation.  Moreover, $P^{**}(1) = q$ is a projection in  $JM(A)$.

Set $D  = q Aq$,  the  hereditary subalgebra (`corner')  of $A$ supported by $q$,   which contains $P(A)$.
There exists a period $2$ surjective  
isometric Jordan homomorphism $\pi : D \to D$, such that 
   $$P =  \frac{1}{2} (I + \pi) \; \; \;
{\rm on} \; D,$$ 
and $P$ is zero on the `other three corners' of $A$
(that is, on $q^\perp A q^\perp + q^\perp A  q+ qA q^\perp$).   
\end{theorem}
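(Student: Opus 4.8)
The key observation, as the paragraph preceding the statement indicates, is that $v = I - 2P$ is a period-$2$ surjective linear isometry of $A$ onto $A$. Indeed $v$ is linear and bijective with $v^2 = (I-2P)^2 = I - 4P + 4P^2 = I$ since $P$ is idempotent; and $v$ is isometric precisely because $P$ is symmetric, $\|I-2P\|\le 1$, forcing $\|v^{-1}\|=\|v\|\le1$. So I would begin by invoking Theorem \ref{bs1} (the Banach--Stone theorem for nonunital isometric surjections of approximately unital Jordan operator algebras): applied to $v: A \to A$ with $B = A$ sitting inside a generated associative operator algebra $D_0$, it produces a unitary $u \in \Delta(JM(A))$ and an isometric Jordan homomorphism $\rho : A \to D_0$ with $v = \rho(\cdot)u$ and $\rho(A) = Au^*$. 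The next step is to extract that $q := P^{**}(1)$ is a projection in $JM(A)$: since $v^{**}(1) = 1 - 2q$ and $v$ is an isometric surjection, $v^{**}$ is a surjective isometry of $A^{**}$, so $1-2q = v^{**}(1)$ is a unitary in the $JW^*$-algebra $\Delta(A^{**})$; being also selfadjoint (as $q = P^{**}(1)$ with $P$ real positive, hence $q\in\mathfrak r_{A^{**}}$ and $1-2q$ selfadjoint) it is a symmetry, so $q$ is a projection. That $q\in JM(A)$, i.e.\ $q\circ A\subseteq A$, is exactly the content flagged as requiring the quasimultiplier analysis; I would cite it from Theorem \ref{bs1}'s proof machinery (the unitary $u$ above, combined with $v^{**}(1)=1-2q$, places $q$ in $JM(A)$).

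**Reducing to the corner.** With $q\in JM(A)$ a projection, set $D = qAq$, which is an approximately unital Jordan (in fact hereditary) subalgebra of $A$. I would next show $v$ leaves $D$ invariant and kills the other three corners. Since $v$ is a Jordan homomorphism up to the unitary $u$ and $v^{**}$ fixes the decomposition of $A^{**}$ by $q$ (because $v^{**}(1) = 1-2q$ commutes appropriately), one checks $v^{**}(qxq) \in qA^{**}q$ and $v^{**} = -I$ on the span of $q^\perp x q^\perp$, $q^\perp x q$, $q x q^\perp$ — the last because, e.g., for $b$ in one of those corners $P(b)$ lies in $q A^{**} q \cap (\text{that corner}) = 0$ using that $\Delta(A)\cap P(A)$ and more generally $P(A)$ sits in $qA^{**}q$ (from $P = qP(\cdot)q$, which follows from Lemma \ref{ishe3} applied with $e$ the identity and $T=P$, or rather its bidual version $Q=P^{**}$, $T(e)=q$). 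Concretely: apply Lemma \ref{ishe3} to $Q = P^{**}: A^{**}\to A^{**}$ with the projection $1$ — no — apply it with the projection $q = Q(1)$... here one wants the statement that $P^{**}(a) = q P^{**}(a) q$, which is Lemma 2.8 of \cite{BNjp} (cited inside the proof of Lemma \ref{ishe3}). This gives $P(q^\perp A q^\perp + q^\perp Aq + qAq^\perp) = 0$, i.e.\ $v = -I$ there, and $v(D)\subseteq D$.

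**The corner map and conclusion.** On $D = qAq$, $v|_D$ is a period-$2$ surjective isometry with $v|_D^{**}(q) = q$ (since $v^{**}(1)=1-2q$ and $1$ restricts to $q$ as the identity of $D^{**}$), i.e.\ $v|_D$ is a \emph{unital} period-$2$ surjective isometry of the approximately unital Jordan operator algebra $D$. By Theorem \ref{AS} (Arazy--Solel), such a map is a Jordan homomorphism; call it $-\pi$ where I instead set $\pi = -v|_D$? No: set $\pi = v|_D$; it is a period-$2$ surjective isometric Jordan homomorphism of $D$. Then on $D$, $P = \frac12(I - v) = \frac12(I + (-\pi))$; to match the stated form $P = \frac12(I+\pi)$ one simply renames, taking the Jordan automorphism in the statement to be $-v|_D$ — but $-v|_D$ need not be a homomorphism. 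The resolution, matching the $JC^*$ template of Theorem \ref{goq}, is that the relevant automorphism is $v|_D$ itself when $P(A)$ is the $+1$-eigenspace: $P(A) = \{x\in D: v x = x\} = \{x : \pi(x) = x\}$ is a Jordan subalgebra of $D$ (fixed-point set of a Jordan homomorphism), approximately unital by Corollary \ref{Poisson}-type reasoning or directly since it contains a cai built from $q$. Wait — with $\pi = v|_D$ and $P=\frac12(I-\pi)$, $P(A)$ is the $(-1)$-eigenspace; so instead I take $\theta = -\pi$ at the level where it matters, or observe the paper's $\pi$ is my $-v|_D$ conjugated: set $\pi := v|_D$ composed with $-1$ inside... the cleanest fix is: the stated $\pi$ equals $2P - I$ restricted to $D$, which is $-v|_D$, and one checks directly $2P-I$ is multiplicative on $D$ for the Jordan product because $P$ is a Jordan conditional expectation onto its range (which must itself be established — see below). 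Finally, that $P(A)$ is a Jordan subalgebra and $P$ a Jordan conditional expectation follows from the structure $P = \frac12(I+\pi)$ with $\pi$ a period-$2$ Jordan automorphism of $D$, exactly as in the classical argument (Theorem \ref{goq}): for $x,y$ fixed by $\pi$, $x\circ y$ is fixed, so $P(A)$ is a subalgebra, and $P(a\circ P(b)) = \frac12(a\circ P(b) + \pi(a)\circ P(b)) = P(a)\circ P(b)$ using $\pi(P(b)) = P(b)$ and that $\pi$ is a Jordan homomorphism. \textbf{The main obstacle} is the step asserting $q = P^{**}(1) \in JM(A)$ and that $v|_D$ is genuinely unital on $D$; this is where the quasimultiplier theory, the Akemann--Pedersen theorem, and the link between quasimultipliers of $A$ and of a generated $C^*$-algebra (all advertised before Theorem \ref{bs1}) do the real work, and everything downstream is the comparatively routine $JC^*$-style eigenspace computation.
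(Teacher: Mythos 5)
Your overall strategy is the paper's: $v = I - 2P$ is a period-$2$ surjective isometry, one feeds it to the Banach--Stone Theorem \ref{bs1}, and the corner/eigenspace analysis is then routine. But there is a genuine error at the pivotal step where you identify the Jordan homomorphism $\pi$, and it is exactly the point where your write-up visibly flounders. You assert $v|_D^{**}(q)=q$ (``since $v^{**}(1)=1-2q$ and $1$ restricts to $q$ as the identity of $D^{**}$''), and on that basis declare $v|_D$ to be a unital surjective isometry, hence a Jordan homomorphism by Arazy--Solel. This computation is wrong: since $P^{**}$ is idempotent, $P^{**}(q)=P^{**}(P^{**}(1))=q$, so $v^{**}(q)=q-2P^{**}(q)=-q$. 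Thus $v|_D$ is \emph{anti}-unital, and it is $-v|_D=(2P-I)|_D$ that is the unital period-$2$ surjective isometry of $D$; Arazy--Solel applies to \emph{that} map, giving precisely the $\pi$ of the statement, with $P=\frac{1}{2}(I+\pi)$ on $D$ and $P(A)$ the fixed-point set of $\pi$ --- no renaming and no sign gymnastics needed. Your attempted patches do not work as written: ``set $\pi=v|_D$'' leaves you with the wrong eigenspace and with a map $-v|_D$ that you yourself concede ``need not be a homomorphism''; and your ``cleanest fix'' --- to verify directly that $2P-I$ is multiplicative on $D$ ``because $P$ is a Jordan conditional expectation onto its range (which must itself be established)'' --- is circular, since the conditional expectation property is supposed to be the \emph{output} of the structure $P=\frac{1}{2}(I+\pi)$.

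Once the sign is corrected the rest of your outline is sound and matches the intended argument: $q=P^{**}(1)$ is a projection in $JM(A)$ (this is where Theorem \ref{bs1} and the quasimultiplier machinery genuinely carry the weight, as you correctly flag), $P^{**}(a)=qP^{**}(a)q$ kills the other three corners, $P(A)=\{x\in D:\pi(x)=x\}$ is a Jordan subalgebra as the fixed-point set of a period-$2$ Jordan automorphism, and the conditional expectation identity $P(a\circ P(b))=P(a)\circ P(b)$ follows from $\pi(P(b))=P(b)$ together with multiplicativity of $\pi$, exactly as in the $JC^*$-template of Theorem \ref{goq}.
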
  

The converse is true too, such an expression $P =  \frac{1}{2} (I + \pi)$ is a symmetric real positive projection.

This is very close to the  `classical' selfadjoint  characterization in 
Theorem \ref{goq} above.  The main difference is $q$ need not be central, and $P$  symmetric 
is not equivalent to $P$ bicontractive.

The form of the {\em bicontractive projection problem} that evolved in \cite{BNp,BNjp}
asks for  conditions on a bicontractive real positive projection $P : A \to A$ so that
$P(A)$ is a  subalgebra of $A$?   This is not always true, as \cite[Corollary 4.8]{BNp} shows.

 In \cite[Section 4]{BNp} and the start of \cite[Section 6]{BNjp}  we gave a three step reduction that reduces the  bicontractive projection problem to the case of a bicontractive  projection $P : A \to A$ with $P(1) = 1$ and $A$ is generated
by $P(A)$.   We will omit the details here, although we note that some of  the ingredients in this
reduction are taking the bidual, and then observing that for a  bicontractive real positive projection
on a unital operator algebra $A$, $P(1)$ is a projection.   Also we use some facts about Jordan hereditary subalgebras.  

Part of the following result is \cite[Theorem 6.3]{BNjp}.

\begin{theorem} \label{bicr} 
Let $A$ be a unital (Jordan)  operator algebra, and let $D$ be the elements in ${\rm Ker}( P)$ that are also in the 
closed Jordan subalgebra generated by $P(A)$. 
 If $P:A \rightarrow A$ is a bicontractive  unital projection on $A$,   and if $D$  is
densely spanned by the real positive elements which it contains, or if $P(A) \subset \Delta(A)$, then $P(A)$ is a Jordan  subalgebra of $A$.  
\end{theorem}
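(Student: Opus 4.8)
The plan is to follow the three-step reduction mentioned just before the statement, so that it suffices to treat the case where $A$ is unital, $P(1)=1$, and $A$ is generated (as a Jordan operator algebra) by $P(A)$. In that situation set $v = I - 2P$; since $P$ is bicontractive on a unital algebra, a standard averaging/interpolation argument (or the elementary observation that $\|P\| \le 1$ and $\|I-P\|\le 1$ force $\|v\|\le 1$, while $v^2 = I - 4P + 4P^2 = I$) shows $v$ is a surjective isometry of $A$ of period $2$ fixing $1$. By the Arazy--Solel theorem (Theorem \ref{AS}) $v$ is a unital Jordan automorphism of $A$, and $P = \frac12(I+v)$ then identifies $P(A)$ with the fixed-point set $A^v$ of a period-$2$ Jordan automorphism. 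The remaining point is to show that this fixed-point set is a Jordan subalgebra, equivalently that $P(a)\circ P(a) \in P(A)$ for $a \in A$; this is immediate once $v$ is an algebra-type map, since $v(P(a)\circ P(a)) = v(P(a))\circ v(P(a)) = P(a)\circ P(a)$.

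The catch is that the reduction to "$P(1)=1$ and $A$ generated by $P(A)$" is exactly where the two hypotheses of the theorem get used, and this is where the real work lies. Passing to the bidual and restricting to the Jordan hereditary subalgebra cut out by $q = P^{**}(1)$ (a projection, by the facts recalled before Theorem \ref{bicr}) is routine; but after this reduction one must know that the ambient algebra can be taken to be the closed Jordan subalgebra generated by $P(A)$ and that $\mathrm{Ker}(P)$ restricted there behaves well. The hypothesis that $D$ — the part of $\mathrm{Ker}(P)$ inside $\mathrm{joa}(P(A))$ — is densely spanned by its real positive elements is precisely what lets us invoke the corollary after Lemma "Suppose that $A$ is an approximately unital operator algebra\dots" (the kernel-is-an-ideal corollary), giving that $\mathrm{Ker}(P) \cap \mathrm{joa}(P(A))$ is an approximately unital Jordan ideal; combined with $P$ restricted to $\mathrm{joa}(P(A))$ being a real positive contractive projection with range $P(A)$, Theorem \ref{symm}-type reasoning (or directly the Arazy--Solel argument above applied inside $\mathrm{joa}(P(A))$) shows $P(A)$ is a Jordan subalgebra. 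So really the theorem is: reduce, then the $v = I-2P$ trick works because we have arranged for everything to happen inside a single Jordan operator algebra generated by the range.

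For the alternative hypothesis $P(A) \subset \Delta(A)$, I would bypass the $v$-trick entirely and invoke Corollary \ref{ipscor}: a real positive contractive projection with range in $\Delta(A)$ has $P(A)$ a $JC^*$-algebra in the $P$-product, and the restriction of $P$ to $\mathrm{joa}(P(A))$ is a Jordan $*$-homomorphism onto it. One then applies the selfadjoint bicontractive projection theorem (Theorem \ref{goq}) to $P$ restricted to the $JC^*$-algebra $\Delta(A)$ — a positive bicontractive projection there — to get that $P(\Delta(A)) = \Delta(A)\cap P(A)$ is a $JC^*$-subalgebra of $\Delta(A)$; but $P(A) \subset \Delta(A)$ forces $P(A) = \Delta(A)\cap P(A) = P(\Delta(A))$, so $P(A)$ is itself a (Jordan) subalgebra of $A$. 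The main obstacle, as flagged above, is organizing the three-step reduction cleanly enough that the hypothesis "$D$ densely spanned by its real positive elements" is exactly what survives the reduction and feeds the kernel-ideal corollary; I would carry that out by first passing to $A^{**}$ and $P^{**}$, then replacing $A$ by the weak\textsuperscript{*}-closed Jordan subalgebra generated by $P^{**}(A^{**})$ inside the corner $qA^{**}q$, checking at each stage that bicontractivity, real positivity, and the spanning hypothesis on the relevant kernel are preserved, and finally running the $v = I-2P$ plus Theorem \ref{AS} argument in the reduced setting.
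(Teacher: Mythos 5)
Your treatment of the second hypothesis is essentially the paper's own proof: restrict $P$ to the $JC^*$-algebra $\Delta(A)$, observe that it is a positive bicontractive projection onto $P(\Delta(A)) = P(A)$, and apply Theorem \ref{goq} to conclude that $P(A)$ is a $JC^*$-subalgebra of $\Delta(A)$, hence a Jordan subalgebra of $A$. (The detour through Corollary \ref{ipscor} is unnecessary but harmless.) That part is fine.

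The first case, however, contains a genuine error. Your ``elementary observation'' that $\| P \| \leq 1$ and $\| I - P \| \leq 1$ force $\| I - 2P \| \leq 1$ is false for nonselfadjoint (Jordan) operator algebras: the triangle inequality only gives $\| I - 2P \| \leq 2$, and there is no averaging or interpolation argument that improves this in general. Indeed the paper states explicitly, in the discussion following Theorem \ref{symm}, that ``$P$ symmetric is not equivalent to $P$ bicontractive'' outside the selfadjoint setting; bicontractive $=$ symmetric is a special feature of $JC^*$-algebras recorded in Theorem \ref{goq}. If your claim were correct, every unital bicontractive projection would be symmetric, hence (being real positive by Proposition \ref{onladdn}) Theorem \ref{symm} would force its range to be a Jordan subalgebra with no further hypothesis --- but \cite[Corollary 4.8]{BNp} gives a bicontractive real positive projection whose range is \emph{not} a subalgebra, so the $v = I - 2P$ route cannot work. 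This is precisely why the bicontractive projection problem is harder than the symmetric one and why the extra hypothesis on $D$ is needed. The remainder of your sketch for this case (reduce, invoke the kernel-ideal corollary, then ``Theorem \ref{symm}-type reasoning'') does not close the gap: knowing that ${\rm Ker}(P)$ meets the Jordan algebra generated by $P(A)$ in a Jordan ideal does not by itself yield $a \circ b \in P(A)$ for $a,b \in P(A)$, and the actual argument (which the paper does not reproduce, deferring instead to \cite[Theorem 6.3]{BNjp}) is more delicate than an appeal to Arazy--Solel.
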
  

\begin{proof}
We will just prove the result with hypothesis
$P(A) \subset \Delta(A)$ here, for the other see \cite[Theorem 6.3]{BNjp}.   If $P(A) \subset \Delta(A)$ then as in the proof of 
Corollary \ref{ipscor},  the restriction of $P$ to the $JC^*$-algebra $\Delta(A)$ is a real positive, hence positive, bicontractive projection
onto $P( \Delta(A)) = P(A) \subset \Delta(A) = P(A)$.   By Theorem \ref{goq}, $P(A) $ is a 
Jordan  subalgebra of $\Delta(A)$, hence of $A$.  \end{proof}

   (The hypotheses in the last theorem are conditions that are always satisfied for
positive projections on $C^*$-algebras, so they are not unnatural.)

Finally, we remark that much of  Section \ref{cp}  was concerned with which results from  \cite{BNp} 
still have variants valid for {\em contractive real positive} projections on Jordan operator algebras.   If one uses 
 {\em completely contractive completely  real positive} projections then essentially everything in  \cite{BNp}
is valid for Jordan operator algebras, as is observed in \cite{BNjp}.

 \section{What are conditional expectations?} \label{wace} 
 
 This section may be viewed as an appendix for nonexperts, giving some basic insights into the conditional expectation property in 
 Section \ref{cp}, and into some of the advanced considerations needed for the discussion in Section \ref{BLvv}.  
 Conditional expectations are no doubt also addressed in other articles in this conference proceedings, probably also 
 with some discussion of their history.
 Thus there may be a very small amount of overlap here at the beginning of the present section.   We believe that these other articles are located 
 in the Banach lattice setting and are concerned with applications there.
 We will not focus on the lattice aspect at all, and our techniques are quite different. 
  
 The classical or `probabilistic' conditional expectation may be taken to refer to 
 a hugely important construction on a probability 
 measure space $(K,\A,\mu)$ induced by a choice of a sub-$\sigma$-algebra $\B$ of the $\sigma$-algebra $\A$ on the set $K$. 
 In this case one may identify $L^p(K,\B,\mu)$ isometrically with a subspace of $L^p(K,\A,\mu)$.   Moreover dualizing these embeddings
 gives contractive positive unital projections $E_{\B}$ of $L^p(K,\A,\mu)$ onto the copy of $L^p(K,\B,\mu)$ (to get the expectation onto $L^1$ one takes
 the dual in the weak* topology of the $L^\infty$ inclusions).  
 These satisfy a list of beautiful 
 properties often attributed to Kolmogorov (a great account of this list may be found e.g.\ in the Wikipedia article on 
 conditional expectations).    Writing $E_{\B}$ as $E$, the most notable of these is
 that $$\int \, E (f)  \, d \mu = \int \, f   \, d \mu$$ (which in the quantum variant becomes the `trace preserving' property).
 However this list also includes `positivity': $E(f) \geq 0$ if $f \geq 0$, `contractivity' ($|Ef | \leq E(|f|)$ which implies
 $\| E \| \leq 1$), the projection property $E \circ E = E$, and that 
  $E(1) = 1$.
 It  also includes the important `module' property $$E(fg) = E(f) g, \qquad g \in L^\infty(K,\B,\mu),$$
 which defined what we called `conditional expectation' in Section \ref{cp}.  
 It also has important continuity properties, like being weak* continuous on $L^\infty$ (the latter is clear 
 because as we said it is the dual 
 of a map on $L^1$).
 Moreover $E_{\B}$ has a fundamental probabilistic interpretation.  We will not rehearse this here since it is so well
 known.   Indeed 
this   interpretation is ubiquitous in scientific disciplines. It has been said 
 that  conditional expectations are the starting point of modern probability theory.    
 
 Most of this
 is still true if we start to mildly relax the condition that $\mu$ is a finite measure (although now $1 \notin L^1$).    Historically, mathematical
 analysts tried to successively weaken the measure theoretic 
 requirements on $\mu$ while still preserving a satisfactory theory of expectation.  
  At some point beyond so-called `localizable measures' (we warn the reader that there is  ambiguity 
 in the measure theory literature concerning this term) things break down and become pathological.
  It is interesting that when one tries to identify this point of breakdown, it seems  to corroborate 
 how perfectly von Neumann algebras capture the essence of these concepts.  Namely, the breakdown occurs very slightly beyond 
 the class of measures for which $L^\infty(K,\mu)$ is a von Neumann algebra.   And in the latter case the rich 
 theory of von Neumann algebraic conditional 
 expectations applies.

  Notice that $L^\infty(K,\B,\mu)$ is a von Neumann subalgebra of $L^\infty(K,\A,\mu)$.  Indeed this may be viewed as the 
  weak* closure of the  $\B$-simple functions; or equivalently, the weak* closed subalgebra of $L^\infty(K,\A,\mu)$ generated by the 
  (characteristic functions of the) sets in $\B$.    This is an easy exercise.
  
 Conversely, any von Neumann subalgebra $D$ of $L^\infty(K,\A,\mu)$ is of this form.  Since we are not aware of a source in the literature
 we sketch a simple argument that $D = L^\infty(K,\B,\mu)$.     Let $\B = \{ B \in \A : \chi_B \in D \}$.
It is an exercise to check that $\B$ is a $\sigma$-algebra.   For example it is an algebra because
$\chi_{B_1} \, \chi_{B_2} = \chi_{B_1 \cap B_2}$.   Then if $B_1, B_2, \cdots$ are disjoint sets in
$\B$, let $(p_n)$ be the corresponding mutually orthogonal projections in $D$, let $F = \cup_n \, B_n$, and let $p = \sup_n \, p_n \in D$.
Then there exists a set $B \in \A$ with $p = \chi_B$ $\mu$-a.e..   Since $p_n p = p_n$ the set $B_n \setminus (B \cap B_n)$ is
$\mu$-null.   Thus we may assume that $B_n \subset B$  for all $n$, so that $F \subset B$.  On the other hand if $E \cap B_n = \emptyset$ for all
$n$, then $\chi_E \, p_n = 0$ for all $n$, so that $\chi_E \, p = \chi_{E \cap B} =  0$ in $M$.  We
conclude that $\chi_F = p \in D$.  Thus $F \in \B$, so that  $\B$ is a $\sigma$-algebra.
If $B \in \B$ then $\chi_B \in D$ by definition.  Conversely, for any projection $p$ in $D$  there exists a set $B \in \A$ with $p = \chi_B$ $\mu$-a.e.,
so that $B \in \B$ and $\chi_B \in  L^\infty(K,\B,\mu)$.
Using the fact that $D$ and $L^\infty(K,\B,\mu)$ are both generated by their projections, we conclude that $D = L^\infty(K,\B,\mu)$.

In the discussion below we assume that $\mu$ is a probability measure for simplicity.
It follows that for any von Neumann subalgebra $D$ of $L^\infty(K,\A,\mu)$, there 
exists a canonical sub-$\sigma$-algebra $\B$ of $\A$ and a canonical contractive projection $E_{\B}$ from $L^\infty(K,\A,\mu)$ onto $D$.
We call this the 
{\em probabilistic conditional expectation}.   It is, as we said,  `trace preserving': $\int \, E_{\B}(f)  \, d \mu = \int \, f   \, d \mu$ for $f \in L^\infty(K,\A,\mu)$.

Some of the founders of modern probability theory and their students tried to characterize conditional expectations amongst the 
idempotent maps (i.e.\ projections) on $L^p(K,\A,\mu)$ (particularly in the case $p = 1$).    
See e.g.\ \cite{DR,Lacey} and references therein.  There were  early characterizations due
to Moy \cite{Moy} who characterizes conditional expectations in terms of  operators
on the positive measurable functions, and on $L^p$, obtaining 
particularly nice results for $L^1$.   Later Douglas  \cite{Doug}, Ando, Lacey  and Bernau (see e.g.\ \cite{Lacey}), and others refined these results.  
It follows from this work that there are bijective correspondences between  weak* continuous unital contractive projections  $P$ 
from $L^\infty(K,\A,\mu)$ onto a von Neumann subalgebra  $D$, 
and  density functions $h \in L^1(K,\A,\mu)_+$ with $E_{\B}(h) = 1$ (such $h$ is called a `weight function' for $p$).
The correspondence is given by $P(x) = E_{\B}(hx)$, and $h$ is the density of the normal state $x \mapsto \int \, P(x)  \, d \mu$ (which
may be written as $P_*(1)$, viewing $1 \in L^1$).   If $\int \, P(f)  \, d \mu= \int f  \, d \mu$ for $f \in L^\infty$ then 
one sees that $\int \, h f  = \int f$ for such $f$, which forces $h = 1$ and $P = E_{\B}$.  

Summarizing the above discussion: we have seen at least in the setting above, the weak* continuous unital contractive projections $P$
from $L^\infty$ onto a  von Neumann subalgebra $D$ 
are simply  the `weightings' of the  probabilistic conditional expectation $E_{\B}$, by the weights $h$ above.     That these projections have the  important `module' 
property $P(fg) = P(f) g$ for $g \in D$ (which defined what we called `conditional expectation' in Section \ref{cp}), may be viewed in this picture $P(x) = E_{\B}(hx)$ as coming immediately
 from the fact that 
 the  probabilistic conditional expectation $E_{\B}$ has this property.     The probabilistic conditional expectation
is characterized among all the weak* continuous unital contractive projections  $P$ onto $D$ by the 
`trace-preserving' condition $\int \, P(f) = \int f$.   

So all weak* continuous unital contractive projections $P$
from $L^\infty$ onto a  von Neumann subalgebra `are' weighted probabilistic conditional expectations.
In fact if $P$ is faithful then 
 it is a probabilistic conditional expectation, the probabilistic conditional expectation associated with a measure which we noew describe.  
 Indeed one may play the above game  in reverse.  Suppose that 
we are given a  weak* continuous unital contractive projection $P$ from  $L^\infty(K,\A,\mu)$ onto $D$.
Then $f \mapsto  \int \, P(f)  \, d \mu$ 
is a normal state 
of $L^\infty(K,\A,\mu)$, which is simply integration against  a probability measure $d \nu = h \, d \mu$ on $(K,\A)$.
If $P$ is faithful then $h$ has full support and $L^\infty(K,\A,\mu) = L^\infty(K,\A,\nu)$.  
The measure space $(K, \A,\nu)$  can 
replace the role of $(K,\A,\mu)$ in the above discussion, to produce a
probabilistic conditional expectation associated with $\nu$.   If there are no issues with the 
`support' (that is, if $h$ has full support), then one can see that this probabilistic conditional expectation is  $P$.

\begin{remark}  One can somewhat generalize the discussion beyond the case that $D$ is a subalgebra, to try to link up with the generalized `conditional expectations' discussed 
in Section \ref{cp} with respect to the Choi-Effros product.   For example,
 Douglas showed that the range of a contractive projection $P$ on $L^1$  is isometric to an $L^1$ space  \cite{Doug}, and investigated
 the relation of such $P$ to the probabilistic conditional expectation.   
Let us give a quick proof of this range assertion.   First assume that  $\int \, P(f)  = \int f$ for $f \in L^1$.
(The latter is a much weaker condition than the condition $\int \, P^*(f) = \int f$  or $f \in L^\infty$ considered above, indeed  $\int \, P(f) = \int f$ for $f \in L^1$ simply says
that $P^*$ is unital on $L^\infty$, which is satisfied by all the projections in the earlier discussion.) 
Indeed note that in this case $P^*$ is a contractive unital projection, and 
$$({\rm Ran}(P))^* \cong A^*/{\rm Ran}(P)^{\perp} = A^*/{\rm Ker}(P^*) \cong {\rm Ran}(P^*).$$
By the Choi and Effros result mentioned in the introduction, the range of $P^*$ on $L^\infty$ is a (commutative) $C^*$-algebra in the $P^*$-product.
Since it is weak* closed it is a von Neumann algebra (by a result like
Theorem \ref{Sak} if necessary).   By the uniqueness of von Neumann algebra
preduals, ${\rm Ran}(P)$ is an $L^1$ space.   There is a similar proof in the general case: by a result of Youngson
\cite[Theorem 4.4.9]{BLM} which generalizes the Choi-Effros result used above, $P^*(xy^*z)$ is a (commutative) TRO product on  ${\rm Ran}(P^*)$.     As e.g.\ in the proof of \cite[Theorem 4.4.9]{BLM}, 
any extreme point of the ball 
in this TRO is `unitary', and  any TRO with a unitary is isometric to a $C^*$-algebra.   So again as above it is a von Neumann algebra and 
${\rm Ran}(P)$ is an $L^1$ space.   See \cite{NgOz} (and the Kirchberg result discussed and cited there) for a noncommutative generalization. 
\end{remark} 

Let us now consider relaxing the condition that $\mu$ is a probability measure in the discussions above.   We said earlier that there are serious pathologies 
for the most general kinds of measures, and suggested to simply consider measures with 
$L^\infty(K,\A,\mu)$  a von Neumann algebra.   Going one step further, replace $L^\infty(K,\A,\mu)$  by 
a possibly noncommutative von Neumann algebra $M$.   The measure $\mu$, and associated integral, will be replaced by a certain `trace' or `weight' $\nu$.
Now we are in the setting of von Neumann algebraic conditional expectations, which gets into the deep taxonomy of von Neumann algebras,
and the difficult theory of noncommutative integration \cite{Tak2,LabG}.  
The case that we have dealt with above corresponds to the class of so-called `finite' von Neumann algebras, where there exists a faithful normal tracial 
state $\tau$ on $M$.   In this case it is a theorem that there exists a weak* continuous unital contractive projection $E$
from $M$ onto any  von Neumann subalgebra $D$ of $M$, and moreover there is a  unique such $E$ that is 
trace preserving (i.e.\ $\tau \circ E = \tau$).   We may write this $E$ as $E_\tau$, and call this
the `probabilistic' (we should 
perhaps say `tracial' here) conditional expectation.    The other weak* continuous unital contractive projections $E$
from $M$ onto  $D$ are again the `weightings' $E_\tau(hx)$ for densities $h \in L^1(M)_+$ which commute with 
$D$ and which satisfy $E_\tau(h) = 1$.  
The reader should note the parallel with $E_{\B}$ above,
the probabilistic conditional expectation.   Again we see that all weak* continuous unital contractive projections
from $M$ onto a  von Neumann subalgebra `are' weighted `probabilistic'  conditional expectations.
They may be viewed as a `partial integral with respect to a noncommutative measure'.

A brief noncommutative history of conditional expectations: von Neumann, Dixmier, Nakamura and
Turumaru, Umegaki, and others considered  conditional expectations in the framework of von Neumann (or
$C^*$-) algebras and established many properties of these objects (in particular the `Kolmogorov list' above, especially in
the context of von Neumann algebras with a finite trace (see e.g.\ \cite{AC} for references).  Some of these works were aiming to
generalize the Moy-Doob characterization mentioned above.   Tomiyama added the modern perspective of 
 conditional expectations in terms of norm one projections in $C^*$-algebras,  his theorem stated in our introduction 
 shows that all positive idempotents onto a $C^*$-algebra have the module property 
 that leads to them being called conditional expectations.   They are also completely positive and completely contractive as we said.
 See p.\ 132--133 in \cite{Blbook} for more on this and some 
other basic  facts about conditional expectations. 
 Others have 
 generalized some of the  work  of Moy-Douglas-Ando-Lacey and Bernau, etc., that we described above, to 
positive contractive projections on noncommutative $L^1$ or $L^\infty$ (i.e.\ on a von Neumann algebra $M$).
Conditional expectations play a profound role in the classification
 of von Neumann algebras,     e.g.\ in the structure theory of factors, or the fundamental work of Connes in which approximately finite von Neumann algebras are the 
amenable ones, and are the 
ones that are the range of a (not necessarily weak* continuous) conditional expectation  on $B(H)$.   Haagerup 
transferred conditional expectations to the powerful framework of operator valued weights and the extended positive part of a von Neumann algebra.
The latter is the noncommutative version of $(L_0)_+$, the positive measurable functions, and consists of suprema of increasing sequences of 
elements of $M_+$. 
This gives the most general perspective, allows treatment of general noncommutative $L^p$ spaces,
etc.   Conditional expectations are now a major and ubiquitous tool in the theory of
$C^*$- and von Neumann algebras, and there are by now a huge number of important examples 
(see e.g.\ \cite{Tak2,KadCE}  for more references).  

Nonetheless, outside of the class of von Neumann algebras with a faithful normal tracial 
state,  the existence of a weak* continuous conditional expectation onto a von Neumann subalgebra 
$D$ is a difficult question (unless $D$ is atomic).    Indeed this question gets to the heart of, and uses the whole industry of the theory of 
noncommutative integration (due to Connes, Haagerup, Pedersen, Takesaki, and very many other brilliant operator algebraists).   
See \cite{Tak2,LabG} for a taste of the latter.  For a commutative von Neumann algebra $L^\infty(K,\A,\mu)$ again, but with $\mu$ not $\sigma$-finite
 one must use the theory of semifinite measures to construct a conditional expectation onto a von Neumann subalgebra.
Now suppose that $M$ is a  noncommutative {\em semifinite von Neumann algebra}, for example $B(l^2)$.   Then $M$ has a faithful normal semifinite trace $\tau$.
However there need not exist any conditional expectation onto a fixed von Neumann subalgebra 
(e.g.\ it is known that there is no conditional expectation from  $B(l^2)$  onto nonatomic von Neumann subalgebras).
Indeed there exist a  $\tau$-preserving conditional expectation onto a  von Neumann subalgebra $D$ if and only if 
$\tau$ restricts to a semifinite trace on $D$.  The one direction of this is \cite[Proposition V.2.36]{Tak}.
For the other, if $0 \neq x \in D_+$ and $0 \neq y \in M_+$ with $\tau(y) < \infty$ and $y \leq x$,
then  $\tau(E(y)) = \tau(y) \in (0,\infty)$ and $E(y) \leq x$.  Clearly $0 \neq E(y) \in D_+$.    We have verified that 
the restriction of $\tau$ to $D$ is semifinite. 

For non-semifinite von Neumann algebras the situation is much more complicated, and gets into Haagerup's theory of 
operator valued weights (see \cite{Tak2,LabG} for references).    The conditions for existence of a weak* continuous conditional expectation onto a von Neumann subalgebra
are much more intricate, such conditions involving the operator semigroup central to Tomita-Takesaki modular theory.
See e.g.\ \cite[Theorem 4.2]{Tak2}.     For technical reasons and to avoid pathologies
one usually insists that  $M$ possesses a faithful normal state $\nu$
(which is equivalent to $M$ possessing a faithful state).   This class of von Neumann algebras 
includes those on a separable Hilbert space, or with separable predual.
 Then there exists a  $\nu$-preserving conditional expectation $E$ onto a  von Neumann subalgebra
 $D$ if and only if $D$ is invariant under the modular automorphism group $(\sigma_t^\nu)$ of $\nu$ (see \cite[Theorem IX.4.2]{Tak}).  
 Such  a  $\nu$-preserving conditional expectation is again unique.     We may write this $E$ as $E_\nu$, and again call this
the `probabilistic' conditional expectation (it depends on the fixed state $\nu$, which can be thought of as a noncommutative 
probability integral).   
We will not go into further detail here.

In summary we have seen that under certain conditions on a von Neumann algebra $M$ and a von Neumann subalgebra 
$D$, and on a positive functional or weight $\nu$ on $M$, conditions usually involving 
modular theory,  there exists a unique $\nu$-preserving 
weak* continuous unital contractive projection
from $M$ onto $D$.   We call this a conditional expectation, and it is analogous to $E_{\B}$ above.

\section{Noncommutative characters on  noncommutative function algebras (operator algebras)}  \label{BLvv} 

The last part  of our Positivity X lecture was concerned with ongoing joint work with L. E. Labuschagne \cite{BLv,BLv2}, 
 on a special case 
of the real positive projections considered  in the last Section   \ref{cp}.
This case we consider to be a good noncommutative
generalization of the classical theory of `characters' (i.e.\ homomorphisms into 
the scalars) of a function algebra (see e.g.\ \cite{Gam}).  Recall that if $A \subset C(K)$ is a function algebra or uniform algebra on compact set $K$,
then the fundamental associated object is the set $M_A$ of (scalar valued) characters on $A$.

A noncommutative function algebra for us is just an  operator algebra in the earlier sense,
a subalgebra $A$  of a $C^*$-algebra $C$.  We assume $C$ unital and $1_C \in A$ for simplicity here.
In the nonunital case we can unitize by the tricks in the early parts of Sections 2 and 3 above.
In this setting  scalar valued characters are usually not so useful, however 
we have found that in the following setting one can generalize many of the classical function algebra
character results.   Namely, consider an  inclusion $D \subset A \subset C$, where $A, C$ are as before, 
and $D$ is a  $C^*$-subalgebra of $A$. 
A   $D$-{\em character} is  a unital contractive homomorphism $\Phi : A \to D$ which is also a $D$-bimodule map (or equivalently,
is the identity map on $D$).
The classical scalar valued characters $\chi$ on $A$ fit into this setting by identifying $\chi$ with $\chi(\cdot) 1_A$.
We were motivated to study these because of their importance in (the definition of) 
{\em Arveson's subdiagonal algebras}   \cite{Arv2,BLsurv}.  Arveson also gives very many good examples of such $D$-characters in that paper.

Note that these fall within the framework of Section \ref{cp}, they are 
in fact automatically real completely positive completely contractive 
projections from $A$ onto a subalgebra.   That they are completely contractive  follows from the 
standard trick mentioned at the end of the proof 
of Theorem \ref{ipscor2}.   That they are real completely positive follows from e.g.\ Proposition \ref{onladdn}. 
Recall also that by e.g.\ Theorem \ref{crpproj}. a
completely contractive unital projection
onto a unital $C^*$-subalgebra $D$ is automatically a $D$-bimodule map.  

In this section we will for simplicity stick to the case of contractive unital characters.   In the nonunital case 
one would consider (completely) contractive real positive homomorphisms from 
the operator algebra $A$ onto a $C^*$-subalgebra.
As in Section \ref{cp} these extend uniquely to (completely) contractive unital homomorphisms on $A^1$, and so we are back in 
the unital character case.     Thus we may suppress discussion of real positivity in the next paragraphs: it is there but automatic.

In \cite{BLv,BLv2}, Labuschagne and the author consider  several problems that arise when generalizing classical function algebra results 
involving characters.     For the sake of the present article not becoming too scattered in theme we just mention briefly a couple of 
examples of these that use specific theorems from our earlier sections above.   
The first is a new noncommutative  take on the classical theory of Gleason parts of function algebras.
The Gleason relation ($\| \varphi - \psi \| < 2$) on characters of function algebras does not seem to have a $B(H)$ valued analogue suitable for our 
  purposes, but we show that interestingly  it does have a noncommutative variant for our $D$-characters. 
    
  We also use some concepts considered by Harris in e.g.\ \cite{Harris0,Harris}: 
$$T_{x}(y) = (1-xx^*)^{-\frac{1}{2}} (x+y) (1+x^*y)^{-1} 
(1-x^* x)^{\frac{1}{2}}.$$    
This makes sense for elements in the  open unit ball in $B(H)$.  For  fixed such $x$  the maps $T_x$ are essentially exactly the 
biholomorphic self maps of the  open unit ball in $B(H)$, or are 
{\em M\"obius maps} of
this open ball. 
The {\em hyperbolic distance} $\rho(x,y)$  is $$\tanh^{-1} \| (1-xx^*)^{-\frac{1}{2}} (x-y) (1-x^*y)^{-1} 
(1-x^* x)^{\frac{1}{2}} \| =  \tanh^{-1} \| T_{-x}(y) \|.$$
Harris shows   \cite{HarHol} that 
$\rho$ is what is known as a {\em CRF pseudometric}
on the open unit ball ${\mathcal U}_0$ 
and it satisfies the  {\em  Schwarz-Pick inequality} 
$$\rho(h(x),h(y)) \leq \rho(x,y), \qquad x, y \in {\mathcal U}_0,$$
for any holomorphic $h : {\mathcal U}_0 \to {\mathcal U}_0$.   We have equality here  if $h$ is biholomorphic. 

We may also define an equivalence relation using  the {\em real positive ordering}.   If $\Phi, \Psi$ are maps from $A$ into a $C^*$-algebra
  we write $\Phi \preceq \Psi$ if $\Psi - \Phi$ is a real positive map (in the sense of e.g.\ Section 3).    Note that 
  one may then show that in this situation it is real completely positive, and then apply Theorem \ref{stine2}.   This 
  permits us to define an equivalence relation on $D$-characters by the existence of strictly positive constants
$c, d$ with $\Phi \preccurlyeq 
c \Psi$ and
$\Psi \preccurlyeq d \Phi$.   The reasoning in the last few lines ties this equivalence relation with the famous notion of Harnack 
equivalence (see e.g.\ \cite{SV}, and we thank Sanne ter Horst for this and many other references).

Using these ideas, and following classical methods, and results like Theorem \ref{stine2} 
above, one may prove:

\begin{theorem}  Consider inclusions $D \subset A \subset C$ as above.  Suppose that $D$ is represented nondegenerately on a Hilbert space $H$.
 Let   $\Phi , \Psi : A \to D$ be  $D$-characters.   
 The following 
are equivalent: \begin{enumerate} \item  [{\rm (1)}]
$\| \Phi - \Psi \| < 2$.
\item  [{\rm (2)}] $\| \Phi_{|{\rm Ker} \, \Psi} \| < 1$.
\item  [{\rm (3)}]  There is a constant $M > 0$ with $\rho(\Phi(a), \Psi(a)) \leq M$ for $\| a \| < 1, a \in A$.
\item   [{\rm (4)}] If $\| \Phi(a_n) \| \to 1$ for a sequence $(a_n)$ in ${\rm Ball}(A)$, then 
 $\| \Psi(a_n) \| \to 1$.  \end{enumerate} 
The above conditions are implied by the equivalent conditions:
 \begin{enumerate} 
 \item [{\rm (5)}]  There are positive constants
$c, d$ with $\Phi \preccurlyeq
c \Psi$ and
$\Psi \preccurlyeq d \Phi$.  
\item [{\rm (6)}]  There are positive constants
$c, d$ and completely positive $B(H)$-valued
maps  $\tilde{\Phi}, \tilde{\Psi}$ extending
$\Phi, \Psi$ to $C$, with $\tilde{\Phi} \leq
c \tilde{\Psi}$ and
$\tilde{\Psi} \leq d \tilde{\Phi}$.  
\end{enumerate}
If $D$ is 1 dimensional then all  the conditions here are equivalent.   
\end{theorem}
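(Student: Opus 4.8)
The plan is to prove the only implication not already in the statement: assuming $D$ is one-dimensional, to deduce condition (5) from the (equivalent) conditions (1)--(4). Since (5)$\Leftrightarrow$(6) and (5)$\Rightarrow$(1)--(4) are granted, this closes the loop and makes all six conditions equivalent. Because $D$ is a unital $C^*$-subalgebra and $1_C=1_A\in D$, one-dimensionality forces $D=\Cdb 1_C$, so a $D$-character is simply a contractive unital character $\varphi:A\to\Cdb$, identified with $\varphi(\cdot)1_C$, and the real positive ordering $\Phi\preccurlyeq c\Psi$ becomes the requirement that the scalar functional $c\psi-\varphi$ be real positive on $A$. First I would record the elementary fact that a bounded functional $g$ on the unital operator algebra $A$ is real positive if and only if $\|g\|=g(1_A)$ (this common value then being $\geq 0$): the forward direction is Proposition \ref{normpo}, and for the converse, if $a\in{\mathfrak F}_A$ then $\mathrm{Re}\,g(a)=g(1_A)-\mathrm{Re}\,g(1_A-a)\geq g(1_A)-\|g\|=0$, so $g$ is real positive on ${\mathfrak F}_A$, hence on all of ${\mathfrak r}_A$ by Proposition \ref{2co}. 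Consequently, since $\|c\psi-\varphi\|\geq(c\psi-\varphi)(1_A)=c-1$ always, we get $\varphi\preccurlyeq c\psi$ (for $c\geq 1$) \emph{precisely} when $\|c\psi-\varphi\|\leq c-1$, i.e.\ when $|c\psi(a)-\varphi(a)|\leq c-1$ for all $a$ in the open unit ball of $A$.

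Next I would feed in condition (3): there is $M>0$ with $\rho(\varphi(a),\psi(a))\leq M$ for every $a$ in the open unit ball, which for scalars is exactly the pseudohyperbolic estimate $\bigl|\tfrac{\varphi(a)-\psi(a)}{1-\overline{\psi(a)}\varphi(a)}\bigr|\leq t_0:=\tanh M<1$. Thus for each such $a$, the point $z:=\varphi(a)$ lies in the closed pseudohyperbolic ball of pseudoradius $t_0$ about $w:=\psi(a)$, which is the Euclidean disc with centre $\tfrac{(1-t_0^2)w}{1-t_0^2|w|^2}$ and radius $\tfrac{(1-|w|^2)t_0}{1-t_0^2|w|^2}$ (the standard image-of-a-disc-under-a-M\"obius-map computation). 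The heart of the argument is then a purely planar claim: with $c:=e^{2M}=\tfrac{1+t_0}{1-t_0}$, this pseudohyperbolic ball is contained in the Euclidean disc $\overline{D(cw,c-1)}$ for every $w$ in the open unit disc. Since containment of one Euclidean disc in another is equivalent to $(\text{distance between centres})+(\text{smaller radius})\leq(\text{larger radius})$, writing $r=|w|$ and clearing the positive denominator $1-t_0^2 r^2$ reduces the claim to a single scalar inequality; a short computation shows both sides carry a factor $1-r$, and after cancelling it the inequality becomes $c(1-t_0 r)\geq 1+t_0$, which holds for all $r\in[0,1]$ by the choice of $c$. Applying the containment with $w=\psi(a)$ yields $|c\psi(a)-\varphi(a)|\leq c-1$, hence $\|c\psi-\varphi\|\leq c-1$, hence $\varphi\preccurlyeq c\psi$ by the first paragraph. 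Since the pseudohyperbolic distance (and so condition (3)) is symmetric in its two arguments, the same reasoning gives $\psi\preccurlyeq c\varphi$. This is exactly (5).

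The hard part will be precisely that Euclidean-disc containment, and more importantly the conceptual point behind it: the crude bound ``$|\varphi(a)-\psi(a)|$ is uniformly $<2$'' coming straight from (1) is by itself \emph{not} enough --- one genuinely needs that the pseudohyperbolic ball about $w$ collapses as $|w|\to 1$, i.e.\ that $|\varphi(a)|$ and $|\psi(a)|$ are forced to be comparably close to the unit circle, which is the extra content of (3) (equivalently of (4)). As a sanity check, the constant $c=e^{2M}$ is the expected one: when $A$ is the disc algebra and $\varphi,\psi$ are evaluation at $0$ and at $r\in(0,1)$, one has $M=\rho(0,r)=\tanh^{-1}r$ and $e^{2M}=\tfrac{1+r}{1-r}$, the classical Harnack constant, in agreement with the Harnack-equivalence discussion preceding the theorem. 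The only routine loose ends I would then tidy up are the reduction itself (unitality forces $1_C=1_A\in D$, so a one-dimensional $D$ really is $\Cdb 1_C$, and on it real positivity of a scalar means ``real part $\geq 0$''), the passage from the open ball to the supremum defining $\|c\psi-\varphi\|$, and the verification of the pseudohyperbolic-disc centre/radius formula.
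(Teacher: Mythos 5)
The paper itself contains no proof of this theorem: it is stated as a result of \cite{BLv}, with only the remark that it follows ``following classical methods'' and results like Theorem \ref{stine2}. So there is no written argument to compare against line by line. The part you do prove --- that for $D=\Cdb 1$ condition (3) implies condition (5) --- is correct and is indeed the classical route: the characterization of real positive functionals by $\|g\|=g(1)$ (your use of Propositions \ref{normpo} and \ref{2co} is exactly right), the reduction of $\varphi\preccurlyeq c\psi$ to the Euclidean estimate $|c\psi(a)-\varphi(a)|\le c-1$ on the open ball, and the containment of the pseudohyperbolic disc of radius $t_0$ about $w$ in $\overline{D(cw,c-1)}$ with the Harnack constant $c=(1+t_0)/(1-t_0)=e^{2M}$. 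I checked the containment: the difference of the two sides of the ``centre distance plus small radius $\le$ large radius'' inequality factors as $(1-r)(1+t_0 r)\bigl(c(1-t_0 r)-(1+t_0)\bigr)$, which is nonnegative precisely under your condition $c(1-t_0 r)\ge 1+t_0$, with equality only at $r=1$; the constant is the sharp classical one.

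The genuine gap is that this is only the final sentence of the theorem. The statement also asserts (i) the equivalence of (1)--(4) for a general $C^*$-subalgebra $D$, (ii) the equivalence of (5) and (6), and (iii) that (5)/(6) imply (1)--(4). Your proposal declares all of these ``granted,'' but they are conclusions of the theorem, not hypotheses, and none is trivial. The equivalences among (1)--(4) in the operator-valued setting need the M\"obius maps $T_x$ and Harris's Schwarz--Pick inequality for the CRF pseudometric quoted just before the theorem (e.g.\ controlling $\|T_{-\Psi(a)}(\Phi(a))\|$ uniformly), not merely the planar computation; (5)$\Leftrightarrow$(6) is exactly where Theorem \ref{stine2} must enter, since one has to pass from real positivity of $c\Psi-\Phi$ as a map on $A$ to real \emph{complete} positivity and then to ordered completely positive extensions to $C$; and even (5)$\Rightarrow$(1) requires a line of argument. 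A complete proof must supply all of this --- and these are precisely the places where the $D$-valued (as opposed to scalar) situation bites, which is why the paper can assert the full equivalence only when $\dim D=1$. As it stands, your write-up is a correct proof of one implication of the last sentence, conditional on the rest of the theorem.
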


At the time of writing we do not know if the conditions in the last theorem are equivalent in full generality.
That is, we do not know if we have two distinct equivalence relations in the general case.
Gleason parts are applied in \cite{BLv}  to the theory of Hankel and Fredholm Toeplitz operators. 

\begin{remark}   If $\Phi : A \to A$ is a completely contractive unital projection then $\Phi(A)$ is an operator algebra
in the $\Phi$ product and $\Phi$ is a `$\Phi(A)$-bimodule map' with respect to that operator algebra, by Theorem \ref{crpproj}.  
If $\Phi(A) \subset \Delta(A)$ then by the idea in the proof of Corollary \ref{ipscor} but appealing to the Choi-Effros in our 
introduction instead of to Effros-St{\o}rmer, one sees that $\Phi(A)$ is selfadjoint, a $C^*$-algebra in the $\Phi$ product. 
This is not mentioned in \cite{BLv} but many of  the results in that paper, including parts of the theorem above, will go through for such maps. 
\end{remark} 

Finally we discuss {\em noncommutative representing measures} for $\Phi : A \to D$, a $D$-character.
  A positive measure $\mu$  on  a set $K$ is called a \emph{representing measure} for a character $\Phi$  of a function algebra 
  $A$ on $K$ if
$\Phi(f)=\int_K \, f \,d\mu$ for all $f\in A$. The functional $\widetilde{\Phi}(g) = \int_K \, g \, d\mu$ on $C(K)$ is a state on $C(K)$,
and indeed representing measures for $\Phi$ are in a bijective correspondence with the extensions  of $\Phi$ 
to a positive functional on $C(K)$.  That is, representing measures for  a character are just the Hahn-Banach extensions to $C(K)$ of that character.

Noncommutative representing measures will therefore be related somewhat to the earlier theorem  \ref{stine2} concerning positive extensions.
Suppose that we are given a faithful representation of $D$ on a Hilbert space $H$.  
The usual noncommutative analogue of a `noncommutative representing measure' for say
a $D$-character on $A$ would be a $B(H)$-valued 
extension of $\Phi$ to a $C^*$-algebra $B$ containing $A$, which is 
completely positive (or equivalently, in this case, completely contractive).    Such noncommutative representing measures $\Psi : B \to B(H)$ always exist,
by Theorem  \ref{stine2} (indeed by Arveson's extension theorem \cite[Theorem 1.2.9]{Arv}).  However although these 
noncommutative notions are appropriate in  many settings, 
they do not necessarily seem appropriate when generalizing some other important parts of the theory
of uniform algebras.   An intuitive reason we advance for now for this (other reasons will become clearer momentarily) is that 
$B(H)$ is too big, thus insensitive; in some settings one probably would not want to go too far from $D$ in the range 
if one does not have to.   We shall see below that for some purposes  one should not have to. 
 
 The alternative {\em noncommutative representing measure} that we are proposing, again inspired by
Arveson (but this time his noncommutative analyticity work \cite{Arv2}),  is
a completely positive extension to $B$ that takes values in $D$ (or possibly a weak* closure of $D$).   
Let us call these {\em tight   noncommutative representing measures}.  Now however one has to face 
the problem of existence of such an extension.  Such existence  would in a real sense improve on  Theorem  \ref{stine2}
in the case of $D$-characters.  This problem is dealt with by exploiting the $C^*$-algebraic or von Neumann algebraic theory of 
conditional expectations from $B$ onto $D$. 

In the following discussion,   we have weak*-continuous unital inclusions $D \subset A \subset M$, where 
$M$ is a von Neumann algebra, and $A$ and $D$ are unital weak* closed subalgebras, with $D$ selfadjoint
(hence a von Neumann subalgebra).   
We are also given a weak*-continuous $D$-character $\Phi : A \to D$.    We seek a weak* continuous positive extension $\Psi : M \to D$.    
Now we can see that we are asking for something
quite interesting in several ways.   Firstly, we are asking  for a generalization of the 
remarkable and deep theory of von Neumann algebra conditional expectations summarized briefly in Section \ref{wace}.
Indeed setting $A = D$, the question above becomes precisely the important question of the 
existence of a normal (i.e.\ weak* continuous) expectation of a fixed von Neumann algebra onto a von Neumann subalgebra.  
Second, it is interesting because weak* continuous positive extensions of weak* continuous  {\em linear} unital contractive maps do not typically exist.
Indeed saying `positive' here is equivalent to saying `contractive', and even in the case that the range is one dimensional
(i.e.\ $D = \Cdb 1$) the Hahn-Banach theorem about extensions with the same norm 
usually fails drastically if all maps are supposed to be weak* continuous.   This point is discussed early in \cite{BFZ}, and we will end our paper
with an example of such failure.   It is important that $\Phi$ is a homomorphism for such a positive weak* continuous extension to exist. 
Third, this is precisely the setting of Arveson's famous paper \cite{Arv2}  on noncommutative generalizations 
of Hardy spaces.   In  Arveson's approach to noncommutative analyticity/generalized analytic functions/Hardy spaces
we have a normal  conditional expectation $\Psi : M \to D$ extending a $D$-character $\Phi$ on $A$.   In this
`generalized analytic function theory' it is very important that the representing measures are $D$-valued rather than
$B(H)$-valued.

In the classical case if $\mu$ is a representing (probability) 
measure on a space $K$ for a character $\theta$ of a function algebra $A$ on $K$, we define $H^\infty(\mu)$ to 
be the weak* closure of $A$ in $L^\infty(\mu)$.  Similarly for $p< \infty$ define
$H^p(\mu)$ to be the  closure of $A$ in $L^p(\mu)$.  E.g.\  if $A$ is the disk algebra or $H^\infty$ of the disk, then 
$\theta(f) = f(0)$, and the important `representing measure' is $\mu(f) = \int_{\Tdb} \, f \, dm$, Lebesgue integration on the circle, which is 
a state on $C(\Tdb)$ and a weak* continuous state on $L^\infty(\Tdb)$.  If $A$ is a  
{\em Dirichlet} or {\em logmodular algebra}, and indeed much more generally,  these Hardy spaces behave very similarly to 
the classical Hardy spaces  of the disk.   One obtains an  F \& M Riesz theorem, 
Beurling's theorem, Jensen and Szego theorems, Gleason-Whitney theorem, inner-outer factorization, and so on.
Arveson was attempting a vast noncommutative generalization of all of this, using
 precisely the noncommutative representing measure of a $D$-character approach that we are describing.
 
Arveson gave many interesting examples, showing that his framework synthesized several theories that were emerging in the
1960's.  Work on Arveson's spaces has continued over the decades by very many authors
(see e.g.\ \cite{BLsurv} for many references), being at  present something of an international industry. His vision was
 realized in the case that $A + A^*$ is weak* dense in $M$ (again, see e.g.\ \cite{BLsurv}).   We are 
trying to push this same noncommutative representing measure approach to operator algebras beyond the latter case. 

Returning to tight noncommutative representing measures for a weak* continuous
$D$-character $\Phi : A \to D$, the primary problem concerns their existence, which turns out to hold for quite subtle reasons.
We seek a weak* continuous positive extension $\Psi : M \to D$ of $\Phi$, where 
$M$ is a fixed von Neumann algebra containing $A$ unitally, and contains $D$ as a von Neumann subalgebra.  
In \cite{BFZ} this is done if $D$ is atomic (and it is explained there why this is a noncommutative generalization of an old
 theorem of Hoffman and Rossi).  

\begin{theorem}  \label{BLvvf} {\rm \cite{BLv2}}  Consider weak*-continuous unital inclusions $D \subset A \subset M$, where 
$M$ is a von Neumann algebra which is  commutative, or which possesses
a faithful normal tracial state, and $A$ and $D$ are unital weak* closed subalgebras, with $D$ selfadjoint.   
  If  $\Phi : A \to D$ is a weak*-continuous $D$-character then $\Phi$ has a 
weak* continuous positive extension $\Psi : M \to D$.
\end{theorem}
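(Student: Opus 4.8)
The plan is to reduce the statement, via the Radon--Nikod\'ym-type correspondence between normal positive projections and densities, to the construction of a single positive density in $M$, and then to produce that density by a von Neumann algebraic refinement of the classical Hoffman--Rossi argument. After unitizing as in the discussion preceding the statement, fix a faithful normal tracial state $\tau$ on $M$ (in the commutative case, integration against a faithful normal probability measure), put $\omega = \tau_{|D}$, and form the normal state $\phi = \omega \circ \Phi$ on $A$. Since $\Phi$ is a homomorphism which is the identity on $D$ and $\omega$ is tracial, a short computation gives that $\phi$ is tracial, that $\phi_{|D} = \omega$, and that $\phi$ is invariant under conjugation by the unitary group $U(D)$ of $D$ (using $u \Phi(a) u^* = \Phi(uau^*)$ and $\omega(u \Phi(a) u^*) = \omega(\Phi(a))$). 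Write $A_0 = \ker \Phi$, a weak* closed ideal of $A$ with $A = D \oplus A_0$. What I need is a positive $h \in L^1(M,\tau)$ which commutes with $D$, satisfies $E_\tau(h) = 1$ for the $\tau$-preserving conditional expectation $E_\tau : M \to D$ (which exists by \cite[Proposition V.2.36]{Tak} since $\tau$ restricts to a faithful normal tracial state on $D$), and has $\tau(hb) = 0$ for all $b \in A_0$. Given such an $h$, one checks that $\Psi(x) := E_\tau(h^{1/2} x h^{1/2})$ is positive, unital (hence contractive and $D$-valued), and normal, the last because $\| h^{1/2}(x - x_i) h^{1/2} \|_1 = \tau((x-x_i)h) \to 0$ along an increasing net $x_i \uparrow x$; and $\Psi_{|A} = \Phi$, since for $a \in A$ and $d \in D$ one has $\tau(\Psi(a)d) = \tau(h^{1/2} a h^{1/2} d) = \tau((ad)h) = \phi(ad) = \omega(\Phi(a)d) = \tau(\Phi(a)d)$, using that $h^{1/2}$ commutes with $d$, so $\Psi(a) = \Phi(a)$ by faithfulness of $\tau_{|D}$.

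The density $h$ is built in two moves. First one must produce \emph{some} normal state $\rho_0$ on $M$ extending $\phi$; equivalently, a positive $h_0 \in L^1(M,\tau)$ with $E_\tau(h_0) = 1$ and $\tau(h_0 b) = 0$ for $b \in A_0$. In the commutative case $M = L^\infty(X,\mu)$ this is obtained by disintegrating $\mu$ over the sub-$\sigma$-algebra corresponding to the von Neumann subalgebra $D$ (as in Section~\ref{wace}), applying the classical Hoffman--Rossi theorem to the resulting weak* continuous scalar characters on the fibre algebras, and recombining the fibre densities via a measurable selection. In the general finite case one takes a Hahn--Banach state extension $\tilde\phi$ of $\phi$ to $M$, splits it into normal and singular parts $\tilde\phi = \tilde\phi_n + \tilde\phi_s$, and argues -- and this is where the hypothesis that $\Phi$ is multiplicative (equivalently, that $A_0$ is an ideal) is essential -- that $\tilde\phi_s$ annihilates $A$; then $\rho_0 := \tilde\phi_n$ is a normal state extending $\phi$, with density $h_0$. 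Second, one upgrades $\rho_0$ to a $D$-central normal extension: the orbit $\{u^* h_0 u : u \in U(D)\}$ in $L^1(M,\tau)$ consists of elements equidistributed with $h_0$, hence uniformly integrable, hence (Dunford--Pettis) relatively weakly compact, so its norm-closed convex hull $K$ is a weakly compact convex set on which $U(D)$ acts by affine isometries; every $k \in K$ still satisfies $k \geq 0$, $E_\tau(k) = 1$, and $\tau(kb) = 0$ for $b \in A_0$, the last because $\tau(u^* h_0 u\, b) = \tau(h_0\, ubu^*) = 0$ as $ubu^* \in A_0$. By the Ryll--Nardzewski fixed point theorem $K$ contains a $U(D)$-fixed point $h$, which is then the required $D$-commuting density. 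In the commutative case the second move is vacuous, since everything already commutes with $D$.

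I expect the \emph{first} move in the general finite case -- showing that a Hahn--Banach extension of $\phi$ has no singular part seen by $A$, i.e.\ the noncommutative Hoffman--Rossi step -- to be the main obstacle, since this is precisely where the algebra structure (multiplicativity of $\Phi$, $A_0$ an ideal) must be converted into analytic information about singular functionals on $M$, and it is the step that genuinely uses ``$M$ finite'' rather than merely ``$M$ a von Neumann algebra'' (the statement already fails for general $M$ when $D = \Cdb 1$, by the Hahn--Banach-style failure discussed around \cite{BFZ}). The remaining ingredients -- the Radon--Nikod\'ym reduction, the verification that $\Psi$ is normal and extends $\Phi$, and the Ryll--Nardzewski averaging -- are comparatively routine, as is the reduction of an arbitrary $M$ to the case $M = W^*(A)$ by post-composing a solution $\Psi_0 : W^*(A) \to D$ with the $\tau$-preserving conditional expectation of $M$ onto the von Neumann subalgebra $W^*(A)$.
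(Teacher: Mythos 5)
The paper does not actually prove Theorem \ref{BLvvf}: it is quoted from the work in preparation \cite{BLv2}, and the surrounding text only says that the proof draws on the noncommutative Hoffman--Rossi theorem of \cite{BFZ}, the machinery of noncommutative integration, and ``new ideas''. So your proposal can only be judged on its own terms. Its architecture is sensible and consistent with what the paper hints at: reduce to producing a positive density $h\in L^1(M,\tau)$ commuting with $D$, with $E_\tau(h)=1$ and $\tau(hA_0)=0$. Your verification that $\Psi=E_\tau(h^{1/2}\,\cdot\,h^{1/2})$ is then a normal positive $D$-valued extension of $\Phi$ is essentially correct, as is the $U(D)$-averaging (Ryll--Nardzewski on the weakly compact closed convex hull of the orbit) that upgrades a normal extension of $\phi=\tau|_D\circ\Phi$ to a $D$-central one, and the reduction from a general $M$ to $W^*(A)$.

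However, the step you yourself flag as ``the main obstacle'' is not one you have overcome --- it is the entire content of the theorem --- and the mechanism you propose for it cannot be the right statement. You need a normal state on $M$ extending $\phi$, and you propose to get it by showing that the singular part $\tilde\phi_s$ of a Hahn--Banach state extension $\tilde\phi$ annihilates $A$. Since $1\in A$ and $\tilde\phi_s\geq 0$ in Takesaki's decomposition, $\tilde\phi_s|_A=0$ forces $\tilde\phi_s(1)=\|\tilde\phi_s\|=0$; so your claim amounts to saying that \emph{every} state extension of $\phi$ is automatically normal. That is false already in degenerate cases (take $A=D=\Cdb 1$: every state of $M$, singular ones included, extends $\phi$), and what must actually be proved is that \emph{some} normal extension exists --- which is precisely the noncommutative Hoffman--Rossi theorem. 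Even classically this is the step where multiplicativity is converted into analytic information (via exponentials of elements of $A$ and a Jensen-type inequality), and the example closing Section \ref{BLvv} shows the conclusion genuinely fails for weak*-continuous unital contractions that are not homomorphisms; your proposal never performs this conversion. The commutative branch is similarly incomplete: disintegration over $D$ plus a ``measurable selection'' of fibrewise representing measures is a program, not an argument, and needs hypotheses (standardness of the measure space, measurability of the selection) you have not addressed. Until the existence of a single normal state extension of $\phi$ is actually established, the proof has not begun.
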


The main point is that this suggests that at least for some purposes one should not need, and probably should not use, general $B(H)$-valued extensions
of $D$-characters.   One in fact has the (surprising) existence of tight noncommutative representing measures.
Indeed in the above theorem we have this existence for any von Neumann subalgebra $D$.  

We also have a much more general theorem giving
%General 
existence of  weak*-continuous representing measures \cite{BLv2}.    It is similar to the last result: there exist
  weak* continuous positive extensions $\Psi : M \to D$ 
of weak*-continuous $D$-characters on weak* closed unital subalgebras of $M$
 for much more general classes of von Neumann algebras $M$ (without a faithful normal tracial state).
These results require extra conditions
e.g.\ on modular automorphism groups in the same spirit as the second last paragraph of Section {\rm \ref{wace}}.

That is, there exists a much more general, but considerably more technical, version 
of Theorem \ref{BLvvf}.   Those familiar with the conditions from Tomita-Takesaki theory ensuring the existence of  von Neumann algebraic 
conditional expectations (see e.g.\ \cite[Theorem 4.2]{Tak2})  will be able to guess what the vague conditions are.     Those not versed in modular theory would not 
be thankful 
for an  explicit statement of these conditions!   
Basically we are saying that `noncommutative representing measures' exist in this setting, under basically the same conditions that von Neumann algebra conditional expectations exist.   The proof uses, in additional to the 
arsenal of noncommutative integration theory alluded to above, techniques from \cite{Lab, LabG,Xu} and elsewhere, as well as new ideas.

\begin{example} 
We end our paper with an example showing the necessity of using 
$D$-characters $\Phi : A \to D$ in the results above, even in the scalar valued case
($D = \Cdb$).
The suspicious reader might think that possibly the issue is that $A$ 
needs to be an algebra, rather than that $\Phi$ needs to be a homomorphism.
Take any subspace ${\mathcal S}$
of $L^\infty([0,1]$ possessing a 
norm $1$ functional $\varphi_1$ with no weak* continuous
 Hahn Banach (state)
extension to $N = L^\infty([0,1])$.  For example, 
the polynomials of degree $\leq 1$ with $\varphi_1(p) = p(1)$ will do.   
 In \cite[Proposition 2.6]{BFZ} we considered the 4 dimensional subalgebra
$A$ of $M_2(L^\infty([0,1])$ consisting of upper triangular $2 \times 2$ matrices
with constant functions on the diagonal, and an element from  ${\mathcal S}$ 
in the $1$-$2$ position.    We will construct a
weak* continuous state (hence a completely contractive $D$-module map
onto $D$) on $A$  with
 no weak* continuous Hahn-Banach extension to $M_2(N)$.

To do this let $s \in (0,1)$.  We will use the fact that the functional on the upper triangular $2 \times 2$ matrices which takes $E_{11} \mapsto s, E_{22} \mapsto 1-s, E_{12} \mapsto \mu$, for $s \in [0,1]$, is contractive (and hence is a state) if and only if $|\mu|^2 \leq s(1-s)$.  To see this note it is contractive
if and only if it is a state.   States on
the upper triangular $2 \times 2$ matrices are easily seen to have
 unique state extensions
to $M_2$.   Indeed there is a bijectiion between these two state spaces.
States on $M_2$ correspond to density matrices, that is positive matrices of trace $1$.
These are selfadjoint matrices with $s, 1-s$ on the diagonal for $s \in [0,1]$, and off diagonal entries coming
from a scalar $\mu$ with $|\mu|^2 \leq s(1-s)$.

We may scale: let  $\psi = \sqrt{s(1-s)} \, \varphi_1$.   Then the functional
$\varphi$ on $A$ defined by  
$$\varphi \bigl( \left[ \begin{array}{ccl} a & x \\ 0 & c \end{array} \right] 
\bigr) =  \;
sa + (1-s) c + \psi(x) , \quad a, c \in \Cdb, x \in X ,$$
is a weak* continuous  state on $A$.  This uses the fact
(clear from the formula (2.1) in 
\cite{BM05}) that if $z$ is the first matrix in the last displayed equation, then $z$
has the same norm as the same matrix but with $x$ replaced by $\| x \|$, and $a$ and $c$ replaced by their
modulus.   Since $\| \psi \| = \sqrt{s(1-s)}$,  we have by the
last paragraph that
$$s|a| + (1-s)|c| + | \psi(x)|
= s|a| + (1-s)|c| + \| \psi \| \| x \|  \leq
\| \left[ \begin{array}{ccl} |a| & \| x \| \\ 0 & |c| \end{array} \right]  \| = \| z \|.$$
Thus $|\varphi(z) | \leq \| z \|$.
We claim that $\varphi$ has no weak* continuous Hahn-Banach extension to $M_2(N)$.
Indeed if there were, then we obtain a  weak* continuous Hahn-Banach extension to 
$B$, the set of upper triangular $2 \times 2$ matrices with  constant functions on the diagonal, and an element from
 $N$ in the $1$-$2$ position.
Thus we obtain a  weak* continuous extension $\xi$ of $\psi$ to 
$N$, such that  the functional
$$\left[ \begin{array}{ccl} a & x \\ 0 & c \end{array} \right] \; \mapsto \;
sa + (1-s) c + \xi(x) , \quad a, c \in \Cdb, x \in N ,$$
is a weak* continuous  state on $B$. If $x \in N$ with $\| x \| = 1$ then for any scalars $a, b, c$ we have
$$|sa + (1-s)c + b \xi(x)| \leq \| \left[ \begin{array}{ccl} a & b  x  \\ 0 & c \end{array} \right]  \|
= \| \left[ \begin{array}{ccl} a & \| b  x \| \\ 0 & c \end{array} \right]  \| =  \| \left[ \begin{array}{ccl} a & b  \\ 0 & c \end{array} \right]  \|.$$   We have used again 
formula (2.1) in
\cite{BM05}, twice. 
Thus by the last paragraph $|\xi(x)| \leq \sqrt{s(1-s)}$, so that
$\| \xi \| \leq \sqrt{s(1-s)}$.
  This contradicts the fact that $\psi$ has no weak* continuous
Hahn Banach extension to $N$.
\end{example}

\medskip

\subsection*{Acknowledgment}
 We thank Louis Labuschagne and Matt Neal for very many discussions.   We also thank Angel 
Rodr\'iguez Palacios for many very useful comments and references.   Some of our results on contractive projections, and some complementary results,
 he has subsequently 
extended in unpublished work \cite{RodAz} to the class called  {\em Arazy algebras} and introduced in \cite{CR18}; in particular to
unital complete normed power-associative complex algebra satisfying von Neumann's inequality (we will not define all these terms). 
Some related theory of nonunital  nonassociative algebras may be found in the last section of \cite{CRM}.
 While writing this article we learned of the death of Ed Effros, many of whose 
beautiful and important ideas are featured here, and were struck again by his profound contributions to the subject.   Similarly, 
we often fondly remembered Coenraad Labuschagne during this writing
  for his warm and kind personality, and for his fine work on the subject of conditional expectations.

\end{document}